\theoremstyle{definition}
\newtheorem{thm}{Theorem}[section]
\newtheorem{lem}[thm]{Lemma}
\newtheorem{prop}[thm]{Proposition}
\newtheorem*{rmk}{Remark}
\newcommand{\R}{\mathbb{R}}  
\newcommand{\Z}{\mathbb{Z}}
\newcommand{\X}{\mathbb{X}}
\newcommand{\TP}{\overline{\partial}{}}
\newcommand{\TL}{\overline{\Delta}{}}
\newcommand{\tpl}{\overline{\partial}^2\overline{\Delta}}
\newcommand{\curl}{\text{curl }}
\newcommand{\dive}{\text{div }}
\newcommand{\q}{\quad}
\newcommand{\p}{\partial}
\newcommand{\dd}{\mathfrak{D}}
\newcommand{\DD}{\mathcal{D}}
\newcommand{\den}{{\bar\rho}_{0}}
\newcommand{\nab}{\nabla}
\newcommand{\ak}{\tilde{a}}
\newcommand{\ek}{\tilde{\eta}}
\newcommand{\ar}{\mathring{a}}
\newcommand{\ark}{\mathring{\tilde{{a}}}}
\newcommand{\Jrk}{\mathring{\tilde{{J}}}}
\newcommand{\er}{\mathring{\eta}}
\newcommand{\hr}{\mathring{h}}
\newcommand{\erk}{\mathring{\tilde{{\eta}}}}
\newcommand{\vr}{\mathring{v}}
\newcommand{\Jr}{\mathring{J}}
\newcommand{\psir}{\mathring{\psi}}
\newcommand{\park}{\nabla_{\mathring{\tilde{a}}}}
\newcommand{\divr}{\text{div}_{\mathring{\tilde{a}}}}
\newcommand{\pak}{\nabla_{\tilde{a}}}
\newcommand{\diva}{\text{div}_{\tilde{a}}}
\newcommand{\curla}{\text{curl}_{\tilde{a}}}
\newcommand{\lkk}{\Lambda_{\kk}}
\newcommand{\lap}{\Delta}
\newcommand{\di}{\text{div}\,}
\newcommand{\cp}{\overline{\partial}{}}
\newcommand{\dy}{\,dy}
\newcommand{\dz}{\,dz}
\newcommand{\dt}{\,dt}
\newcommand{\dS}{\,dS}
\newcommand{\kk}{\kappa}
\newcommand{\EE}{\mathcal{E}}
\newcommand{\VV}{\mathbf{V}}
\newcommand{\HH}{\mathbf{H}}
\newcommand{\GG}{\mathbf{G}}
\newcommand{\FF}{\mathbf{F}}
\newcommand{\VVr}{\mathring{\mathbf{V}}}
\newcommand{\HHr}{\mathring{\mathbf{H}}}
\newcommand{\PP}{\mathcal{P}}
\newcommand{\io}{\int_{\Omega}}
\newcommand{\ig}{\int_{\Gamma}}
\numberwithin{equation}{section}
\newcommand{\hc}{\mathcal{H}}
\newcommand{\dH}{\dot{H}}
\begin{document}
\bibliographystyle{plain}
\title{\bf Local Well-posedness for the Motion of a Compressible Gravity Water Wave with Vorticity}
\date{}
\author{Chenyun Luo\thanks{Department of Mathematics, The Chinese University of Hong Kong, Shatin, NT, Hong Kong.
Email: \texttt{cluo@math.cuhk.edu.hk}}\,\, and Junyan Zhang \thanks{Department of Mathematics, Johns Hopkins University, Baltimore, MD, USA.
Email: \texttt{zhang.junyan@jhu.edu}} }
\maketitle
\begin{abstract}
In this paper we prove the local well-posedness (LWP) for the 3D compressible Euler equations describing the motion of a liquid in an unbounded initial domain with moving boundary. The liquid is under the influence of gravity but without surface tension, and it is not assumed to be irrotational. We apply the tangential smoothing method introduced in Coutand-Shkoller \cite{coutand2007LWP, coutand2010LWP} to construct the approximation system with energy estimates uniform in the smooth parameter. It should be emphasized that, when doing the nonlinear a priori estimates, we need neither the higher order wave equation of the pressure and delicate elliptic estimates, nor the higher regularity on the flow-map or initial vorticity. Instead, we adapt the Alinhac's good unknowns to the estimates of full spatial derivatives. 
\end{abstract}


\setcounter{tocdepth}{1}
\tableofcontents

\section{Introduction}

In this paper we study the motion of a compressible gravity water wave in $\mathbb{R}^3$ described by the compressible Euler equations:
\begin{equation}\label{wweq}
\begin{cases}
\rho D_tu = -\nab p - \rho ge_3,&~~~ \text{in}\,\DD\\
D_t\rho+\rho\dive u=0 &~~~ \text{in}\,\DD\\
p=p(\rho) &~~~ \text{in}\, \DD
\end{cases}
\end{equation} where $\DD=\bigcup_{0\leq t\leq T}\{t\}\times \DD_t$ with $\DD_t:=\{(x_1, x_2,x_3)\in\R^3: x_3\leq S(t,x_1,x_2)\}$ representing the unbounded domain occupied by the fluid at each fixed time $t$, whose boundary $\p\DD_t=\{(x_1,x_2, x_3):x_3=S(t,x_1, x_2)\}$ moves with the velocity of the fluid. $\nabla:=(\p_{x_1},\p_{x_2},\p_{x_3})$ is the standard spatial derivative and $\dive X:=\nabla\cdot X$ is the divergence for any vector field $X$ in $\DD_t$. $D_t:=\p_t+u\cdot\nabla$ denotes the material derivative. In \eqref{wweq}, $u,\rho,p$ represent the fluid velocity, density and pressure, respectively, and $g>0$ is the gravity constant. The third equation of \eqref{wweq} is known to be the equation of states which satisfies 
\begin{align}
p'(\rho)> 0, \q \text{for}\,\, \rho\geq \bar{\rho}_0,
\label{EoS}
\end{align}
where $\bar{\rho}_0:=\rho |_{\p\DD}$ is a positive constant (we set $\den=1$ for simplicity), which is in the case of an isentropic liquid. The equation of states is required in order to close the system of compressible Euler equations.

The initial and boundary conditions of the system \eqref{wweq} are
\begin{align}
\DD_0=\{x:(0,x)\in \DD\},\q\text{and}\,\,
u=u_0,\rho=\rho_0~~~ \text{on}\,\{0\}\times\DD_0,\label{IC}\\
D_t|_{\p\DD}\in T(\p\DD)\q\text{and}\,\,
p|_{\p\DD}=0, \label{BC}
\end{align}
where $T (\p\DD)$ stands for the tangent bundle of $\p\DD$. The first condition in \eqref{BC} means the boundary moves with the velocity of the fluid, and the second one shows that outside the fluid region $\DD_t$ is the vacuum and the surface tension is neglected. 

\paragraph*{Energy conservation law} The system of equations \eqref{wweq}-\eqref{BC} admits a conserved energy. Let
\begin{align}
E_0(t)= \frac{1}{2}\int_{\DD_t} \rho |u|^2\,dx+\int_{\DD_t} \rho Q(\rho)\,dx+\int_{\DD_t\cap\{x_3>0\}} gx_3\,dx - \int_{\DD_t^c\cap\{x_3<0\}} gx_3 \,dx+\int_{\DD_t} g(\rho-1)x_3\,dx
\end{align}
where $Q(\rho)= \int_1^\rho p(\lambda)\lambda^{-2}\,d\lambda$ and $\DD_t^c$ denotes the complement of $\DD_t$.  A direct computation (cf. \cite[Section 1.1]{luo2018ww}) yields that $\frac{d}{dt}E_{0}(t)=0$. 


\paragraph*{Enthalpy formulation and Rayleigh-Taylor sign condition}
We introduce the new variable $h=h(\rho):=\int_1^\rho p'(\lambda)\lambda^{-1}\,d\lambda$, which is known to be the enthalpy of the fluid. It can be seen that $h'(\rho)> 0$ and $h|_{\p\DD}=0$ thanks to \eqref{EoS}. Since $\rho$ can then be thought as a function of $h$, we define $e(h):=\log \rho(h)$. Under these new variables, \eqref{wweq} and \eqref{IC}-\eqref{BC} becomes
\begin{align}
\begin{cases}
D_t u = -\nab h -ge_3,&~~~ \text{in}\,\DD,\\
\di u = -D_t e(h),&~~~ \text{in}\,\DD,\\
\DD_0=\{x:(0,x)\in \DD\},\\
u=u_0,h=h_0 &~~~ \text{on}\,\{0\}\times\DD_0,\\
D_t|_{\p\DD}\in T(\p\DD)\q\text{and}\,\,
h=0 &~~~\text{on}\,\p\DD. 
\end{cases} \label{ww}
\end{align}
The system \eqref{ww} looks exactly like the incompressible Euler equations, where $h$ takes the position of $p$ but $\di u$ is no longer $0$ and determined as a function of $\rho$ (and hence $h$). In addition, in Ebin \cite{ebin1987equations}, the free-boundary problem \eqref{ww} is known to be ill-posed unless the physical sign condition (also known as the Rayleigh-Taylor sign condition)
\begin{align}
-\nab_\mathcal{N} h \geq c_0>0, \q\text{on}\,\,\p\DD_t.
\label{Taylor}
\end{align}
holds. Here, $\mathcal{N}$ is the outward unit normal of $\p\DD_t$ and $\nab_\mathcal{N}:=\mathcal{N}\cdot \nab$. The condition \eqref{Taylor} is a natural physical condition which says that the enthalpy and hence the pressure and density is larger in the interior than on the boundary. We remark here that \eqref{Taylor} can be derived by the strong maximum principle if the water wave is assumed to be irrotational \cite{wu1997LWPww, wu1999LWPww, luo2018ww}, and the existence of the positive constant $c_0$ is a consequence of the presence of the gravity. Otherwise, we have merely that $ -\nab_\mathcal{N} h >0$, which is insufficient to close the a priori energy estimate for \eqref{ww}. 

\paragraph*{Equation of state for an isentropic liquid}
We would like to impose the following natural conditions on $e(h)$: For each fixed $k\geq 1$, there exists a constant $C>1$ such that
\begin{align}
C^{-1}\leq |e^{(k)}(h)|\leq C. \label{e(h) cond}
\end{align}
In fact, \eqref{e(h) cond} holds true if the equation of states is given by 
\begin{align}
p(\rho)= \gamma^{-1} (\rho^\gamma-1),\q \gamma\geq 1.
\end{align}
 In particular, when $\gamma=1$, a direct computation yields that $e(h)=h$.

\paragraph*{Compatibility conditions on initial data}
Finally, in order for the initial boundary value problem \eqref{ww}-\eqref{Taylor} to be solvable the initial data has to satisfy certain compatibility conditions at the boundary.  In particular $h$ verifies a wave equation by taking divergence to the first equation of \eqref{ww}:
\begin{align} \label{h wave pre}
\begin{aligned}
D_t^2 e(h) -\lap h = (\nab_\mu u^\nu)(\nab_\nu u^\mu),\quad &\text{in}\,\,\,\DD,\\
h=0,\quad&\text{on}\,\,\,\p\DD,\\
h|_{t=0} = h_0,\quad D_th|_{t=0}=h_1,\quad&\text{in } \{t=0\}\times \DD_0,
\end{aligned}
\end{align}
where $h_1$ can be determined in terms of $u_0$ and $h_0$ via the second equation of \eqref{ww}, i.e., $-e'(h_0)h_1 = \di u_0$. In above and throughout,  the summation convention is used for repeated upper and lower indices,  we adopt the convention that \textit{the Greek indices range over $1,2,3$, and the Latin indices range over $1$ and $2$.} The compatibility conditions must be satisfied in order for \eqref{h wave pre} to have a sufficiently regular solution. 

Since $h|_{\p\DD}=0$ and $D_t\in T(\p\DD)$, the second equation of \eqref{ww} implies that $\di u|_{\p\DD}=0$. We must therefore have $h_0|_{\p\DD_0}=0$ and $\di u_0|_{\p\DD_0}=0$, which is the zero-th compatibility condition. In general, for each $k\geq 0$, the $k$-th order compatibility condition reads
\begin{align}
D_t^j h|_{\{0\}\times\p\DD_0}=0,\q j=0,1,\cdots, k. 
\label{c'cond}
\end{align}
In \cite[Sect. 7]{luo2018ww}, we have proved that for each fixed $k\geq 0$, there exists initial data verifying the compatibility condition up to order $k$ such that the initial energy norm is bounded.  

\subsection{History and Background}

The study of the motion of a fluid has a long history in mathematics, and the study of the free-boundary problems has blossomed over the past two decades or so. However, much of this activity has focus on incompressible fluid models, i.e., the velocity vector field satisfies $\di u=0$ and the density $\rho$ is fixed to be a constant. Also, the pressure $p$ is not determined by the equation of states. Rather, it is a Lagrange multiplier enforcing the divergence free constraint. It is worth mentioning here that when the fluid domain is unbounded and the velocity $u_0$ is irrotational (i.e., $\curl u_0=0$, a condition that preserved by the evolution), this problem is called the (incompressible) water wave problem, which has received a great deal of attention. The local well-posedness (LWP) for the free-boundary incompressible Euler equations in either bounded or unbounded domains have been studied in \cite{alazard2014cauchy,masmoudi05, BMSW17, christodoulou2000motion, coutand2007LWP, coutand2010LWP, DKT, ifrim1, lannes2005ww, lindblad2002, lindblad2005well, lindblad2009priori, mz2009, nalimov1974cauchy, shatah2008geometry, shatah2008priori, shatah2011local, wzzz2015, wang2015good, wu1997LWPww, wu1999LWPww, yosihara82, zhangzhang08Euler}. In addition, the long time well-posedness for the  water wave problem with small initial data is available in \cite{alazard2015global, GMSgwp12, ifrim4, ifrim2, ionescu2015, wangxcww2016, wu2009GWPww, wu2011GWPww,  zhengww2019}, and there are recent results concerning the life-span for the water wave problem with vorticity \cite{dan2018ww, ifrim3, sqt2018}.

On the other hand, much less is known for the free-boundary compressible Euler equations, especially for the ones modeling a liquid, as opposed to a gas whose density can be zero on the moving boundary. The LWP for the free-boundary compressible gas model was obtained in \cite{coutand2010priori, coutand2012LWP, Hao2015gas, jang2009gas, jang2014gas, luozeng2014}, whereas for suitable initial data (e.g., data satisfying the compatibility condition), the LWP for the free-boundary compressible liquid model with \textit{a bounded} fluid domain is available in \cite{coutand2013LWP, disconzi2017prioriI, luo2019limit, GLL2019LWP, lindblad2003well, lindblad2005cwell, lindblad2018priori}.

When the fluid domain is unbounded, the free-boundary compressible Euler equations modeling a liquid is known to be the compressible water wave problem and little is known for this case. The only existence result is due to Trakhinin \cite{trakhiningas2009}, who proved the LWP for the compressible gravity water wave with vorticity using the Nash-Moser iteration (and thus with a loss of regularity). Recently, Luo \cite{luo2018ww} established the a priori energy estimates for the compressible gravity water wave with vorticity and proved the incompressible limit by adapting the the approach used in Lindblad-Luo \cite{lindblad2018priori} to an unbounded domain.

The goal of this paper is to prove the LWP for the motion of a compressible gravity water wave without the use of Nash-Moser iteration. The main idea is to approximate the nonlinear compressible water wave problem in the Lagrangian coordinates using a sequence of ``tangentially smoothed'' problems, whose solutions converge to that of the original problem when the smoothing coefficient goes to $0$. This in the incompressible free-boundary Euler equations goes back to Coutand-Shkoller \cite{coutand2007LWP}. Also, for its application in the compressible free-boundary Euler equations modeling a liquid in a bounded domain, Coutand-Hole-Shkoller \cite{coutand2013LWP} obtained the LWP for the case with surface tension and Ginsberg-Lindblad-Luo \cite{GLL2019LWP} obtained the LWP for the self-gravitating liquid. However, here we use a different set of approximate problems by adapting what appears in \cite{GLL2019LWP} which yields a simpler construction of the sequence of approximate solutions. This will be discussed in Sect. \ref{section 1.3}.

\subsubsection{Difference between a liquid and a gas}
This manuscript concerns the compressible Euler equations modeling the motion of \textit{a liquid}, which is treated very differently from \textit{a gas}, as what is studied in \cite{jang2009gas, jang2014gas} and \cite{luozeng2014}. 
The fundamental difference is that the energy for the compressible gas model is weighted by the sound speed $c_s(\rho):=\sqrt{p'(\rho)}$ which vanishes at the physical vacuum boundary (since $\rho$ vanishes there), and thus the estimates on the moving boundary are greatly simplified. In the case of a compressible liquid, however, we have to exploit the structure of the equations carefully in order to control the top order terms on the boundary even at the a priori estimate level. Also, it appears that the wave equation verified by the enthalpy $h$ plays a crucial role in the construction of a solution. We refer to Section \ref{section 1.3} for the detailed explanations.

\subsection{The Lagrangian coordinates}\label{lagrangian}
We introduce the Lagrangian coordinates, under which the moving domain becomes fixed. Let $\Omega:=\R^2\times(-\infty,0)$ to be the lower half space of $\R^3$. Denoting coordinates on $\Omega$ by $y=(y_1,y_2,y_3)$, we define $\eta:[0,T]\times\Omega\to \DD$ to be the flow map of $u$, i.e., 
\begin{align}
\p_t\eta(t,y) = u(t, \eta(t,y)),\q \eta(0,y)=\eta_0(y), 
\end{align}
where $\eta_0: \Omega\to\DD_0$ is a diffeomorphism, satisfying $\|\eta_0\|_{\hc}:= \|\p \eta_0\|_{L^\infty(\Omega)}+\|\p^2 \eta_0\|_{H^2(\Omega)}<\infty$ (See Notation \ref{norm} and the remark after \eqref{energy} for more details on the choice of this norm). For the sake of simplicity, we assume $\eta_0=$ Id, i.e., the initial domain is $\DD_0=\Omega=\R^2\times(-\infty,0)$. In fact, our approach is also applicable to the case for general data $\eta_0$. It is not hard to see that in $(t,y)$ coordinates $D_t$ becomes $\p_t$ and the boundary $\Gamma:=\p\Omega$ becomes fixed (i.e., $\Gamma=\R^2$). We introduce the Lagrangian velocity by $v(t,y):=u(t, \eta(t,y))$, and denote the Lagrangian enthalpy $h(t,y):=h(t,\eta(t,y))$ by a slight abuse of notations. 

Let $\p=\p_y$ be the spatial derivative in the Lagrangian coordinates. We introduce the matrix $a=(\p \eta)^{-1}$, specifically $a^{\mu\alpha}=a^{\mu}_{\alpha}:=\frac{\p y^{\mu}}{\p \eta^{\alpha}}$. This is well-defined since $\eta(t,\cdot)$ is almost an identity map whenever $t$ is sufficiently small. In terms of $v, h$ and $a$, \eqref{ww}-\eqref{Taylor} becomes
\begin{align}
\begin{cases}
\p_t v^\alpha = -\nab_a^\alpha h -ge_3,&~~~ \text{in}\,[0,T]\times\Omega,\\
\di_a v= -\p_t e(h),&~~~ \text{in}\,[0,T]\times\Omega,\\
\eta=\text{Id}, v=v_0,h=h_0 &~~~ \text{on}\,\{0\}\times\Omega,\\
\p_t|_{[0,T]\times \Gamma}\in T([0,T]\times \Gamma)\\
h=0 &~~~\text{on}\,\Gamma.
\end{cases} \label{wwl}
\end{align}
Here, the differential operator $\nab_a := (\nab_a^1, \nab_a^2, \nab_a^3)$ with $\nab_a^\alpha = a^{\mu\alpha}\p_\mu$ denotes the Eulerian (covariant) derivative and  $\di_a v =\nab_a\cdot v=a^{\mu\alpha}\p_\mu v_\alpha$  denotes the Eulerian divergence of $v$. In addition, since $\eta(0,\cdot)=\text{Id}$, we have $a(0,\cdot)=I$, where $I$ is the identity matrix, and $u_0$ and $v_0$ agree. Furthermore, let $J:=\det (\p\eta)$. Then $J$ satisfies
\begin{align}
\p_t J = Ja^{\mu\alpha} \p_\mu v_\alpha. 
\label{eq of J}
\end{align}
Finally, we assume the physical sign condition holds initially
\begin{align}
-\p_3 h_0\geq c_0>0
\label{Taylor 0}
\end{align}
and it can be shown that \eqref{Taylor 0} propagates to a later time. 

\subsection{The main result}\label{main result}
The goal of this paper is to prove the LWP of the compressible gravity water wave system in the Lagrangian coordinates. Specifically, we want to construct a solution to \eqref{wwl} with localized initial data $(v_0, h_0)$, i.e.,  $|v_0(y)|\to 0$ and $|h_0(y)|\to 0$ as $|y|\to\infty$ that satisfies the compatibility condition \eqref{c'cond} up to 4-th order as well as \eqref{Taylor 0}. The localized data is required so that the initial $L^2$-based higher order energy functional is bounded and the existence of such data can be found in \cite[Section 7]{luo2018ww}. Also, we remark here that \eqref{Taylor 0} remains hold thanks to the presence of the gravity (cf. \cite[Section 7]{luo2018ww}). 

\nota ($\hc$-norm) \label{norm} Let $f$ be a smooth function. We define
$$
\|f\|_{\hc}:= \|\p f\|_{L^\infty(\Omega)}+\|\p^2 f\|_{H^2(\Omega)}.
$$

\defn We define the higher order energy functional 
\begin{align}
\EE(t) := \|\eta(t)\|_{\hc}^2+\sum_{k=0}^4\|\p_t^{4-k}v(t)\|_{H^k(\Omega)}^2+\left(\|h(t)\|_{\hc}^2+\sum_{k=0}^3\|\p_t^{4-k}h(t)\|_{H^k(\Omega)}^2\right)+|a^{3\alpha}\TP^4\eta_{\alpha}(t)|_{L^2(\Gamma)}^2,
\label{energy}
\end{align}
where $\cp=(\p_1, \p_2)$ is the tangential Lagrangian spatial derivative.

\begin{rmk} The terms $\|\eta\|_{\hc}^2$ and $\|h\|_{\hc}^2$ may be replaced by $\|\eta\|_{H^4(\Omega)}^2$ and $\|h\|_{H^4(\Omega)}^2$, respectively, in the case when $\Omega$ is bounded. However, we have to be more careful in the case of an unbounded $\Omega$ since neither $\eta$ nor $\p \eta$ are in $L^2(\Omega)$, which is due to that $\eta_0=\text{Id}$. In addition to this, we cannot control $\p h$ in $L^2(\Omega)$ due to the presence of the gravity. Because of these, the lower-order terms $\p \eta$ and $\p h$ will be controlled in $L^\infty(\Omega)$ instead. 

\end{rmk}
\begin{thm}(Main theorem)\label{MAIN}
Suppose that the initial data $v_0, h_0$ satisfies 
\begin{itemize}
\item [A.] $\|v_0\|_{H^4(\Omega)}, \|h_0\|_{\hc} \leq M_0$,
\item  [B.] the compatibility condition \eqref{c'cond} up to 4-th order, and
\item [C.] the physical sign condition \eqref{Taylor 0}.
\end{itemize} Then there exists a $T_0>0$ and a unique solution $(\eta, v, h)$ to \eqref{wwl} on the time interval $[0, T_0]$ which satisfies
\begin{align}\label{EEEE}
\sup_{t\in[0,T_0]} \mathcal{E}(t) \leq \mathcal{C}(M_0),
\end{align}
where $\mathcal{C}(M_0)$ is a constant depends on $M_0$. Also, let $(\hat{v}_0, \hat{h}_0)$ be another set of initial data satisfying conditions A, B, C, and
$$
\|v_0-\hat{v}_0\|_{H^4(\Omega)}, \quad \|h_0-\hat{h}_0\|_{\hc} \leq \epsilon_0.
$$
Let $(\hat{\eta}, \hat{v},\hat{h})$ be the solution to \eqref{wwl} with initial data $(\hat{v}_0, \hat{h}_0)$. If
\begin{align*}
[\EE](t):=\|\eta(t)-\hat{\eta}(t)\|_2^2+\sum_{k=0}^2\|\p_t^{2-k}(v(t)-\hat{v}(t))\|_k^2+\|\p_t^{2-k}(h(t)-\hat{h}(t))\|_k^2+|a(\eta)^{3\alpha}\TP^2(\eta_{\alpha}(t)-\hat{\eta}_\alpha(t))|_0^2,
\end{align*}
 then 
 \begin{equation}
 \sup_{t\in[0,T_0]} \mathcal{[E]}(t) \leq \mathcal{C}(\epsilon_0).
 \label{EEEE2}
 \end{equation}

\end{thm}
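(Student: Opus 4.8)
The plan is to prove the main theorem in two independent stages, following the tangential-smoothing strategy of Coutand–Shkoller as adapted in the compressible setting. First I would set up the $\kappa$-regularized (``tangentially smoothed'') approximate system: replace $\eta$ by $\eta_\kappa$ built from a Friedrichs-type mollifier $\bw$ acting only in the $y_1,y_2$-directions, so that the regularized coefficient matrix $\ak=(\p\ek)^{-1}$ (with $\ek=\eta_\kappa$) is smooth in the tangential variables, and solve the linearized-in-$v$, fixed-point problem at each level $\kappa>0$ by a standard iteration (each iterate solving a linear transport–elliptic system whose well-posedness is classical). The crucial point is to derive energy estimates for this approximate system that are \emph{uniform in }$\kappa$: one differentiates the momentum equation with $\p_t^{4-k}\p^k$ and, rather than commuting $\nab_{\ak}$ past the derivative naively (which loses regularity on $\ek$), one introduces the Alinhac good unknown $\VV:=\p^4 v - (\p^4\ek)\cdot\nab_{\ak} v$ (and the corresponding $\HH$ for $h$), for which the commutator with $\nab_{\ak}$ is one order better. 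Testing the good-unknown equation against $\VV$, integrating by parts using $\di_{\ak}$, and using that $h$ vanishes on $\Gamma$ produces the boundary term controlled by the Rayleigh–Taylor sign condition \eqref{Taylor 0}, i.e. a coercive $-\p_3 h_0 \ge c_0$ contribution $|\ak^{3\alpha}\TP^4\ek_\alpha|_{L^2(\Gamma)}^2$; the interior terms, the $\kappa$-commutators from $\bw$, and the wave-equation terms for $\p_t^4 h$ are all absorbed by $\mathcal P(\EE_\kappa(t))$ via Gronwall, giving $\sup_{[0,T_0]}\EE_\kappa(t)\le\mathcal C(M_0)$ with $T_0$ independent of $\kappa$.

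Second, I would pass to the limit $\kappa\to0$. The uniform bound gives weak-* convergence of $(\eta_\kappa,v_\kappa,h_\kappa)$ in the high-norm spaces and, by Aubin–Lions/Arzelà–Ascoli applied to the time derivatives, strong convergence in a slightly weaker topology — enough to pass to the limit in every (quadratic) nonlinearity of \eqref{wwl}, since $\bw\to\id$ strongly. The limit $(\eta,v,h)$ solves \eqref{wwl}, inherits the bound \eqref{EEEE} by weak lower semicontinuity of norms, and one checks the sign condition \eqref{Taylor 0} persists on $[0,T_0]$ (shrinking $T_0$ if necessary) because $-\p_3 h$ stays close to $-\p_3 h_0$ in $L^\infty$ by the energy control. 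Initial data and compatibility conditions \eqref{c'cond} are preserved because they are closed under the convergence.

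For uniqueness and continuous dependence I would run a difference estimate directly on \eqref{wwl}: given two solutions $(\eta,v,h)$ and $(\hat\eta,\hat v,\hat h)$, set $w:=v-\hat v$, $q:=h-\hat h$, $\phi:=\eta-\hat\eta$, and derive the evolution equations for these differences. The point is that one only needs to control $[\EE](t)$, i.e. \emph{two} fewer derivatives than the existence norm $\EE$, so the coefficients $a(\eta)$, $a(\hat\eta)$ and their products appear at a regularity that is already bounded by $\mathcal C(M_0)$ from \eqref{EEEE}; the difference of the two momentum equations is linear in $(w,q,\phi)$ modulo terms of the form $(\text{bounded})\cdot(w,q,\p\phi)$. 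Energy testing with the good-unknown analogue at the $\p^2$ level, again using the Rayleigh–Taylor sign to control the boundary term $|a^{3\alpha}\TP^2\phi_\alpha|_0^2$, yields $\frac{d}{dt}[\EE]\le \mathcal C(M_0)[\EE]$, so Gronwall gives $[\EE](t)\le [\EE](0)\,e^{\mathcal C(M_0)t}\le \mathcal C(\epsilon_0)$; taking $\epsilon_0=0$ gives uniqueness.

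The main obstacle, as in all such free-boundary problems, is the boundary-regularity step inside the uniform-in-$\kappa$ estimate: the top-order tangential derivative $\TP^4\eta$ on $\Gamma$ is only controllable because the integration by parts against the good unknown $\VV$ exposes exactly the Rayleigh–Taylor coercive term, and one must verify that the $\kappa$-dependent commutator errors generated by $\bw$ (which do not quite commute with the normal-direction structure) and the errors from using $\ek$ instead of $\eta$ in $\nab_{\ak}$ are genuinely lower order — this is where the choice of good unknown and the precise form of the smoothed system from \cite{GLL2019LWP} are essential, and where one must avoid invoking the higher-order wave equation for $h$ or elliptic estimates that would cost extra regularity on $\eta$.
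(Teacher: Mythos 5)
Your overall architecture (smoothed approximate system, Alinhac good unknowns at top tangential order, Rayleigh--Taylor coercivity for the boundary energy, Gronwall, $\kappa\to 0$, difference estimate at two derivatives lower) does match the paper's strategy, but there is a genuine gap at the one place you flag as the ``main obstacle'' and then wave away: the claim that, in a bare tangentially-smoothed system with $\p_t\eta=v$ and $\ak=(\p\lkk^2\eta)^{-1}$, the $\kappa$-dependent boundary commutators are ``genuinely lower order.'' They are not. After the good-unknown integration by parts, the boundary term produces two pieces of the form
$\int_\Gamma \p_3 h\,\ak^{3\beta}\tpl\lkk\eta_\beta\,\ak^{3\gamma}\TP\lkk^2 v_\gamma\,\ak^{i\alpha}\tpl\lkk\eta_\alpha$ and $-\int_\Gamma \p_3 h\,\ak^{3\alpha}\ak^{3\beta}\tpl\ek_\beta\,\tpl\ek_\gamma\ak^{i\gamma}\TP v_\alpha$, and because $\ek=\lkk^2\eta$ is smoothed twice while $\eta$ in $\tpl\p_t\eta=\tpl v$ is not, these two terms no longer cancel when $\kappa>0$ as they do at $\kappa=0$; each is genuinely $\tfrac12$ derivative above the energy on the boundary, and the only crude bound available, $|\lkk\eta|_{\dot H^4(\Gamma)}\lesssim \kappa^{-1/2}|\eta|_{\dot H^{3.5}(\Gamma)}$, is not uniform in $\kappa$.

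The paper's resolution is to modify the first transport equation to $\p_t\eta=v+\psi$, where $\psi$ is a harmonic extension of a carefully chosen boundary trace $\TL^{-1}\mathbb P\left(\TL\eta_\beta\ak^{i\beta}\TP_i\lkk^2 v-\TL\lkk^2\eta_\beta\ak^{i\beta}\TP_i v\right)$ (with a Littlewood--Paley projection $\mathbb P$ to make $\TL^{-1}$ well-defined); this $\psi$ vanishes as $\kappa\to 0$ and is designed precisely so that $-\int_\Gamma\p_3 h\,\ak^{3\alpha}\ak^{3\beta}\tpl\ek_\beta\,\tpl\psi$ cancels both problematic boundary terms modulo genuinely controllable remainders. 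Without this correction term (or, alternatively, without boosting $\eta$ to $H^{4.5}$ via Cauchy invariance, which requires the extra vorticity regularity $\curl v_0\in H^{3.5}$ that the paper deliberately avoids), the uniform-in-$\kappa$ boundary estimate does not close. Citing ``the precise form of the smoothed system from \cite{GLL2019LWP}'' does not repair this either: GLL's system handles the analogous commutators via $\kappa$-weighted higher-order terms tied to a fifth-order wave equation for $h$, which again costs the flow-map regularity this paper is structured to avoid. So you need either to build $\psi$ into your approximate system and carry out the cancellation argument of Section \ref{bdrycancel}, or to concede the extra regularity assumption; as written, the proposal's key step fails.
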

\begin{rmk}
In the case with general initial data $\eta_0$, we require that $\|\eta_0\|_{\hc}\leq M_0$. 
\end{rmk}

\subsection{Novelty of this result, comparison with existing results, and application to other fluid models }\label{comparsion}
The result presented in this paper addresses the natural question left open in \cite{coutand2013LWP}, namely, the case of an unbounded domain. However, the method in \cite{coutand2013LWP} requires propagating an extra derivative of the flow map which requires an extra one derivative for the initial vorticity.  This is caused by differentiating the Euler equations in the Lagrangian coordinates and all derivatives fall on the cofactor matrix. We are able to avoid this by adapting the Alinhac's good unknowns, which, in turn, satisfy equations with a better structure. \textit{This is due to that these good unknowns tie to the covariant derivatives of the velocity and pressure in the Eulerian coordinates.} We refer to subsection \ref{sect. AGU} for the detailed analysis.

In addition, some lower order terms (e.g., $\p h$ and $\p \eta$) are no longer in $L^2(\Omega)$ as opposed to the case with a bounded fluid domain. As a result, our energy functional \eqref{energy}  has to be chosen carefully so that the aforementioned quantities are merely in $L^\infty$. Nevertheless, \eqref{energy} reduces to the following energy in the case of a bounded fluid domain:
$$
\|\eta(t)\|_{H^4(\Omega)}^2+\sum_{k=0}^4\left(\|\p_t^{4-k}v(t)\|_{H^k(\Omega)}^2+\|\p_t^{4-k}h(t)\|_{H^k(\Omega)}^2\right)+|a^{3\alpha}\TP^4\eta_{\alpha}(t)|_{L^2(\Gamma)}^2.
$$
In fact, our proof also works for the case of a bounded domain, producing a LWP result but \textit{without propagating the extra regularity of the vorticity and the flow map}.  Furthermore, we do not need to consider the surface tension to regularize the free surface and then take the zero surface tension limit.  

Finally, the method developed in this manuscript can be adapted to study the LWP for the non-isentropic compressible fluids, relativistic fluids, as well as the (inviscid) complex fluids with moving surface boundary. For a non-isentropic fluid,  the equation of states depends on both $\rho$ and the entropy $\mathfrak{s}$, i.e., $p=p(\rho, \mathfrak{s})$, where $\mathfrak{s}$ verifies $D_t \mathfrak{s} = 0$ in $\DD$. The LWP for the free-boundary problem in non-isentropic fluids is proved in \cite{trakhiningas2009}  with a loss of derivatives. Unlike the isentropic case, the enthalpy formulation \eqref{ww} is no longer available when $\mathfrak{s}$ is present. It is, however, possible to avoid the regularity loss by employing our method in the non-isentropic case by studying the new variable $\log (\rho/\bar{\rho}_0)$ instead of $h$.   Moreover, we have learned that Ginsberg-Lindblad \cite{dan2020} have employed a similar method to study a relativistic fluid with free-surface boundary. 
On the other hand, the complex fluids, e.g., magnetohydrodynamics (MHD) and elastodynamics, can be regarded as \textit{Euler equations under the influence of various external forces brought by other physical quantities (e.g., the elasticity, the Lorentz force in an  electromagnetic field)}. 
The presence of such external forces destroys the Cauchy invariance and thus we are \textit{unable} to propagate the extra regularity on the flow map. Nevertheless, the method developed in this manuscript has been adapted to treat the aforementioned complex fluid models by the second author \cite{LindbladZhang, Zhang1}. 

\paragraph*{List of Notations: }
\begin{itemize}
\item $\Omega:=\R^2\times(-\infty,0)$ and $\Gamma:=\R^2\times\{0\}$.
\item $\|\cdot\|_{s},\|\cdot\|_{\dot{H}^s}$:  We denote $\|f\|_{s}: = \|f(t,\cdot)\|_{H^s(\Omega)}$ and $\|f\|_{\dot{H}^s}=\|f(t,\cdot)\|_{\dot{H}^s(\Omega)}$ for any function $f(t,y)\text{ on }[0,T]\times\Omega$.
\item $|\cdot|_{s},|\cdot|_{\dot{H}^s}$:  We denote $|f|_{s}: = |f(t,\cdot)|_{H^s(\Gamma)}$ and $|f|_{\dot{H}^s}=|f(t,\cdot)|_{\dot{H}^s(\Gamma)}$ for any function $f(t,y)\text{ on }[0,T]\times\Gamma$.
\item $P(\cdots)$:  A generic polynomial in its arguments.
\item $\PP_0$:  $\PP_0=P(\|v_0\|_4,\|h_0\|_{\hc})$.
\item $[T,f]g:=T(fg)-T(f) g$, and $[T,f,g]:=T(fg)-T(f)g-fT(g)$, where $T$ denotes a differential operator or the mollifier and $f,g$ are arbitrary functions.
\item $\TP,\TL$: $\TP=\p_1,\p_2$ denotes the tangential derivative and $\TL:=\p_1^2+\p_2^2$ denotes the tangential Laplacian.
\item (Eulerian spatial derivative, divergence and curl) Let $f$ be a smooth function. Then $\nabla^{\alpha}_a f:=a^{\mu\alpha}\p_{\mu} f$, $\alpha=1,2,3$. 
Let $\mathbf{f}$ be a smooth vector field. Then $\dive_a\mathbf{f}:=a^{\mu\alpha}\p_\mu \mathbf{f}_{\alpha}$ and $(\curl_a\mathbf{f})_\lambda:=\epsilon_{\lambda\mu\alpha}a^{\nu\mu}\p_\nu \mathbf{f}^{\alpha}$, where $\epsilon_{\lambda\mu\alpha}$ is the sign of $(\lambda\mu\alpha)\in S_3.$
\end{itemize}

\paragraph*{Acknowledgment:} The authors would like to thank the referee for his/her careful reading and comments on improving this manuscript. 

\section{Strategy of the proof and some auxiliary results}

\subsection{An overview of the proof of Theorem \ref{MAIN}}\label{section 1.3}
The compressible water waves with vorticity are treated \textit{very differently from their incompressible and irrotational counterparts}, as one can completely reduce the latter to a system of quasilinear dispersive equations on the moving interface.  The strategy that we employed to prove Theorem \ref{MAIN} contains three parts: 
\begin{enumerate}
\item The a priori energy estimates in certain functional spaces. 

\item A suitable approximate problem which is asymptotically consistent with the a priori estimate. 

\item Construction of solutions to the approximate problem.
\end{enumerate}

 These steps are highly nontrivial in the case of a compressible water wave thanks to the nontrivial divergence of the velocity field and the unbounded fluid domain. The rest of this section is devoted to the elaboration of these steps. Also, we assume $e(h)=h$ in the rest of this section for the sake of simple exposition. But general $e(h)$ will be studied in the later sections.
 
 \nota  The following notations will be used throughout the rest of this manuscript. Let $f(t,y), g(t,y)$ be  smooth functions on $[0,T]\times\Omega$ and $[0,T]\times \Gamma$, respectively. Then we define $\|f\|_{s}: = \|f(t,\cdot)\|_{H^s(\Omega)}$  and $|g|_{s}: = |g(t,\cdot)|_{H^s(\Gamma)}$.  

\subsubsection{Construction of the approximate problem: Tangential smoothing}  

Although the a priori estimate has been established by the first author in \cite{luo2018ww}, it is still quite difficult to obtain a local-in-time solution by a direct iteration scheme based on the a priori bounds. The reason is that a loss of tangential derivative necessarily appears in the linearized system. Specifically, if we start the iteration with the trivial solution $(\eta^{(0)},v^{(0)},h^{(0)})=(\text{Id},0,0)$ and inductively define $(\eta^{(n+1)},v^{(n+1)},h^{(n+1)})$ by the following linearized system
\begin{equation}\label{wwlll}
\begin{cases}
\p_t\eta^{(n+1)}=v^{(n+1)}~~~&\text{ in }\Omega, \\
\p_t v^{(n+1)}=-\nabla_{{a^{(n)}}}h^{(n+1)}-ge_3~~~&\text{ in }\Omega, \\
\text{div}_{a^{(n)}}v^{(n+1)}=-\p_t h^{(n+1)}~~~&\text{ in }\Omega, \\
h^{(n+1)}=0~~~&\text{ on }\Gamma, \\
(\eta^{(n+1)},v^{(n+1)}, h^{(n+1)})|_{t=0}=(\text{Id},v_0, h_0),
\end{cases} 
\end{equation} where $a^{(n)}:=[\p\eta^{(n)}]^{-1}$, then:
\begin{enumerate}
\item We have to control $\|\p^4 v^{(n+1)}\|_0$  when constructing the solution for the linearized system, which requires the control of $\|\p^4 (\nabla_{{a^{(n)}}}h^{(n+1)})\|_0$. The elliptic estimate derived in \cite{GLL2019LWP} (i.e., Lemma \ref{GLL} with $f=h^{(n+1)}$ and $\ek$ replaced by $\eta^{(n)}$) yields
$$
\|\nab_{a^{(n)}} h^{(n+1)}\|_{\dot{H}^4} \lesssim \|\lap_{a^{(n)}} h^{(n+1)}\|_{3}+\|\cp\p \eta^{(n)}\|_{3}\|h^{(n+1)}\|_{\hc}
$$
However, the term $\|\TP\p \eta^{(n)}\|_3$ on the RHS cannot be controlled.

\item Also, we need a uniform-in-$n$ energy estimate for \eqref{wwlll} in order to get a solution for the original nonlinear problem by passing $n\to \infty$.  During this process, we pick up a boundary term that reads
\[
\ig \p_3 h^{(n+1)}\underbrace{\TP^4\eta_\beta^{(n)}}_{n\text{-th solution}} a^{(n)3\beta}a^{(n)3\alpha}\underbrace{\p_t\TP^4\eta^{(n+1)}_\alpha}_{(n+1)\text{-th solution}}\dS-\ig \p_3 h^{(n+1)}\TP^4\eta_\beta^{(n)} a^{(n)3\beta}a^{(n)3\alpha}\TP^4\eta_\gamma^{(n)}a^{(n)\mu\gamma}\p_\mu v^{(n+1)}\dS.
\]
It can be seen that the first term no longer contributes to the positive energy term $|a^{3\alpha}\TP^4\eta_{\alpha}|_{L^2(\Gamma)}^2$ as opposed to what happens to the original problem due to the loss of symmetry. In addition to this, a cancellation structure that is required to control the second term becomes unavailable as well. 
\end{enumerate}

In fact, the issues listed above appear also in the study of incompressible Euler equations \cite{coutand2007LWP,coutand2010LWP}. To overcome this difficulty, Coutand-Shkoller \cite{coutand2007LWP} introduced the tangential smoothing method: Let $\zeta=\zeta(y_1,y_2)\in C_c^{\infty}(\R^2)$ be the standard cut-off function such that $\text{Spt }\zeta=\overline{B(0,1)}\subseteq\R^2,~~0\leq\zeta\leq 1$ and $\int_{\R^2}\zeta=1$. The corresponding dilation is $$\zeta_{\kk}(y_1,y_2)=\frac{1}{\kk^2}\zeta\left(\frac{y_1}{\kk},\frac{y_2}{\kk}\right),~~\kk>0,$$ 
and we define the smoothing operator as
\begin{equation}\label{lkk0'}
\lkk f(y_1,y_2,y_3):=\int_{\R^2}\zeta_{\kk}(y_1-z_1,y_2-z_2)f(z_1,z_2,z_3)\dz_1\dz_2.
\end{equation}
Let $\ak:=[\p\ek]^{-1}$ be the smoothed version of $a$ with $\ek:=\Lambda_\kappa^2 \eta$ and define the approximate system by replacing the coefficient $a$ with $\ak$. Under this setting, we introduce the ``tangentially-smoothed" approximate system of the compressible water wave system \eqref{wwl} as follows 
\begin{align}
\begin{cases}
\p_t \eta = v+\psi &~~~\text{in } \Omega, \\
\p_t v^\alpha=-\pak^\alpha h -ge_3 &~~~\text{in } \Omega, \\
\diva v=-\p_t h &~~~\text{in } \Omega, \\
h=0 &~~~\text{on } \Gamma, \\
(\eta,v, h)|_{\{t=0\}}=(\text{Id},v_0, h_0),
\end{cases} \label{approx}
\end{align}
where $\psi=\psi(\eta,v)$ is a correction term which solves the half-space Laplace equation
\begin{equation}\label{psi'}
\begin{cases}
\Delta \psi=0  &~~~\text{in }\Omega, \\
\psi=\TL^{-1}\mathbb{P}\left(\TL\eta_{\beta}\ak^{i\beta}\TP_i\lkk^2 v-\TL\lkk^2\eta_{\beta}\ak^{i\beta}\TP_i v\right) &~~~\text{on }\Gamma,
\end{cases}
\end{equation}where $\TL:=\p_1^2+\p_2^2$ is the tangential Laplacian operator and $\TL^{-1}f:=(|\xi| ^{-2}\hat{f})^{\vee}$ is the inverse of $\TL$ on $\R^2$. The index $\beta$ ranges from 1 to 3 and $i$ ranges from 1 to 2, as stated after \eqref{h wave pre}. The notation $\mathbb{P}f:=P_{\geq 1} f$ denotes the standard Littlewood-Paley projection in $\R^2$ which removes the low-frequency part, i.e., $P_{\geq 1} f:=((1-\chi(\xi))\hat{f}(\xi))^{\vee},$ where $0\leq \chi(\xi)\leq 1$ is a $C_c^{\infty}(\R^d)$ cut-off function which is supported in $\{|\xi|\leq 2\}$ and equals to 1 in $\{|\xi|\leq 1\}$. Also, we mention here that the correction term $\psi\to 0$ as $\kk\to 0$.  
\begin{rmk} 
The Littlewood-Paley projection $\mathbb{P}$ is necessary when we apply the elliptic estimates to control $\psi$:
\[
|\psi|_{3.5}=|\TL^{-1}\mathbb{P}f|_{3.5}\lesssim |f|_{1.5},
\]otherwise the low-frequency part of $\TL^{-1}f$ loses control. 
\end{rmk}

 In Ginsberg-Lindblad-Luo \cite{GLL2019LWP}, the compressible Euler equations are approximated by a ``fully smoothed system", in the sense that all variables are replaced by their smoothed version. Specifically, they smooth the velocity vector field in the tangential direction first and then obtain the smoothed flow map by integrating it in time (see \cite{GLL2019LWP} Section 4).  In this paper, however,  we smooth the flow map directly and through this we replace the nonlinear coefficients $a^{\mu\alpha}$ by their smoothed version $\ak^{\mu\alpha}$ in \eqref{approx}. The advantage of our mollification and the correction term is three-fold.
\begin{itemize}
\item  The existence of the solution to the approximate system \eqref{approx} can be obtained by  passing to the limit as $n\to\infty$ in a sequence of approximate solutions $\{(\eta^{(n)}, v^{(n)}, h^{(n)})\}$ which are constructed by solving a linearized version of \eqref{approx} (see \eqref{approx n}). 

\item We do not need to construct the initial data for each linearized approximate system as what was done in \cite{GLL2019LWP}, because $a|_{t=0}=\ak|_{t=0}=I$ (the identity matrix) implies that the compatibility conditions of the (linearized) approximate problem are the same as the original system.

\item We are allowed to include a correction term $\psi$ in the first equation of \eqref{approx}, which was first introduced by Gu-Wang \cite{gu2016construction}. This is crucial in order to eliminate the higher order terms on the boundary when performing the tangential energy estimate, which shall be explained in Section \ref{sect psi1}-\ref{sect psi2}. 
\end{itemize}

\subsubsection{Avoiding extra regularity on the flow map: Alinhac's good unknown method} \label{sect. AGU}

The crucial part of the a priori estimates for the approximate system \eqref{approx} is the estimate for the tangential derivatives. In particular, the top order tangential energy with full spatial derivatives $\TP^4$ enters to the highest order. Due to the special structure of the correction term $\psi$ on the boundary, it is more convenient to replace $\TP^4$ by $\tpl$. The corresponding energy reads
\begin{align}
\underbrace{\|\tpl v\|_{L^2(\Omega)}^2+\|\tpl h\|_{L^2(\Omega)}^2}_{=\mathcal{E}_{TI}}+\underbrace{|\ak^{3\alpha}\tpl\lkk\eta_{\alpha}|_{L^2(\Gamma)}^2}_{=\mathcal{E}_{TB}}.
\end{align}
In the control of $\mathcal{E}_{TI}$, it is necessary to deal with the commutator between $\tpl$ and $\pak$, namely $[\tpl,\ak^{\mu\alpha}]\p_{\mu}f$ for $f=h$ or $v_\alpha$. Such commutators contain the higher order term $(\tpl a^{\mu\alpha})(\p_\mu f)$ with $\tpl \ak=\tpl\p\eta\times\p\eta+\cdots$ whose $L^2(\Omega)$-norm cannot be directly controlled. In Ginsberg-Lindblad-Luo \cite{GLL2019LWP}, such commutators are controlled by adding $\kk^2$-weighted higher order terms to the energy, which corresponds to the fifth order full spatial energy of the wave equation verified by $h$. In particular, the extra regularity of the flow map $\eta$ is necessary in \cite{GLL2019LWP} to close the energy of 5-th order wave equation of $h$. However, we can use the Alinhac's good unknowns method to get rid of the higher regularity requirement for the flow map.

\paragraph*{Motivation of Alinhac's good unknowns}
 The main idea is to rewrite $\tpl(\pak h)$ and $\tpl(\pak\cdot v)$ as
\begin{align}
\label{goodhh} \tpl(\pak h)=\pak\HH+C(h),\text{ with }\|\HH-\tpl h\|_0+\|\p_t(\HH-\tpl h)\|_0+\|C(h)\|_0\leq P(\EE(t)),\\
\label{goodvv} \tpl(\pak\cdot v)=\pak\cdot\VV+C(v),\text{ with }\|\VV-\tpl v\|_0+\|\p_t(\VV-\tpl v)\|_0+\|C(v)\|_0\leq P(\EE(t)),
\end{align}
where $P$ is a generic polynomial. 
Here $\HH,\VV$ are called the ``Alinhac's good unknowns" of $h$ and $v$, respectively. In other words, the Alinhac's good unknowns allow us to take into account the covariance under the change of coordinates to avoid the extra regularity assumption on the flow map.
\begin{rmk}
For Euler equations (including the water wave system), it is possible to have higher regularity for $\eta$ than $v$ thanks to the propagation of the extra regularity assumption on the vorticity. However, when modeling complex fluids with free boundary, such as magnetohydrodynamics (MHD) equations, MHD current-vortex sheets, and elastic fluid equations, it is not possible to have $\eta$ more regular than $v$. 
\end{rmk}

\paragraph*{Derivation of Alinhac's good unknowns}  It remains to derive the precise expressions of the good unknowns $\HH,\VV$, which is recorded in Lemma \ref{lem AGU property} in full details. Here we give a brief explanation on the precise forms of the good unknowns from the perspective of change of variables. In fact, by chain rule, we can rewrite $\tpl(\pak f)$ in terms of covariant derivatives via $\TP_i=\dfrac{\p}{\p y^i}=\dfrac{\p \ek^{\alpha}}{\p y^i}\dfrac{\p}{\p \ek^{\alpha}}=\TP_i \eta\cdot\pak$:
\[
\pak(\tpl f)=\sum_{i=1}^2\pak\left((\TP\eta\cdot\pak)(\TP\eta\cdot\pak)(\TP_i\eta\cdot\pak)(\TP_i\eta\cdot\pak)f\right)=\tpl(\pak f)+\pak(\tpl\ek\cdot\pak f)+\text{l.o.t.}
\] It is not difficult to find that, other than $\pak\tpl f$, there is another highest order term $-\pak(\tpl\ek\cdot\pak f)$ corresponding to the term that all the derivatives fall on $\ek$. Therefore, the essential highest order term in $\tpl(\pak f)$ is indeed the covariant derivative of $\tpl f-\tpl\ek\cdot\pak f$, called the ``Alinhac's good unknown" of $f$ with respect to $\tpl$.

Therefore, one has
$
\VV:=\tpl v-\tpl\ek\cdot\pak v,
$
and
$
\HH:=\tpl h-\tpl\ek\cdot\pak h
$, satisfying \eqref{goodhh}-\eqref{goodvv} and 
$$\|C(v)\|_0\lesssim P(\|\p^2\eta\|_2,\|\p\eta\|_{L^{\infty}})(\|\p v\|_{L^{\infty}}+\|\p^2 v\|_2),\quad \|C(h)\|_0\lesssim P(\|\p^2\eta\|_2,\|\p\eta\|_{L^{\infty}})(\|\p h\|_{L^{\infty}}+\|\p^2 h\|_2).$$ This circumvents a loss of regularity caused by differentiating the equation $\p_t v^\alpha=-\pak^\alpha h -ge_3$ and all derivative fall on $\ak$. Such a remarkable observation is due to Alinhac \cite{alinhacgood89}. In the study of free-surface fluid, it was first implicitly used in the $Q$-tensor energy introduced by Christodoulou-Lindblad \cite{christodoulou2000motion} which was later generalized by \cite{luo2018ww}. It has also been applied explicitly in \cite{gu2016construction, MRgood2017, wang2015good}. Recently, Ginsberg-Lindblad \cite{dan2020} have adapted these good unknowns to study the LWP for the free-boundary relativistic Euler equations in a fixed hyperbolic space-time domain.  

\paragraph*{Interior estimates via the good unknowns}Now $\VV$ and $\HH$ satisfy
\begin{align}
\p_t \VV = -\pak \HH +\text{error},\q \pak\cdot \VV= \tpl (\diva v)+\text{error},\q\text{in}\,\,\Omega \label{AGU}\\
\HH = -\tpl\ek_\beta \ak^{3\beta}\p_3 h\q\text{on}\,\,\Gamma,
\end{align}
multiplying $\VV$ through \eqref{AGU}, integrating over $\Omega$ and integrating $\pak$ in $-\int_\Omega \pak \HH \cdot \VV$ by parts yield 
\begin{align}
\frac{1}{2}\frac{d}{dt}\int_\Omega |\VV|^2=\io \HH\tpl (\diva v)\dy+\ig\p_3h\ak^{3\beta}\ak^{3\alpha}\tpl\ek_{\beta}\VV_{\alpha} dS +\text{error}.
\label{AGU control}
\end{align}
For the first term on the RHS of \eqref{AGU control}, invoking the definition of $\HH$ and the third equation of \eqref{approx}, we have
\begin{align*}
\io \HH\tpl (\diva v)\dy = -\io (\tpl h)(\tpl \p_t h)\dy + \io (\tpl \ek\cdot \nab_{\ak} h)(\tpl \p_t h)\dy.
\end{align*}
Here, the first term  is equal to $-\frac{d}{dt}\frac{1}{2}\|\tpl h\|_{L^2(\Omega)}^2$, which contributes to the positive energy term that controls $\|\tpl h\|_{L^2(\Omega)}^2$.  The second term can be controlled after considering its time integral (See \eqref{K*}-\eqref{tgK} for the details). 
In addition, invoking the definition of $\VV$, we have
$$
\|\TP^2 \TL v\|_{0}^2\leq \|\VV\|_{L^2(\Omega)}^2 + \|\TP^2\TL \ek\cdot \nab_{\ak} v\|_{0}^2,
$$ 
and this implies that it suffices to bound $\|\VV\|_{0}^2$ in order to control $\|\TP^4 v\|_{0}^2$ as the last term $\|\TP^2\TL \ek\cdot \nab_{\ak} v\|_{0}^2$ can be controlled straightforwardly. For details we refer to the proof of Lemma \ref{lem AGU controls 4 tangential}.

\subsubsection{Crucial cancellation structure on the boundary}\label{sect psi1}
The second term on the RHS of \eqref{AGU control} is equal to 
\begin{align} 
\ig \p_3 h\ak^{3\alpha}\ak^{3\beta}\tpl \ek_{\beta}(\tpl\p_t\eta_{\alpha}-\tpl\psi-\tpl\ek\cdot\pak v_{\alpha})\dS.
\label{AGU bdy}
\end{align}
By plugging the definition of $\VV$ and invoking the first equation of \eqref{approx} and then ``moving" one $\Lambda_\kk$ from $\ek_\beta$ to $\eta_\alpha$, we have
\begin{align}\label{1.26}
&~~~~\ig \p_3 h\ak^{3\alpha}\ak^{3\beta}\tpl \ek_{\beta}\left(\tpl\p_t\eta_{\alpha}-\tpl\ek\cdot\pak v_{\alpha}\right) \nonumber \\
&=\frac{1}{2}\frac{d}{dt}\int_\Gamma \p_3 h |\ak^{3\alpha}\tpl\Lambda_\kk \eta_\alpha|^2\dS\nonumber\\
&~~~~~+\ig \p_3 h \ak^{3\beta}\tpl\lkk\eta_{\beta}\ak^{3\gamma}\TP_i\lkk^2v_{\gamma}\ak^{i\alpha}\tpl\lkk\eta_{\alpha}\dS
-\ig \p_3 h\ak^{3\alpha}\ak^{3\beta}\tpl \ek_{\beta}\tpl\ek_{\gamma}\ak^{i\gamma} \TP_iv_{\alpha}\dS\nonumber\\
&~~~~~+\ig \p_3 h \ak^{3\beta}\tpl\lkk\eta_{\beta}\ak^{3\gamma}\TP_i\lkk^2\psi_{\gamma}\ak^{i\alpha}\tpl\lkk\eta_{\alpha}\dS+\text{error}.
\end{align}
The higher order terms on the third line are exactly cancelled out for the original problem (i.e., $\kk=0$) but we are unable to control them when $\kk>0$. However, in light of the definition of $\psi$ \eqref{psi'},  both of the higher order terms can indeed be cancelled by $-\ig \p_3 h\ak^{3\alpha}\ak^{3\beta}\tpl \ek_{\beta}\tpl\psi\dS$ in \eqref{AGU bdy} up to lower order terms plus the low-frequency term $$\TP^2\left((\text{Id}-\mathbb{P})\left(\TL\eta_{\beta}\ak^{i\beta}\TP_i\lkk^2 v-\TL\lkk^2\eta_{\beta}\ak^{i\beta}\TP_i v\right)\right),$$ which can be controlled by using Bernstein's inequality \eqref{bern1} in Lemma \ref{bernstein}.  The details can be found in Section \ref{bdrycancel}.

\begin{rmk}
Alternatively, one may boost interior regularity of the flow map to $H^{4.5}(\Omega)$. This can be done via the Cauchy invariance but one has to assume the initial vorticity $\curl v_0\in H^{3.5}(\Omega)$ (cf. \cite[Section 4.2]{KTV2016}). The correction term here helps us get rid of extra regularity of the flow map.
\end{rmk}

\subsubsection{Discussion on the uniform-in-$\kk$ energy estimate}\label{sect psi2}

We have to make sure that our energy estimate is uniform-in-$\kk$ in order to pass the sequence of approximate solutions to a limit as $\kk\rightarrow 0$, which, in fact, solves the original problem. This depends crucially on the aforementioned cancellation scheme on the boundary, as the terms in the second line of \eqref{1.26} would otherwise contribute to $\|\lkk \eta\|_{\dot{H}^{4}(\Gamma)}$ and $\|\ek\|_{\dot{H}^{4}(\Gamma)}=\|\lkk^2\eta\|_{\dot{H}^{4}(\Gamma)}$, respectively, which are of $0.5$-derivatives more regular than $v$ after moving to the interior. Of course, one may control these terms by ``moving $\TP^{0.5}$ to the tangential mollifier", i.e., 
\begin{align} \label{non kk uniform}
\|\lkk \eta \|_{\dot{H}^4(\Gamma)}\lesssim \kk^{-1/2}\|\eta\|_{\dot{H}^{3.5}(\Gamma)}.
\end{align}
But this fails to be uniform-in-$\kk$ when $\kk\rightarrow 0$.

Nevertheless, we have to use \eqref{non kk uniform} to treat the term in the third line of \eqref{1.26} but we can get an extra $\sqrt{\kk}$ owing to the structure of $\psi$ and this cancels $\kk^{-1/2}$ out.  Specifically, 
\begin{align*}
&\ig \p_3 h \ak^{3\beta}\tpl\lkk\eta_{\beta}\ak^{3\gamma}\TP_i\lkk^2\psi_{\gamma}\ak^{i\alpha}\tpl\lkk\eta_{\alpha}\dS\\
\lesssim &|\p_3 h \ak^{3\gamma}\ak^{i\alpha}|_{L^{\infty}(\Gamma)}|\TP\lkk^2\psi_{\gamma}|_{L^{\infty}(\Gamma)}|\lkk\tpl\eta|_{L^2(\Gamma)}|\ak^{3\beta}\tpl\lkk\eta_{\beta}|_{L^2(\Gamma)}\\
\lesssim &|\p_3 h \ak^{3\gamma}\ak^{i\alpha}|_{L^{\infty}(\Gamma)}|\TP\lkk^2\psi_{\gamma}|_{L^{\infty}(\Gamma)}|\ak^{3\beta}\tpl\lkk\eta_{\beta}|_{L^2(\Gamma)}\left(\kk^{-1/2}|\eta|_{\dot{H}^{3.5}(\Gamma)}\right).
\end{align*}
We employ the Sobolev embeddings $W^{1,4}(\R^2)\hookrightarrow L^{\infty}(\R^2)$ and $H^{0.5}(\R^2)\hookrightarrow L^{4}(\R^2)$, and the tangential smoothing property \eqref{lkk3} to have $|\TP\psi|_{L^{\infty}}\lesssim \sqrt{\kk} P(\|\p^2\eta\|_2,\|\p\eta\|_{L^{\infty}},\|v\|_3)$. We refer to \eqref{LB3 start}-\eqref{LB3} for the details. 

\subsubsection{Discussion on the existence of the approximate system}

The approximate system \eqref{approx} can be solved by an iteration of the approximate solutions. Specifically, let $(\eta^{(0)},v^{(0)},h^{(0)})=(\eta^{(1)},v^{(1)},h^{(1)})=(\text{Id},0,0)$ (i.e., the trivial solution). For each $n\geq 1$, we inductively define  $(\eta^{(n+1)},v^{(n+1)}, h^{(n+1)})$ to be the solution of the linearized system of equations
\begin{equation}
\begin{cases}
\p_t\eta^{(n+1)}=v^{(n+1)}+\psi^{(n)}~~~&\text{ in }\Omega, \\
\p_t v^{(n+1)}=-\nabla_{{\ak^{(n)}}}h^{(n+1)}-ge_3~~~&\text{ in }\Omega, \\
\text{div}_{\ak^{(n)}}v^{(n+1)}=-\p_t h^{(n+1)}~~~&\text{ in }\Omega, \\
h^{(n+1)}=0~~~&\text{ on }\Gamma, \\
(\eta^{(n+1)},v^{(n+1)}, h^{(n+1)})|_{t=0}=(\text{Id},v_0, h_0),
\end{cases} \label{approx n}
\end{equation} 
Here, $a^{(n)}:=[\p\eta^{(n)}]^{-1}$, $\ak^{(n)}:=[\p \ek^{(n)}]^{-1}$ and the correction term $\psi^{(n)}$ is determined by \eqref{psi'} with $(\eta^{(n)}, v^{(n)},\ak^{(n)})$.  The existence of $(\eta^{(n+1)},v^{(n+1)}, h^{(n+1)})$ follows from showing that the map $\Xi:\X\to \X$ (defined below) has a fixed point, where the Banach space $\X$ define as
\begin{align} \label{def X intro}
\begin{aligned}
\X= &\bigg\{(\xi, w,\pi):(w, \xi)|_{t=0}=(v_0,\text{Id}),\\
\sup_{t\in [0,T]}&\left(\|w(t),\p_t\pi(t)\|_{Z^4}+\|\nab_{\ak^{(n)}} \pi(t)\|_{L^\infty}+\|\p\nab_{\ak^{(n)}} \pi(t), \p_t\nab_{\ak^{(n)}} \pi(t)\|_{Z^3}+\|\p_t\xi(t)\|_{Z^3}+\|\p^2\xi(t)\|_{Z^2}+\|\p\xi(t)\|_{L^{\infty}}\right)\leq M\bigg\}.
\end{aligned}
\end{align}
Here, $Z^k$ denotes the mixed space-time $L^2$-Sobolev norm of order $\leq k$. 
The map $\Xi$ is given by $$\Xi:(\xi, w,\pi)\mapsto(\eta^{(n+1)}, v^{(n+1)}, h^{(n+1)})$$ where
we define $\eta^{(n+1)}, v^{(n+1)}$ and $h^{(n+1)}$, respectively, by 
\begin{align}
\p_t \eta^{(n+1)}=&w+\psi^{(n)},\eta^{(n+1)}(0)=\text{Id},\\
\p_t v^{(n+1)}=&-\nab_{\ak^{(n)}}\pi-ge_3, v^{(n+1)}(0)=v_0,
\label{eq v}
\end{align}
\begin{equation}
\text{and }
\begin{cases} \label{eq h}
\p_t^2 h^{(n+1)}-\Delta_{\ak^{(n)}}h^{(n+1)}=-\p_t \ak_{(n)}^{\nu\alpha}\p_\nu v_{\alpha}^{(n+1)}~~~&\text{ in }\Omega,\\
h^{(n+1)}=0~~~&\text{ on }\Gamma,
\end{cases} \text{ with } (h^{(n+1)},\p_t h^{(n+1)})|_{t=0}=(h_0,h_1).
\end{equation}
\begin{rmk}
The quantity $\|\nab_{\ak^{(n)}} \pi\|_{L^\infty}+\|\p\nab_{\ak^{(n)}} \pi, \p_t\nab_{\ak^{(n)}} \pi\|_{Z^3}$ can be replaced by $\|\nab_{\ak^{(n)}} \pi\|_{Z^4}$ if $\Omega$ is bounded. However, we can merely control $\nab_{\ak^{(n)}} \pi$ in $L^\infty$ as it corresponds to $\nab_{\ak^{(n)}} h^{(n+1)}$. 
\end{rmk}

The estimates for $\eta^{(n+1)}$ and $v^{(n+1)}$ are straightforward since they verify transport equations. However, the estimate for $\|\p\nab_{\ak^{(n)}} h^{(n+1)}\|_{Z^3}$ requires that of  $\|\nab_{\ak^{(n)}} h^{(n+1)}\|_{\dH^4}$ which cannot be done directly by commuting $\cp^4$ through the wave equation, since there is no hope to control the corresponding source term consists $\p_t \ak_{(n)}^{\nu\alpha}(\cp^4 \p_\nu v_{\alpha}^{(n+1)})$ in $L^2$. 

The key observation here is that $\cp^3\p_t \p_\nu v_{\alpha}^{(n+1)}$ can in fact be controlled thanks to \eqref{eq v} and the finiteness of $\|\p^4\nab_{\ak^{(n)}} \pi\|_{0}$, and so there is no problem to control the wave energies by commuting $\DD^3\p_t$ (where $\DD=\cp$ or $\p_t$) through the wave equation \eqref{eq h}. Now, the remaining  $\|\nab_{\ak^{(n)}} h^{(n+1)}\|_{\dH^4}$ can be treated using the elliptic estimate
\begin{align}
\|\nab_{\ak^{(n)}} h^{(n+1)} \|_{\dH^4} \lesssim  \|\lap_{\ak^{(n)}} h^{(n+1)}\|_{3}+\|\cp \p \ek^{(n)}\|_{3}\|h^{(n+1)}\|_{\hc}, 
\end{align}
which indicates that the control of $\|\nab_{\ak^{(n)}} h^{(n+1)}\|_{\dH^4}$ requires that of $\|\lap_{\ak^{(n)}} h^{(n+1)}\|_{3}$ up to the highest order. But this term is under control since \eqref{eq h} suggests that 
$$
\|\lap_{\ak^{(n)}} h^{(n+1)}\|_{3} \leq \|\p_t^2 h^{(n+1)}\|_{3}+\|\p_t \ak_{(n)}^{\nu\alpha}\p_\nu v_{\alpha}^{(n+1)}\|_{3}, 
$$
where the second term is of lower order and the first term can be controlled by invoking the wave energy with $2$ time derivatives. 

\subsection{Auxiliary results}
\subsubsection{Sobolev inequalities}
\begin{lem}\textbf{(Kato-Ponce {\cite{kato1988commutator} inequalities) }}  Let $J=(I-\Delta)^{1/2},~s\geq 0$. Then the following estimates hold:

(1) $\forall s\geq 0$, we have 
\begin{equation}\label{product}
\begin{aligned}
\|J^s(fg)\|_{L^2}&\lesssim \|f\|_{W^{s,p_1}}\|g\|_{L^{p_2}}+\|f\|_{L^{q_1}}\|g\|_{W^{s,q_2}},\\
\|\p^s(fg)\|_{L^2}&\lesssim \|f\|_{\dot{W}^{s,p_1}}\|g\|_{L^{p_2}}+\|f\|_{L^{q_1}}\|g\|_{\dot{W}^{s,q_2}},
\end{aligned}
\end{equation}with $1/2=1/p_1+1/p_2=1/q_1+1/q_2$ and $2\leq p_1,q_2<\infty$;

(2) $\forall s\geq 1$, we have
\begin{equation}\label{kato3}
\|J^s(fg)-(J^sf)g-f(J^sg)\|_{L^p}\lesssim\|f\|_{W^{1,p_1}}\|g\|_{W^{s-1,q_2}}+\|f\|_{W^{s-1,q_1}}\|g\|_{W^{1,q_2}}
\end{equation} for all the $1<p<p_1,p_2,q_1,q_2<\infty$ with $1/p_1+1/p_2=1/q_1+1/q_2=1/p$.
\end{lem}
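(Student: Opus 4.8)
\emph{Proof proposal.} Both estimates are classical consequences of Bony's paraproduct calculus, and the plan is to derive them that way. I would first fix a homogeneous Littlewood--Paley decomposition on $\R^d$, with dyadic frequency projections $\Delta_j$ and low-frequency cutoffs $S_j=\sum_{k\le j}\Delta_k$, and keep three standard tools at hand: Bernstein's inequality $\|\Delta_j u\|_{L^q}\lesssim 2^{dj(1/p-1/q)}\|\Delta_j u\|_{L^p}$ for $q\ge p$; the Littlewood--Paley characterizations $\|u\|_{L^p}\approx\|(\sum_j|\Delta_j u|^2)^{1/2}\|_{L^p}$ and $\|u\|_{\dot{W}^{s,p}}\approx\|(\sum_j 2^{2sj}|\Delta_j u|^2)^{1/2}\|_{L^p}$, valid for $1<p<\infty$ (with the evident inhomogeneous versions for $J^s$ and $W^{s,p}$); and the Fefferman--Stein vector-valued maximal inequality, which allows dominating a low-frequency cutoff $S_{j-2}u$ sitting inside an $\ell^2_j$-sum by the Hardy--Littlewood maximal function $Mu$. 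Then I split $fg=T_fg+T_gf+R(f,g)$, with $T_fg=\sum_j(S_{j-2}f)(\Delta_jg)$ the low--high paraproduct, $T_gf$ its high--low mirror, and $R(f,g)=\sum_{|j-k|\le2}(\Delta_jf)(\Delta_kg)$ the resonant part.

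For \eqref{product}: in $T_gf$ the output frequency is $\sim 2^j$ and is carried by $\Delta_jf$, so $J^s$ acts there like multiplication by $2^{sj}$; bounding $|S_{j-2}g|\lesssim Mg$, pulling $Mg$ out of the $\ell^2_j$-sum, applying H\"older with $1/2=1/p_1+1/p_2$ and the square-function identities gives $\|J^s(T_gf)\|_{L^2}\lesssim\|f\|_{W^{s,p_1}}\|g\|_{L^{p_2}}$, and symmetrically $\|J^s(T_fg)\|_{L^2}\lesssim\|f\|_{L^{q_1}}\|g\|_{W^{s,q_2}}$. For $R(f,g)$ the output frequency is only $\lesssim 2^j$, so one applies $J^s$ (bounded by $\sim 2^{sj}$ there), sums a Schur-type geometric series in the output frequency, and loads the full weight $2^{sj}$ onto $f$; H\"older together with the embedding $\dot{W}^{s,p_1}\hookrightarrow\dot{B}^{s}_{p_1,2}$ — valid precisely because $p_1\ge2$, by Minkowski's inequality, which is exactly where the hypothesis $2\le p_1$ enters — gives $\|J^sR(f,g)\|_{L^2}\lesssim\|f\|_{\dot{W}^{s,p_1}}\|g\|_{L^{p_2}}$, the mirror bound under $q_2\ge2$ loading the weight onto $g$ instead. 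Summing the three contributions proves \eqref{product}; the homogeneous version is identical with $|\nabla|^s$ in place of $J^s$.

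For \eqref{kato3}: the efficient route is to regard $J^s(fg)-(J^sf)g-f(J^sg)$ as the bilinear Fourier multiplier operator with symbol $\sigma(\xi,\eta)=\langle\xi+\eta\rangle^{s}-\langle\xi\rangle^{s}-\langle\eta\rangle^{s}$, $\langle\zeta\rangle=(1+|\zeta|^2)^{1/2}$. The point — made precise by a mean-value/Taylor estimate on $\langle\cdot\rangle^{s}$ — is that although $\sigma$ is a priori of order $s$, the two order-$s$ "diagonal" contributions (all $s$ derivatives on a single input) have been subtracted off, so that in the regime $|\xi|\lesssim|\eta|$ one has $|\partial_\xi^\alpha\partial_\eta^\beta\sigma(\xi,\eta)|\lesssim \langle\xi\rangle^{1-|\alpha|}\langle\eta\rangle^{s-1-|\beta|}$, with the mirror bound when $|\eta|\lesssim|\xi|$ (using $s\ge1$ throughout). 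Thus $\sigma$ splits as a sum of two symbols of this type, each a Coifman--Meyer multiplier after factoring $\langle D\rangle$ out of one input and $\langle D\rangle^{s-1}$ out of the other; applying the Coifman--Meyer bilinear multiplier theorem to each piece — equivalently, running the paraproduct decomposition and noting that the diagonal pieces of order $s$ cancel in the combination $J^s(fg)-(J^sf)g-f(J^sg)$, leaving the genuine commutators $[J^s,S_{j-2}f]\Delta_jg$ and $[J^s,S_{j-2}g]\Delta_jf$, each of which gains one full derivative — yields exactly the right-hand side of \eqref{kato3}; the ranges $1<p<p_i,q_i<\infty$ are what make the underlying (vector-valued) Calder\'on--Zygmund estimates legitimate, as they fail at $L^1$ and $L^\infty$.

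The only genuinely non-formal point, and the one I would be most careful with, is quantifying the gain of one derivative: for \eqref{product} this is checking that the resonant interaction really can absorb the full $2^{sj}$ weight onto a single factor (the role of $2\le p_1,q_2$, via $\dot{W}^{s,p}\hookrightarrow\dot{B}^s_{p,2}$), and for \eqref{kato3} it is the symbol bound $|\sigma(\xi,\eta)|\lesssim\langle\xi\rangle\langle\eta\rangle^{s-1}+\langle\xi\rangle^{s-1}\langle\eta\rangle$ together with the bookkeeping of which of the four Lebesgue exponents each factor is measured in; everything else (Bernstein, square functions, Fefferman--Stein, H\"older) is routine. As this is a classical result, in practice one would simply cite \cite{kato1988commutator} and its later refinements, but the above is the self-contained argument.
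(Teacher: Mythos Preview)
The paper does not prove this lemma: it is stated as a classical result and attributed to Kato--Ponce \cite{kato1988commutator} with no argument given. Your paraproduct/Coifman--Meyer outline is the standard correct route to these estimates, so there is nothing to compare against and no gap to flag.
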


\begin{lem}\label{harmonictrace}
\textbf{(Trace lemma for harmonic functions {\cite[Prop. 5.1.7]{taylorPDE1}) }}
Suppose that $s\geq 0.5$ and $u$ solves the boundary-value problem
\[
\Delta u=0~\text{ in }\Omega\text{ with }u=g~\text{ on }\Gamma
\] where $g\in H^{s}(\Gamma)$. Then it holds that 
\[
|g|_{s}\lesssim\|u\|_{s+0.5}\lesssim|g|_{s}
\]
\end{lem}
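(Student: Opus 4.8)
The domain $\Omega=\R^2\times(-\infty,0)$ is a half-space, so the plan is to pass to the tangential Fourier transform and write the harmonic extension explicitly. Write $y=(y',y_3)$ with $y'=(y_1,y_2)$, let $\xi\in\R^2$ be dual to $y'$, and let $\widehat{f}$ denote the Fourier transform of $f$ in the $y'$ variables. The unique bounded harmonic function on $\{y_3\le0\}$ with trace $g$ on $\Gamma$ is $\widehat{u}(\xi,y_3)=e^{|\xi|y_3}\widehat{g}(\xi)$: the competing mode $e^{-|\xi|y_3}$ is unbounded on the infinite slab, and bounded harmonic functions on $\Omega$ are unique, so this formula genuinely computes $u$. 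Both inequalities will then drop out of a single computation of $\|u\|_{H^{s+1/2}(\Omega)}$.

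Using harmonicity to trade normal for tangential derivatives — $\partial_{y_3}^2 u=-\TL u$, hence $\partial_{y_3}\widehat{u}(\xi,y_3)=|\xi|\,\widehat{u}(\xi,y_3)$ and $\partial_{y'}^\alpha\partial_{y_3}^k\widehat{u}(\xi,y_3)=(i\xi)^\alpha|\xi|^k e^{|\xi|y_3}\widehat{g}(\xi)$ — together with Plancherel in $y'$ and the elementary identity $\int_{-\infty}^0 e^{2|\xi|y_3}\,dy_3=(2|\xi|)^{-1}$, I obtain, for integer $m=s+\tfrac12$,
\[
\|u\|_{H^m(\Omega)}^2\;\approx\;\sum_{j=0}^m\int_{\R^2}|\xi|^{2j-1}|\widehat{g}(\xi)|^2\,d\xi\;\approx\;\int_{\R^2}\big(|\xi|^{-1}+|\xi|^{2s}\big)|\widehat{g}(\xi)|^2\,d\xi,
\]
and the fractional values of $s$ follow by complex interpolation (or by the analogous computation with the fractional norm on $\Omega$). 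Since $|\xi|^{-1}\ge1$ on $|\xi|\le1$, the right-hand side is $\gtrsim\int_{|\xi|\le1}|\widehat g|^2+\int_{|\xi|\ge1}|\xi|^{2s}|\widehat g|^2\approx|g|_{H^s(\Gamma)}^2$, which is the estimate $|g|_s\lesssim\|u\|_{s+0.5}$. For the reverse estimate it only remains to bound the low-frequency piece $\int_{|\xi|\le1}|\xi|^{-1}|\widehat g(\xi)|^2\,d\xi$ by $|g|_s^2$.

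This last point is the only genuinely delicate step, and the sole place the unboundedness of $\Omega$ intervenes: the bounded harmonic extension of a pure low-frequency bump need not lie in $L^2(\Omega)$, let alone with norm controlled by $|g|_s$, because $(2|\xi|)^{-1}$ blows up as $\xi\to0$. In every application of this lemma in the paper the boundary datum carries the Littlewood--Paley projection $\mathbb{P}$ (see the boundary condition for $\psi$ in \eqref{psi'}), so $\widehat g$ is supported in $\{|\xi|\gtrsim1\}$, the low-frequency piece is simply absent, and $\|u\|_{s+0.5}\lesssim|g|_s$ follows; equivalently, because the tangential dimension is $2$ the weight $|\xi|^{-1}$ is integrable near the origin, so for localized data ($\widehat g$ bounded near $0$) the piece is finite, and it disappears altogether if the inhomogeneous norms are replaced by their homogeneous counterparts, for which $\|u\|_{\dot{H}^{s+1/2}(\Omega)}\approx|g|_{\dot{H}^s(\Gamma)}$ holds without restriction. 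Apart from this, the proof is routine — the reduction $\partial_{y_3}^2=-\TL$, the $y_3$-integration, and the interpolation in $s$ carry no surprises — and one may alternatively just invoke \cite[Prop. 5.1.7]{taylorPDE1}.
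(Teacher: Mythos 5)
The paper supplies no argument of its own --- just the citation to Taylor --- so your explicit Poisson-kernel computation is a welcome self-contained derivation rather than a competing route. The formula $\widehat{u}(\xi,y_3)=e^{|\xi|y_3}\widehat{g}(\xi)$, the $y_3$-integration producing the weight $(2|\xi|)^{-1}$, and the resulting equivalence
\[
\|u\|_{H^{s+1/2}(\Omega)}^2\;\approx\;\int_{\R^2}\bigl(|\xi|^{-1}+|\xi|^{2s}\bigr)|\widehat{g}(\xi)|^2\,d\xi
\]
(extended to non-integer $s$ by interpolation) are all correct, and the left inequality $|g|_s\lesssim\|u\|_{s+0.5}$ follows at once.

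Your most valuable contribution is the caveat on the converse. You are right that on the infinite half-space $\|u\|_{s+0.5}\lesssim|g|_s$ is \emph{not} true for general $g\in H^s(\Gamma)$: for $\widehat{g}=\mathbf{1}_{\{|\xi|\le\epsilon\}}$ the ratio of the two sides grows like $\epsilon^{-1/2}$ as $\epsilon\to0$, and a datum with $\widehat{g}\sim|\xi|^{\alpha}$ near $\xi=0$, $-1<\alpha\le-1/2$, lies in every $H^s(\Gamma)$ yet has $u\notin L^2(\Omega)$. One small inaccuracy: your first formulation (``the bounded harmonic extension of a pure low-frequency bump need not lie in $L^2(\Omega)$'') is a little off --- a bump in the sense of $\widehat{g}$ smooth and compactly supported has $\widehat{g}$ bounded near $0$, and since $|\xi|^{-1}$ is locally integrable in $\R^2$ the extension \emph{does} lie in $L^2(\Omega)$; what fails is the quantitative control by $|g|_s$ --- but you correct this in the very next clause, and the objection stands. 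You are also right that the gap never opens in the paper: every application of the lemma is to a harmonic function with boundary data of the form $\TL^{-1}\mathbb{P}(\cdots)$, and $\mathbb{P}=P_{\geq1}$ removes the frequencies where $|\xi|^{-1}$ is large, so $|\xi|^{-1}\lesssim1$ on the support of $\widehat{g}$ and the estimate is genuine. It would be cleaner for the paper to build this into the statement --- e.g., assume $\widehat{g}$ is supported in $\{|\xi|\gtrsim1\}$, or replace the right-hand norm by the homogeneous $\dot{H}^{s+1/2}(\Omega)$ --- rather than relying on the reader to check that $\mathbb{P}$ is present at each use.
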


\begin{lem}[\textbf{Normal trace lemma}]\label{normaltrace}
It holds that for a vector field $X$
\begin{equation}\label{ntr}
|\TP X\cdot N|_{-0.5}\lesssim\|\TP X\|_0+\|\dive X\|_0
\end{equation}
\end{lem}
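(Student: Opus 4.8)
The plan is to establish \eqref{ntr} by duality against $H^{0.5}(\Gamma)$ test functions, using a harmonic extension together with a tangential integration by parts that keeps $\dive X$ \emph{undifferentiated}. Since the geometry is flat we have $N=e_3$ on $\Gamma=\R^2\times\{0\}$, so $\TP X\cdot N$ is the pair $(\p_1 X^3,\p_2 X^3)$ and it suffices to bound $|\p_i X^3|_{-0.5}$ for each Latin index $i$. First I would fix $\phi\in C_c^\infty(\Gamma)$, let $\Phi$ be its harmonic extension to $\Omega$, and record that $\|\nab\Phi\|_0\lesssim|\phi|_{0.5}$; this follows from Lemma~\ref{harmonictrace}, or directly on the half-space from $\widehat{\Phi}(\xi,y_3)=e^{|\xi|y_3}\hat\phi(\xi)$, which gives $\|\nab\Phi\|_0^2=\int_{\R^2}|\xi|\,|\hat\phi(\xi)|^2\,d\xi=|\phi|_{\dot{H}^{0.5}(\Gamma)}^2$.

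The core of the argument is the identity
\begin{equation}
\ig(\p_i X^3)\,\phi\dS=-\io(\dive X)(\p_i\Phi)\dy+\io(\p_i X^\alpha)(\p_\alpha\Phi)\dy,
\end{equation}
which I would derive as follows: move $\p_i$ off $X^3$ and onto $\phi$ on $\Gamma$ (no boundary term, $\p_i$ being tangential), then apply the divergence theorem in $\Omega$ to the vector field $Y^\alpha:=\delta^{3\alpha}X^3\,\p_i\Phi$, whose normal trace on $\Gamma$ is $X^3\p_i\phi$; this yields $-\io(\p_3 X^3)(\p_i\Phi)\dy-\io X^3(\p_i\p_3\Phi)\dy$. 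Then substitute $\p_3 X^3=\dive X-\p_j X^j$ (Latin $j$ summed) in the first term, integrate by parts once in $y_i$ in the second term, and integrate by parts once more in $y_i$ in the remaining term $-\io X^j(\p_i\p_j\Phi)\dy$. All three of these extra integrations by parts are tangential, hence boundary-free, and collecting terms gives the displayed identity. Cauchy--Schwarz then gives $\big|\ig(\p_i X^3)\phi\dS\big|\lesssim(\|\TP X\|_0+\|\dive X\|_0)\|\nab\Phi\|_0\lesssim(\|\TP X\|_0+\|\dive X\|_0)|\phi|_{0.5}$, and taking the supremum over $|\phi|_{0.5}\leq1$ and summing over $i=1,2$ finishes the proof.

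The only delicate point --- and the one I would treat as the main obstacle --- is the use of the divergence theorem on the unbounded $\Omega$: I would carry out the computation first for $X$ Schwartz (or compactly supported), where there is no flux at infinity, and then pass to a general $X$ with $\TP X,\dive X\in L^2(\Omega)$ by a density/completion argument (if the right-hand side of \eqref{ntr} is infinite there is nothing to prove). It is worth emphasizing why one needs the computation at all: naively applying the classical normal-trace estimate $|Y\cdot N|_{-0.5}\lesssim\|Y\|_0+\|\dive Y\|_0$ to $Y=\p_i X$ would produce $\|\TP\dive X\|_0$ on the right, and it is precisely the tangential integrations by parts above that downgrade this to the weaker $\|\dive X\|_0$ appearing in \eqref{ntr}. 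Once this structural point is in place, everything else is routine.
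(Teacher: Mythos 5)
Your argument is correct and is essentially the paper's own proof: both test $\TP X\cdot N$ against an $H^{0.5}(\Gamma)$ function, pass to a bounded $H^1$ extension (you take the harmonic one, which is a valid choice), apply the divergence theorem, and then move one tangential derivative so that $\dive X$ stays undifferentiated, arriving at the identical identity $\ig(\p_i X^3)\phi\dS=\io(\p_i X^\alpha)(\p_\alpha\Phi)\dy-\io(\dive X)(\p_i\Phi)\dy$. The only difference is bookkeeping: the paper applies the divergence theorem once to $\dive(\TP X\,\phi)$ and then integrates $\TP$ by parts, whereas you work component by component through $\p_3X^3=\dive X-\p_jX^j$; the content and the required density/approximation step on the unbounded domain are the same.
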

\begin{proof}
Let $\varphi\in H^{0.5}(\p\Omega)$ be a scalar test function, whose bounded extension in $\Omega$ is denoted by $\phi\in H^1(\Omega)$. Then 
\[
\int_{\p\Omega}\TP X\cdot N\varphi=\int_{\Omega}\dive(\TP X\phi)=\int_{\Omega}\TP X\nabla\phi-\int_{\Omega}\dive X\TP\phi\lesssim(\|\TP X\|_0+\|\dive X\|_0)|\varphi|_{0.5}.
\]
\end{proof}

\begin{lem}\label{bernstein}
\textbf{(Bernstein-type inequalities)} Let $0\leq \chi(\xi)\leq 1$ be a $C_c^{\infty}(\R^d)$ cut-off function which is supported in $\{|\xi|\leq 2\}$ and equals to 1 in $\{|\xi|\leq 1\}$. Define the Littlewood-Paley projection $P_{\leq N}$ in $\R^d$ with respect to $\chi$ by 
\[
P_{\leq N} f:=\left(\chi(\xi/N)\hat{f}(\xi)\right)^{\vee},~~P_{\geq N} f:=\left((1-\chi(\xi/N))\hat{f}(\xi)\right)^{\vee},~~P_{N} f:=\left((\chi(\xi/N)-\chi(2\xi/N))\hat{f}(\xi)\right)^{\vee}.
\] Then the following inequalities hold
\begin{align}
\label{bern1} \|P_{\leq N}f\|_{\dot{H}_x^s(\R^d)}&\lesssim_{s,d} N^s\|f\|_{L^2(\R^d)},~~~\forall s\geq 0;\\
\label{bern2} \|P_{\geq N}f\|_{\dot{H}_x^s(\R^d)}&\lesssim_{s,d} \|f\|_{\dot{H}_x^s(\R^d)},~~~\forall s\in \R.
\end{align}Analogous results also hold for $H_x^s(\R^d)$.
\end{lem}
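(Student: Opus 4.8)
The plan is to prove both inequalities directly on the Fourier side via Plancherel's theorem, exploiting that the Littlewood--Paley multipliers are either supported in, or bounded on, regions where the weight $|\xi|^{2s}$ is trivial to control. No harmonic analysis beyond the definition of the projections is needed.

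First, for \eqref{bern1}, I would write, by Plancherel,
\[
\|P_{\leq N}f\|_{\dot{H}_x^s(\R^d)}^2 = \int_{\R^d} |\xi|^{2s}\,\chi(\xi/N)^2\,|\hat{f}(\xi)|^2\,d\xi .
\]
The integrand is supported in $\{|\xi|\leq 2N\}$, and there, because $s\geq 0$, one has $|\xi|^{2s}\leq (2N)^{2s}$. Pulling out this constant and using $0\leq\chi\leq 1$ together with Plancherel again yields $\|P_{\leq N}f\|_{\dot{H}_x^s}^2\leq (2N)^{2s}\|f\|_{L^2}^2$, which is \eqref{bern1}. For \eqref{bern2}, the same identity gives
\[
\|P_{\geq N}f\|_{\dot{H}_x^s(\R^d)}^2 = \int_{\R^d} |\xi|^{2s}\,(1-\chi(\xi/N))^2\,|\hat{f}(\xi)|^2\,d\xi \leq \int_{\R^d}|\xi|^{2s}|\hat{f}(\xi)|^2\,d\xi = \|f\|_{\dot{H}_x^s(\R^d)}^2 ,
\]
using only $|1-\chi(\xi/N)|\leq 1$; since one never needs a lower bound on $|\xi|$ here, this argument is valid for every $s\in\R$.

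For the inhomogeneous statements one repeats the computation with the weight $(1+|\xi|^2)^s$ in place of $|\xi|^{2s}$: for \eqref{bern2} nothing changes, while for \eqref{bern1} one uses that on $\{|\xi|\leq 2N\}$ one has $(1+|\xi|^2)^s\lesssim_{s,d}\langle N\rangle^{2s}\lesssim_{s,d} N^{2s}$ in the regime $N\gtrsim 1$ in which the inequality is applied. There is essentially no obstacle in this lemma; the only point deserving a moment's care is the asymmetry in the range of $s$: \eqref{bern1} is restricted to $s\geq 0$ because otherwise $|\xi|^{2s}$ is unbounded near the origin, where $P_{\leq N}f$ need not vanish, whereas \eqref{bern2} carries no restriction precisely because $P_{\geq N}$ annihilates a neighborhood of the origin.
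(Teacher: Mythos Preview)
Your proof is correct and follows essentially the same approach as the paper: both arguments apply Plancherel, use the support/boundedness of the multiplier to control the $|\xi|^{2s}$ weight, and note that $s\geq 0$ is needed only for \eqref{bern1}. Your write-up is in fact slightly more detailed than the paper's, including the explicit remark on why the restriction on $s$ is asymmetric.
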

\begin{proof}
For the first inequality, we apply Plancherel's identity to get 
\[
\|P_{\leq N}f\|_{\dot{H}_x^s(\R^d)}=\||\p|^s P_{\leq N}f\|_{L^2(\R^d)}=\||\xi|^s\chi(\xi/N)\hat{f}(\xi)\|_{L^2(\R^d)}\lesssim N^s\cdot 1\cdot\|f\|_{L^2(\R^d)}.
\] Note that $s\geq 0$ is used in the last inequality.
For the second inequality, we just replace $\chi(\xi/N)$ above by $1-\chi(\xi/N)$ and notice that $0\leq 1-\chi(\xi/N)\leq 1$ to get
\[
\|P_{\geq N}f\|_{\dot{H}_x^s(\R^d)}\leq\||\xi|^s\hat{f}(\xi)\|_{L^2(\R^d)}\lesssim \|f\|_{\dot{H}_x^s(\R^d)}.
\]
Analogous results hold for $H^s(\R^d)$ by replacing $|\p|$ and $|\xi|$ with $\langle \p\rangle$ and $\langle \xi\rangle$ respectively. One can see Tao \cite[Appendix A]{tao2006nonlinear} for more Bernstein-type inequalities.
\end{proof}

\subsubsection{Properties of tangential smoothing operator}

As stated in the introduction, we are going to use the tangential smoothing to construct the approximate solutions. Here we list the definition and basic properties which are repeatedly used in this paper. Let $\zeta=\zeta(y_1,y_2)\in C_c^{\infty}(\R^2)$ be a standard cut-off function such that $\text{Spt }\zeta=\overline{B(0,1)}\subseteq\R^2,~~0\leq\zeta\leq 1$ and $\int_{\R^2}\zeta=1$. The corresponding dilation is $$\zeta_{\kk}(y_1,y_2)=\frac{1}{\kk^2}\zeta\left(\frac{y_1}{\kk},\frac{y_2}{\kk}\right),~~\kk>0.$$ Now we define
\begin{equation}\label{lkk0}
\lkk f(y_1,y_2,y_3):=\int_{\R^2}\zeta_{\kk}(y_1-z_1,y_2-z_2)f(z_1,z_2,z_3)\dz_1\dz_2.
\end{equation}

The following lemma records the basic properties of tangential smoothing.
\begin{lem}\label{tgsmooth}
 \textbf{(Regularity and Commutator estimates)} For $\kk>0$, we have

(1) The following regularity estimates:
\begin{align}
\label{lkk11} \|\lkk f\|_s&\lesssim \|f\|_s,~~\forall s\geq 0;\\
\label{lkk1} |\lkk f|_s&\lesssim |f|_s,~~\forall s\geq -0.5;\\ 
\label{lkk2} |\TP\lkk f|_0&\lesssim \kk^{-s}|f|_{1-s}, ~~\forall s\in [0,1];\\  
\label{lkk3} |f-\lkk f|_{L^{\infty}}&\lesssim \sqrt{\kk}|\TP f|_{0.5}.  
\end{align}

(2) Commutator estimates: Define the commutator $[\lkk,f]g:=\lkk(fg)-f\lkk(g)$. Then it satisfies
\begin{align}
\label{lkk4} |[\lkk,f]g|_0 &\lesssim|f|_{L^{\infty}}|g|_0,\\ 
\label{lkk5} |[\lkk,f]\TP g|_0 &\lesssim |f|_{W^{1,\infty}}|g|_0, \\ 
\label{lkk6} |[\lkk,f]\TP g|_{0.5}&\lesssim |f|_{W^{1,\infty}}|g|_{0.5}.
\end{align}
\end{lem}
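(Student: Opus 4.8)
The plan is to use that $\lkk$ is, for each fixed $y_3$, convolution in the tangential variables $(y_1,y_2)$ against the approximate identity $\zeta_\kk$, equivalently the tangential Fourier multiplier with symbol $\hat\zeta(\kk\xi')$, where $\xi'\in\R^2$ denotes the tangential frequency. Because $\zeta\in C_c^\infty(\R^2)$ with $\int\zeta=1$, the symbol satisfies $|\hat\zeta(\kk\xi')|\le\|\zeta\|_{L^1}=1$, $\hat\zeta$ is Schwartz, and $\hat\zeta(0)=1$; hence the two elementary facts $|\eta|^s|\hat\zeta(\eta)|\lesssim_s 1$ for every $s\ge 0$ (Schwartz decay) and $|1-\hat\zeta(\eta)|\lesssim\min(1,|\eta|)\le|\eta|^{1/2}$ (mean value theorem near $0$, boundedness of $\hat\zeta$ elsewhere). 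Every estimate in the lemma follows from these observations together with Plancherel, Young's inequality and the mean value theorem.

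For the regularity bounds \eqref{lkk11}--\eqref{lkk2}: $\lkk$ commutes with all partial derivatives, and $|\hat\zeta(\kk\xi')|\le 1$, so Plancherel gives $\|\lkk f\|_s\lesssim\|f\|_s$ (for $s\ge 0$, integrating in $y_3$) and $|\lkk f|_s\lesssim|f|_s$ for \emph{all} $s\in\R$, in particular $s\ge -0.5$; alternatively one argues slice-by-slice in $y_3$ via Young's inequality $\|\zeta_\kk*g\|_{L^2(\R^2)}\le\|\zeta_\kk\|_{L^1}\|g\|_{L^2(\R^2)}$. For \eqref{lkk2}, $\widehat{\TP\lkk f}(\xi')=i\xi'_j\hat\zeta(\kk\xi')\hat f(\xi')$ and $|\xi'_j\hat\zeta(\kk\xi')|\le|\xi'|^{1-s}\big((\kk|\xi'|)^s|\hat\zeta(\kk\xi')|\big)\kk^{-s}\lesssim\kk^{-s}|\xi'|^{1-s}$ for $s\in[0,1]$, so Plancherel yields $|\TP\lkk f|_0\lesssim\kk^{-s}\||\xi'|^{1-s}\hat f\|_{L^2}\lesssim\kk^{-s}|f|_{1-s}$ since $1-s\ge 0$. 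For \eqref{lkk3}, Fourier inversion and $|1-\hat\zeta(\kk\xi')|\lesssim\min(1,\kk|\xi'|)$ give $|f-\lkk f|_{L^\infty}\le\int_{\R^2}|1-\hat\zeta(\kk\xi')||\hat f(\xi')|\,d\xi'$; splitting this integral at $|\xi'|=\kk^{-1}$ and using the bound $\kk|\xi'|$ on the low piece and $1$ on the high piece, then Cauchy--Schwarz against $\langle\xi'\rangle^{1/2}|\xi'|\hat f$ (whose $L^2$ norm is comparable to $|\TP f|_{0.5}$), together with $\int_{|\xi'|\le\kk^{-1}}\langle\xi'\rangle^{-1}\,d\xi'\lesssim\kk^{-1}$ and $\int_{|\xi'|\ge\kk^{-1}}\langle\xi'\rangle^{-1}|\xi'|^{-2}\,d\xi'\lesssim\kk$, produces the gain $\sqrt\kk$ and $|f-\lkk f|_{L^\infty}\lesssim\sqrt\kk\,|\TP f|_{0.5}$.

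For the commutator estimates I will use the kernel representation $[\lkk,f]g(y')=\int_{\R^2}\zeta_\kk(y'-z')\big(f(z')-f(y')\big)g(z')\,dz'$ with $y_3$ frozen. Then \eqref{lkk4} is immediate from $|f(z')-f(y')|\le 2|f|_{L^\infty}$, which yields $|[\lkk,f]g|\le 2|f|_{L^\infty}\lkk|g|$ and then \eqref{lkk1} finishes it. For \eqref{lkk5} integrate by parts in $z'$ to move $\TP$ off $g$:
\[
[\lkk,f]\TP g(y')=\int_{\R^2}(\TP\zeta_\kk)(y'-z')\big(f(z')-f(y')\big)g(z')\,dz'-\lkk\big((\TP f)g\big)(y');
\]
the last term has $L^2(\Gamma)$ norm $\le|\TP f|_{L^\infty}|g|_0$ by \eqref{lkk1}, and for the first term one uses $|f(z')-f(y')|\le|\TP f|_{L^\infty}|y'-z'|$ on $\text{Spt}\,\zeta_\kk(y'-\cdot)$ together with the scaling identity $\int_{\R^2}|w|\,|(\TP\zeta_\kk)(w)|\,dw=\int_{\R^2}|u|\,|\TP\zeta(u)|\,du=O(1)$, which exhibits it as $|\TP f|_{L^\infty}$ times a convolution of $|g|$ against an $L^1$ kernel of $O(1)$ mass, hence $\lesssim|f|_{W^{1,\infty}}|g|_0$. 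Finally \eqref{lkk6} follows by interpolation: applying a tangential derivative $\TP'$ to $[\lkk,f]\TP g$ and using the Leibniz rule produces exactly $[\lkk,f]\TP(\TP'g)+\lkk\big((\TP'f)\TP g\big)-(\TP'f)\lkk\TP g$, the first term bounded by $|f|_{W^{1,\infty}}|g|_1$ via \eqref{lkk5} and the other two by \eqref{lkk1} and the $L^\infty$ bound on $\TP'f$; combined with \eqref{lkk5} itself this gives $|[\lkk,f]\TP g|_1\lesssim|f|_{W^{1,\infty}}|g|_1$ and $|[\lkk,f]\TP g|_0\lesssim|f|_{W^{1,\infty}}|g|_0$, and complex interpolation between $H^0(\Gamma)$ and $H^1(\Gamma)$ yields \eqref{lkk6}.

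The only step needing genuine care is \eqref{lkk3}: the naive estimate $\int|1-\hat\zeta(\kk\xi')||\hat f|\lesssim\sqrt\kk\int|\xi'|^{1/2}|\hat f|$ fails because $\langle\xi'\rangle^{-1}$ is not square-integrable on $\R^2$, so the dyadic split at $|\xi'|\sim\kk^{-1}$ is essential in order to extract precisely one half-derivative of $\TP f$ and the factor $\sqrt\kk$ at the same time. Everything else is routine convolution and Fourier multiplier bookkeeping.
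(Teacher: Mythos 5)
Your argument is correct, and for parts \eqref{lkk11}--\eqref{lkk2} and the commutator estimates it matches the paper's approach (the paper cites \cite[Lemma~5.1]{coutand2010LWP} for \eqref{lkk4}--\eqref{lkk5} rather than reproving them, and proves \eqref{lkk6} by exactly your interpolation route, computing $\TP\left([\lkk,f]\TP g\right)=[\lkk,\TP f]\TP g+[\lkk,f]\TP^2 g$ and applying \eqref{lkk4} and \eqref{lkk5} to the two pieces). The genuine difference is in \eqref{lkk3}: the paper stays entirely in physical space, writing
\[
|f-\lkk f|\;=\;\Big|\int_{B(0,\kk)}\zeta_\kk(z)\bigl(f(y-z)-f(y)\bigr)\,dz\Big|\;\lesssim\;|\zeta_\kk|_{L^{4/3}}\,|\kk\,\TP f|_{L^4}\;\lesssim\;\sqrt\kk\,|\zeta|_{L^{4/3}}\,|\TP f|_{0.5},
\]
i.e.\ it gets the factor $\sqrt\kk$ from the scaling $|\zeta_\kk|_{L^{4/3}}=\kk^{-1/2}|\zeta|_{L^{4/3}}$ together with the mean value theorem, and then invokes the Sobolev embedding $H^{0.5}(\R^2)\hookrightarrow L^4(\R^2)$. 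Your argument instead works on the Fourier side and needs the explicit frequency split at $|\xi'|\sim\kk^{-1}$ because, as you correctly flag, $\langle\xi'\rangle^{-1}$ fails to be square-integrable on $\R^2$. Both are correct; the paper's Hölder--Sobolev chain is shorter and avoids any dyadic bookkeeping, while your Fourier version has the advantage of making the threshold $|\xi'|\sim\kk^{-1}$ and the exact source of the $\sqrt\kk$-gain transparent, which generalizes more readily to other exponents (e.g.\ to see how \eqref{lkk2} and \eqref{lkk3} fit into a single interpolation-scale family of estimates). One small point worth noting for \eqref{lkk11}: you should be explicit that since $\lkk$ only acts tangentially, the $H^s(\Omega)$ bound is obtained by freezing $y_3$, applying the two-dimensional multiplier bound (or Young's inequality) slice-by-slice, and then integrating $\|\cdot\|_{H^s(\R^2)}^2$ in $y_3$ — you allude to this but don't spell it out.
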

\begin{proof}
(1): The estimates \eqref{lkk1} and \eqref{lkk2} follows directly from the definition \eqref{lkk0} and the basic properties of convolution. \eqref{lkk3} is derived by using Sobolev embedding and H\"older's inequality:
\begin{align*}
|f-\lkk f|=\left|\int_{\R^2\cap B(0,\kk)} \zeta_{\kk}(z) (f(y-z)-f(y))\dz\right|\lesssim \left|\zeta_{\kk}\right|_{L^{\frac43}}\left|\kk\TP f\right|_{L^4}\lesssim \sqrt{\kk}\left|\zeta\right|_{L^{\frac43}} \left|\TP f\right|_{0.5}.
\end{align*}

(2): The first three estimates can be found in \cite[Lemma 5.1]{coutand2010LWP}. To prove the fourth one, we note that
\[
\TP([\lkk,f]g)=\lkk(\TP f\TP g)+\lkk(f\TP^2 g)-\TP f\lkk \TP g-f \lkk \TP^2g=[\lkk,\TP f]\TP g+[\lkk, f]\TP^2 g.
\]
From \eqref{lkk4} and \eqref{lkk5} we know 
\begin{equation}\label{lkk7}
|\TP[\lkk,f]g|_0\lesssim|\TP f|_{L^{\infty}}|\TP g|_{0}+|f|_{W^{1,\infty}}|\TP g|_0\lesssim |f|_{W^{1,\infty}}|g|_1.
\end{equation} Therefore \eqref{lkk6} follows from the interpolation of \eqref{lkk5} and \eqref{lkk7}.
\end{proof}

\subsubsection{Elliptic estimates}
\begin{lem} \label{hodge} \textbf{(Hodge-type decomposition)} Let $X$ be a smooth vector field and $s\geq 1$, then it holds that
\begin{equation}
\|X\|_s\lesssim\|X\|_0+\|\curl X\|_{s-1}+\|\dive X\|_{s-1}+|X\cdot N|_{s-0.5}.
\end{equation}
\end{lem}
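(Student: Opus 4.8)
The plan is to prove the estimate first for $s=1$ via a div--curl integration-by-parts identity, and then promote it to all real $s\ge 1$ by commuting tangential derivatives through the curl/divergence/trace data and recovering the pure normal derivative algebraically.

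\emph{Base case $s=1$.} Since $\Gamma=\R^2\times\{0\}$ has constant outward unit normal $N=e_3$, I would start from the pointwise identity
\[
\p_\mu X_\nu\,\p_\mu X_\nu=|\curl X|^2+(\dive X)^2+\p_\mu(X_\nu\p_\nu X_\mu)-\p_\nu(X_\nu\,\dive X),
\]
which comes from $\epsilon_{\lambda\mu\nu}\epsilon_{\lambda\alpha\beta}=\delta_{\mu\alpha}\delta_{\nu\beta}-\delta_{\mu\beta}\delta_{\nu\alpha}$ together with the product rule. Integrating over $\Om$ and using the divergence theorem gives
\[
\|\nab X\|_0^2=\|\curl X\|_0^2+\|\dive X\|_0^2+\int_\Gamma\big(X_i\p_i X_3-(X\cdot N)\,\p_i X_i\big)\dS=\|\curl X\|_0^2+\|\dive X\|_0^2-2\int_\Gamma (X\cdot N)\,\p_i X_i\dS,
\]
where $i$ runs over $1,2$ and the last equality uses that $\p_i$ is tangential on $\Gamma$ (so $\int_\Gamma X_i\p_i X_3=-\int_\Gamma X_3\p_i X_i$). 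The boundary term is then estimated by $H^{-1/2}$--$H^{1/2}$ duality and the trace inequality: $\big|\int_\Gamma(X\cdot N)\p_i X_i\big|\lesssim|X\cdot N|_{0.5}\,|X|_{0.5}\lesssim|X\cdot N|_{0.5}\,\|X\|_1$. Combining with $\|X\|_1^2\lesssim\|X\|_0^2+\|\nab X\|_0^2$ and absorbing via Young's inequality yields the case $s=1$.

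\emph{Induction for integer $s\ge1$.} Assuming the bound for $s$, I would take $X\in H^{s+1}(\Om)$ and apply $\TP_j$ ($j=1,2$): since $N$ is constant, $\curl(\TP_j X)=\TP_j\curl X$, $\dive(\TP_j X)=\TP_j\dive X$, and $(\TP_j X)\cdot N=\TP_j(X\cdot N)$ on $\Gamma$, so the inductive hypothesis gives $\|\TP X\|_s\lesssim\|X\|_1+\|\curl X\|_s+\|\dive X\|_s+|X\cdot N|_{s+0.5}$. This controls every order-$(s{+}1)$ derivative of $X$ that contains at least one tangential direction. For the remaining $\p_3^{\,s+1}X$, I would apply $\p_3^{\,s}$ to the first-order relations
\[
\p_3 X_1=\p_1 X_3+(\curl X)_2,\qquad \p_3 X_2=\p_2 X_3-(\curl X)_1,\qquad \p_3 X_3=\dive X-\p_1 X_1-\p_2 X_2,
\]
noting that each resulting term is either a tangential derivative of an order-$s$ derivative of $X$ or an order-$s$ derivative of $\curl X$ or $\dive X$; hence $\|\p_3^{\,s+1}X\|_0\lesssim\|\TP X\|_s+\|\curl X\|_s+\|\dive X\|_s$. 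Adding the two estimates and replacing $\|X\|_1$ by the $s=1$ bound closes the induction. Non-integer $s\ge1$ then follows by interpolation between consecutive integers (all four norms interpolate consistently), or by rerunning the argument with the fractional tangential multiplier $\langle\TP\rangle^{s-1}$ in place of $\TP_j$, which commutes exactly with $\curl$, $\dive$, and the normal trace.

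\emph{Main obstacle.} The one genuinely delicate point is the boundary term in the base case: one must observe that, after exploiting the tangentiality of $\p_i$ and integrating by parts on $\Gamma$, the entire first-order boundary contribution collapses to $-2\int_\Gamma(X\cdot N)\p_i X_i\dS$, which involves only $X\cdot N$ and the tangential divergence of the tangential part — this is precisely what allows it to be absorbed into $\|X\|_1^2$ at the cost of $|X\cdot N|_{0.5}^2$. A secondary technicality is that $\Om$ is unbounded, so the divergence theorem and the $\|X\|_0$ term on the right should be justified by density within the class of smooth fields whose right-hand side is finite. (An alternative avoiding the induction is to write $-\lap X_\mu=(\curl\curl X)_\mu-\p_\mu(\dive X)\in H^{s-2}$ and solve for the components by standard half-space elliptic estimates — Dirichlet data $X\cdot N$ for the normal component $X_3$, and Neumann-type data built from $\curl X|_\Gamma$ and $\TP(X\cdot N)|_\Gamma$ for the tangential components — but this requires more elliptic machinery than the div--curl route above.)
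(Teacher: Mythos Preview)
Your proof is correct and takes essentially the same route as the paper, which records only the one-line hint ``this follows from $-\Delta X=\curl\curl X-\nabla\dive X$ and integration by parts.'' Your pointwise identity for $|\nabla X|^2$ is precisely what that Laplacian identity yields after pairing with $X$ and integrating by parts, and your induction via tangential differentiation plus algebraic recovery of $\p_3^{s+1}X$ simply supplies the higher-order details the paper omits.
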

\begin{proof}
This follows from the well-known identity $-\Delta X=\curl\curl X-\nabla\dive X$ and integration by parts.
\end{proof}

\begin{lem}\label{GLL}\textbf{(Interior elliptic estimate)}
The following elliptic estimate holds for $f=0$ on $\Gamma$.
\begin{equation}\label{GLL ell}
\begin{aligned}
\|\pak f\|_{\dH^1}^2 \leq&  C(\|\ek\|_{\hc})\left(\|\lap_{\ak} f\|_{0}^2+\|\p f\|_0^2\right), \quad\text{or}\quad  \|\pak f\|_{\dH^1}^2 \leq  C(\|\ek\|_{\hc})\left(\|\lap_{\ak} f\|_{0}^2+\|\p f\|_{L^\infty}^2\right), \\
\|\pak f\|_{\dH^r}^2 \leq&  C(\|\ek\|_{\hc})\left(\|\lap_{\ak} f\|_{r-1}^2+\|\p f\|_{L^\infty}^2+\|\p^2 f\|_{r-2}^2\right),\quad r=2,3, \\
\|\pak f\|_{\dH^r}^2 \leq&  C(\|\p \ek\|_{L^\infty}, \|\p^2 \ek\|_{r-2})\Bigg(\|\lap_{\ak} f\|_{r-1}^2+\|\cp \p \ek\|_{r-1}^2\left(\|\p f\|_{L^\infty}^2+\|\p^2 f\|_{r-2}^2\right)\Bigg),\quad r\geq 4.
\end{aligned}
\end{equation}
\end{lem}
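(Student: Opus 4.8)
The plan is to read \eqref{GLL ell} as the standard interior elliptic estimate for the Dirichlet problem for the uniformly elliptic operator $\lap_{\ak}$ on the flat half-space $\Omega=\R^2\times(-\infty,0)$, transported to the homogeneous scale of spaces forced by the unbounded domain, and to prove it by induction on the order $r$. To set up, write $g^{\mu\nu}:=\ak^{\mu\beta}\ak^{\nu\beta}$ and $b^\nu:=\ak^{\mu\beta}\p_\mu\ak^{\nu\beta}$, so that $\lap_{\ak}f=g^{\mu\nu}\p_\mu\p_\nu f+b^\nu\p_\nu f$; since $\ak=(\p\ek)^{-1}$ with $\det\p\ek$ bounded away from zero and $\p\ek$ bounded in $L^\infty$ (all controlled by $C(\|\ek\|_{\hc})$), the matrix $(g^{\mu\nu})$ is symmetric and uniformly elliptic, $g^{\mu\nu}\xi_\mu\xi_\nu\gtrsim|\xi|^2$, and on the short time interval where $\ek$ stays close to $\text{Id}$ the tensor $\delta^{\mu\nu}-g^{\mu\nu}$ is small in $L^\infty$. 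Thus $f$ solves $g^{\mu\nu}\p_\mu\p_\nu f=\lap_{\ak}f-b^\nu\p_\nu f$ with $f|_\Gamma=0$, the first-order term producing the $\|\p f\|_0$ (or $\|\p f\|_{L^\infty}$) on the right of the first line. The conclusion must be phrased through $\|\pak f\|_{\dH^r}$ rather than $\|f\|_{H^{r+1}}$, with only homogeneous norms on both sides, because in the intended application ($f=h$) neither $\p\eta$ nor $\p h$ lies in $L^2(\Omega)$; accordingly every lower-order remainder has to be charged to $\|\p f\|_{L^\infty}$ or $\|\p^2 f\|_{H^{r-2}}$, which is exactly the shape of the right-hand sides.

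For the core estimate I would apply the Hodge-type decomposition of Lemma \ref{hodge} to $\cp^{r-1}\nabla f$ (to $\nabla f$ itself when $r=1$): since $\curl\nabla f=0$ the curl term drops, $\di(\cp^{r-1}\nabla f)=\cp^{r-1}\Delta f$ is controlled by $\|\Delta f\|_{r-1}$, the lowest-order term $\|\cp^{r-1}\nabla f\|_0$ is controlled by $\|\p^2 f\|_{r-2}$ when $r\ge2$ (resp. by $\|\p f\|_0$ or $\|\p f\|_{L^\infty}$ when $r=1$), and the boundary term $|\cp^{r-1}\nabla f\cdot N|_{0.5}$ is a fractional trace of $\cp^{r-1}\p_3 f$ on $\Gamma$ that is absorbed into a small multiple of the left side plus strictly lower-order terms by trace interpolation. (Equivalently, one may run tangential difference quotients and then recover $\p_3^2 f$ algebraically from the equation.) This bounds all order-$(r+1)$ derivatives of $f$ carrying at least $r-1$ tangential derivatives; the remaining purely normal and one-extra-tangential ones are recovered by solving $g^{33}\p_3^2 f=\lap_{\ak}f-\sum_{(\mu,\nu)\ne(3,3)}g^{\mu\nu}\p_\mu\p_\nu f-b^\nu\p_\nu f$ for $\p_3^2 f$ (using $g^{33}\gtrsim1$) and differentiating it at most $r-1$ times, which passes $\|\lap_{\ak}f\|_{r-1}$ to the right-hand side.

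It then remains to convert $\Delta f$ into $\lap_{\ak}f$ and to control the commutators. Writing $\Delta f=\lap_{\ak}f+(\delta^{\mu\nu}-g^{\mu\nu})\p_\mu\p_\nu f-b^\nu\p_\nu f$ and $\cp^{r-1}\Delta f=\cp^{r-1}\lap_{\ak}f+\cp^{r-1}\!\big((\delta-g)\p^2 f\big)-\cp^{r-1}(b\p f)$, the term $(\delta-g)\cp^{r-1}\p^2 f$ is of the top order but carries the small factor $\|\delta-g\|_{L^\infty}$ and is absorbed (at $r=1$ directly, at higher $r$ into the already-controlled top-order left side), while all Leibniz terms where at least one derivative falls on $\delta-g$ or $b$ are strictly lower order in $f$. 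For $r=2,3$ these, and the commutators $[\cp^{r-1},g^{\mu\nu}]\p_\mu\p_\nu f$, $[\cp^{r-1},b^\nu]\p_\nu f$, are handled by the Kato--Ponce product inequalities \eqref{product} together with $H^2(\Omega)\hookrightarrow L^\infty(\Omega)$, and only $\|\ek\|_{\hc}$ enters the constant, giving the middle lines of \eqref{GLL ell}. For $r\ge4$ the worst commutator term is $(\cp^{r-1}g)\,\p^2 f\sim\big(\cp^{r-2}(\cp\p\ek)\big)(\p\ek)\,\p^2 f$; estimating the coefficient factors by $\|\cp\p\ek\|_{r-1}$, $\|\p\ek\|_{L^\infty}$, $\|\p^2\ek\|_{r-2}$ and the solution factors by $\|\p^2 f\|_{r-2}$ (using $H^{r-2}\hookrightarrow L^\infty$) and $\|\p f\|_{L^\infty}$ gives a bound of the form $C(\|\p\ek\|_{L^\infty},\|\p^2\ek\|_{r-2})\big(\|\cp\p\ek\|_{r-1}^2(\|\p f\|_{L^\infty}^2+\|\p^2 f\|_{r-2}^2)\big)$, which is precisely the last line of \eqref{GLL ell}.

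I expect the main obstacle to be the top-order bookkeeping in the $r\ge4$ case: because $\ek=\lkk^2\eta$ is only as regular as finiteness of $\|\ek\|_{\hc}$ permits ($\p^2\ek\in H^2$, $\p\ek\in L^\infty$) and $f$ itself only lies in $\hc$, one must verify that no term in any of the commutators costs more than $\|\cp\p\ek\|_{r-1}$ or $\|\p^2\ek\|_{r-2}$ of the coefficient, and no more than $\|\p^2 f\|_{r-2}$ or $\|\p f\|_{L^\infty}$ of the solution --- this is exactly why the top-order coefficient norm $\|\cp\p\ek\|_{r-1}$ has to be pulled out as an explicit factor, with the constant depending only on the lower norms of $\ek$, rather than folded into a single $C(\|\ek\|_{H^r})$. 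A secondary, pervasive point is that the homogeneous spaces $\dH^s$ carry no control of low frequencies, so every integration by parts and every product estimate must be arranged to leave a genuine derivative on each remainder, which is why $\|\p f\|$ --- never $\|f\|$ --- appears on the right.
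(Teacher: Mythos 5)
Your proof takes a genuinely different route from the paper's. You treat $\lap_{\ak}$ as a small perturbation of the flat Laplacian $\Delta$ and apply the Hodge-type decomposition (Lemma \ref{hodge}) to the flat gradient $\cp^{r-1}\nabla f$, relying on $\|\delta^{\mu\nu}-g^{\mu\nu}\|_{L^\infty}$ being small to absorb the top-order piece of the difference. The paper instead applies the div-curl estimate to $\nabla_{\ak}f$ itself: the Eulerian divergence is $\lap_{\ak}f$, the Eulerian curl vanishes, and the conversion between Lagrangian and Eulerian operators costs only lower-order commutators. The paper's organization avoids explicit perturbativity in the structure of the estimate, keeps the estimate tied to the covariant quantity $\nabla_{\ak}f$ actually needed downstream, and makes the extraction of the sharp coefficient factor $\|\cp\p\ek\|_{r-1}$ for $r\ge 4$ more transparent.

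The genuine gap is in how you dispose of the boundary term. You claim that $|\cp^{r-1}\nabla f\cdot N|_{0.5}=|\cp^{r-1}\p_3 f|_{H^{0.5}(\Gamma)}$ is ``absorbed into a small multiple of the left side plus strictly lower-order terms by trace interpolation.'' This is not correct: the trace map $H^1(\Omega)\to H^{1/2}(\Gamma)$ has an order-one constant that does not shrink, and interpolating it towards lower interior norms costs a \emph{higher} interior norm on the other side, since $L^2(\Omega)$ functions have no trace. Even after first invoking Lemma \ref{normaltrace}, one is left with the top-order tangential piece $\|\cp^r\nabla f\|_0$ (resp.\ $\|\TP^r\nabla_{\ak}f\|_0$ in the paper's variables), which is comparable to the quantity being estimated and cannot be absorbed by an $\epsilon$ coming from traces. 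What actually handles this term is not smallness but an exact cancellation supplied by the Dirichlet condition: because $\cp^{r-1}f|_{\Gamma}=0$, two integrations by parts give the identity $\|\nabla^2(\cp^{r-1}f)\|_{L^2(\Omega)}=\|\Delta(\cp^{r-1}f)\|_{L^2(\Omega)}$, the boundary contributions reducing to $-\int_\Gamma(\p_3\cp^{r-1}f)\,\TL(\cp^{r-1}f)\,dS$ which vanishes identically. Your parenthetical remark about running tangential difference quotients and then recovering $\p_3^2 f$ from the equation is a valid alternative --- it encodes the same cancellation --- but it is \emph{not} equivalent to the trace-interpolation claim, and it is the cancellation, not an absorption, that must be invoked. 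The paper's proof spells out the analogous step in the Eulerian variables: it rewrites $\int|\TP^r\nabla_{\ak}f|^2=\int(\TP^r\nabla_{\ak}f)(\nabla_{\ak}\TP^r f)+\text{commutator}$, integrates $\nabla_{\ak}$ by parts with zero boundary contribution since $\TP^r f|_\Gamma=0$, commutes $\nabla_{\ak}$ through $\TP^r$ to generate $-\int(\TP^r\lap_{\ak}f)(\TP^r f)$, and only then uses one more tangential integration by parts together with $\epsilon$-Young. That integration-by-parts step is the crux of the lemma and is the piece missing from your argument; the remainder of your bookkeeping (Kato--Ponce, the split that isolates $\|\cp\p\ek\|_{r-1}$ for $r\ge4$, recovery of normal derivatives from the equation) is sound.
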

\begin{proof}
The proof is largely similar to what is in \cite[Appendix B]{GLL2019LWP} and so we shall only sketch the details. The main idea here is to apply the div-curl estimate on $\|\nab_{\ak} f\|_{\dH^r}^2$. The (Eulerian) divergence contributes to the Laplacian term, and the (Eulerian) curl of $\pak f$ vanishes. Then the term $\|\TP^r \nab_{\ak} f\|_0^2$ will be generated by Lemma \ref{normaltrace} during this process. To control this term, we write
\begin{align*}
 \int_{\Omega} (\TP^r \nab_{\ak} f)(\TP^r\nab_{\ak} f) 
= \int_{\Omega} (\TP^r \nab_{\ak} f)(\nab_{\ak}\TP^r f) + \int_{\Omega} (\TP^r \nab_{\ak} f) ( [\TP^r, \nab_{\ak} ] f),
\end{align*}
where 
\begin{align*}
\int_{\Omega} (\TP^r \nab_{\ak} f) ( [\TP^r, \nab_{\ak} ] f)\lesssim \epsilon \|\TP^r \nab_{\ak} f\|_0^2+ \|[\TP^r, \nab_{\ak}] f\|_0^2,
\end{align*}
and
$\|[\TP^r, \nab_{\ak} ] f\|_{0}^2$ is controlled by either $C(\|\ek\|_{\hc})\|\p f\|_1^2$ or $C(\|\ek\|_{\hc})\|\p f\|_{L^\infty}^2$ when $r=1$,  by $ C(\|\ek\|_{\hc})\left(\|\p f\|_{L^\infty}^2+\|\p^2 f\|_{r-2}^2\right)$ when $r=2,3$, and by $ C(\|\p\ek\|_{L^\infty}, \|\p^2 \ek\|_{r-2})\|\TP\p \ek\|_{r-1}^2\left(\|\p f\|_{L^\infty}^2+\|\p^2 f\|_{r-2}^2\right)$ when $r\geq 4$. Moreover, by integrating $\nab_{\ak}$ by parts, we have
\begin{align}
\int_{\Omega} (\TP^r \nab_{\ak} f)(\nab_{\ak}\TP^r f)  = -\int_{\Omega} (\nab_{\ak}\TP^r\nab_{\ak} f) (\TP^r f), \label{nab TP nab}
\end{align}
and there is no boundary term since $f=0$ on $\Gamma$ implies $\TP^r f=0$ on $\Gamma$. The main term contributed by the RHS of \eqref{nab TP nab} after commuting $\nab_{\ak}$ through $\TP^r$ is $-\int_\Omega (\TP^r \lap_{\ak} f) (\TP^r f)$, which can be controlled by integrating $\TP$ by parts and then using the $\epsilon$-Young's inequality, i.e., 
\begin{align*}
-\int_\Omega (\TP^r \lap_{\ak} f) (\TP^r f) = \int_{\Omega}  (\TP^{r-1} \lap_{\ak} f) (\TP^{r+1} f) \lesssim \|\lap_{\ak}f\|_{r-1}^2+ \epsilon \|\TP^{r+1} f\|_{0}^2,
\end{align*}
where $\epsilon \|\TP^{r+1} f\|_{0}^2 \leq \epsilon \|\p \TP^r f\|_0^2$, and $\epsilon \|\p \TP^r f\|_0^2$ is comparable to $\epsilon\|\TP^r \nab_{\ak} f\|_0^2$ modulo error terms that take the form $\|[\TP^r, \nab_{\ak} ] f\|_{0}^2$. On the other hand, the error term generated by the RHS of \eqref{nab TP nab} after commuting $\nab_{\ak}$ through $\TP^r$  takes the form
\begin{align*}
\int_{\Omega} [\TP^r, \ak^{\mu\alpha}] \p_\mu (\ak^{\nu}_{\,\,\alpha} \p_\nu f) (\TP^r f),
\end{align*}
and it contributes to (up to the highest order)
$$
I:=\int_{\Omega} (\TP^r \ak^{\mu\alpha}) \left(\p_\mu (\ak^{\nu}_{\,\,\alpha} \p_\nu f) \right)(\TP^r f) ,\quad \text{and}\quad II:=\int_{\Omega} (\TP \ak^{\mu\alpha}) \left(\TP^{r-1}\p_\mu (\ak^{\nu}_{\,\,\alpha} \p_\nu f) \right)(\TP^r f).
$$
Here, 
\begin{align*}
 II \lesssim \epsilon \|\nab_{\ak} f\|_{\dH^r}^2+ \|(\TP \ak)(\TP^r f)\|_{0}^2, 
\end{align*}
and $\|(\TP \ak)(\TP^r f)\|_{0}^2$ can be bounded directly by the RHS of \eqref{GLL ell}. 
Also, for $I$, we have
\begin{align*}
\int_{\Omega} (\TP \ak^{\mu\alpha}) \left(\p_\mu (\ak^{\nu}_{\,\,\alpha} \p_\nu f) \right)(\TP f) \lesssim \epsilon \|\nab_{\ak} f\|_{\dH^1}^2+ \|(\TP \ak)(\TP f)\|_{0}^2,\\
\int_{\Omega} (\TP^r \ak^{\mu\alpha}) \left(\p_\mu (\ak^{\nu}_{\,\,\alpha} \p_\nu f) \right)(\TP^r f) \lesssim \|\TP^r f\|_0^2 + \|\p (\nab_{\ak} f)\|_{L^\infty}^2\|\TP^r \ak\|_0^2,  \quad \text{when}\,\,r\geq 2, 
\end{align*}
where $\|(\TP \ak)(\TP f)\|_{0}^2, \|\TP^r f\|_0^2$ and $\|\p (\nab_{\ak} f)\|_{L^\infty}^2\|\TP^r \ak\|_0^2$ can all be bounded directly by the RHS of \eqref{GLL ell}. 
\end{proof}

\begin{rmk}
The inequalities in \eqref{GLL ell} can be simplified to 
\begin{align*}
\|\pak f\|_{\dH^r}^2 \leq  C(\|\ek\|_{\hc})\left(\|\lap_{\ak} f\|_{r-1}^2+\|f\|_r^2\right),\quad r=1, 2,3, \\
\|\pak f\|_{\dH^r}^2 \leq  C(\|\p \ek\|_{L^\infty}, \|\p^2 \ek\|_{r-2})\Bigg(\|\lap_{\ak} f\|_{r-1}^2+\|\cp \p \ek\|_{r-1}^2\|f\|_r^2\Bigg),\quad r\geq 4,
\end{align*}
when $\Omega$ is a bounded domain. Nevertheless, for an unbounded domain $\Omega$ we have to be more careful when $f=h$ since $\p h$ can only be controlled in $L^\infty(\Omega)$.
\end{rmk}

\section{The Approximate system and uniform a priori estimates}\label{kkapriori}

In this section we are going to introduce the approximation of the water wave problem and derive its uniform a priori estimates.

\subsection{The approximate system}

For $\kk>0$, we consider the following approximate system 
\begin{equation}\label{app1}
\begin{cases}
\p_t \eta=v+\psi &~~~\text{in } \Omega, \\
\p_t v=-\pak h -ge_3 &~~~\text{in } \Omega, \\
\diva v=-e'(h)\p_t h &~~~\text{in } \Omega, \\
h=0 &~~~\text{on } \Gamma, \\
(\eta,v, h)|_{\{t=0\}}=(\text{Id},v_0, h_0).
\end{cases}
\end{equation} Here $\ak:=(\p\ek)^{-1}$ where $\ek$ is the smoothed version of the flow map $\eta$ defined by $\ek:=\lkk^2\eta$. The term $\psi=\psi(\eta,v)$ is a correction term which solves the half-space Laplacian equation
\begin{equation}\label{psi}
\begin{cases}
\Delta \psi=0  &~~~\text{in }\Omega, \\
\psi=\TL^{-1}\mathbb{P}\left(\TL\eta_{\beta}\ak^{i\beta}\TP_i\lkk^2 v-\TL\lkk^2\eta_{\beta}\ak^{i\beta}\TP_i v\right) &~~~\text{on }\Gamma,
\end{cases}
\end{equation}where $\mathbb{P}f:=P_{\geq 1} f$ denotes the standard Littlewood-Paley projection in $\R^2$ defined is Lemma \ref{bernstein}, which removes the low-frequency part. $\TL:=\p_1^2+\p_2^2$ denotes the tangential Laplacian operator and $\TL^{-1}f:=(-|\xi|^{-2}\hat{f})^{\vee}$ is the inverse of $\TL$ on $\R^2$.

\begin{rmk}
~\\
\begin{enumerate}
\item The correction term $\psi\to 0$ as $\kk\to 0$. We introduce such a term to eliminate the higher order boundary terms which appears in the tangential estimates of $v$. These higher order boundary terms are zero when $\kk=0$ but cannot be controlled when $\kk>0$.
\item The Littlewood-Paley projection is necessary here because we will repeatedly use $$|\TL^{-1}\mathbb{P}f|_{s}\lesssim |\mathbb{P}f|_{{H}^{s-2}}\approx|\mathbb{P}f|_{\dot{H}^{s-2}}\lesssim|f|_{\dot{H}^{s-2}},$$ which can be proved via Bernstein inequality \eqref{bern2}. Without $\mathbb{P}$ the low-frequency part loses control when taking $\TL^{-1}.$
\end{enumerate}
\end{rmk}

Fix any $\kk>0$, we will prove in Section \ref{kkexist} that there exists a $T_{\kk}>0$ depending on the initial data and $\kk>0$ such that there is a unique solution $(v(\kk),h(\kk),\eta(\kk))$ to \eqref{app1} in $[0,T_{\kk}]$. For simplicity we omit the $\kk$ and only write $v,h,\eta$ in this manuscript. The remaining context in this section is to derive the uniform-in-$\kk$ a priori estimates for the solutions to \eqref{app1}. \textit{This guarantees that we are able to obtain the solution of the original problem in some fixed time interval by passing $\kk\to 0$.}   

For simplicity in notations, we still denote the solution to the $\kk$-approximation system by $(\eta,v,h)$ with $\kk$ omitted. Define the energy functional for the $\kk$-approximate problem \eqref{app1} to be
\begin{equation}\label{Ekk}
\EE_{\kk}(t):=\|\eta(t)\|_{\hc}^2+\sum_{k=0}^4\|\p_t^{4-k}v(t)\|_k^2+\left(\|h(t)\|_{\hc}^2+\sum_{k=0}^3\|\p_t^{4-k}h(t)\|_k^2\right)+|\ak^{3\alpha}\TP^4\lkk\eta_{\alpha}(t)|_0^2.
\end{equation} 
\begin{rmk}
We recall that \eqref{Ekk} can be simplified to 
$$
\|\eta(t)\|_4^2+ \sum_{k=0}^4 (\|\p_t^{4-k} v(t)\|_{k}^2+ \|\p_t^{4-k} h(t)\|_{k}^2)+|\ak^{3\alpha}\TP^4\lkk\eta_{\alpha}(t)|_0^2.
$$ in the case a bounded domain. We refer to the remark after \eqref{energy} and Section \ref{comparsion} for the details. 
\end{rmk}
The rest of this section is devoted to prove:
\begin{prop}\label{uniformkk}
Let $\EE_\kk$ be defined as above. Then there exists a time $T>0$ independent of $\kk$ such that 
\begin{equation}\label{Ekk0}
\sup_{0\leq t\leq T} \EE_{\kk}(t)\leq P(\|v_0\|_4,\|h_0\|_{\hc}).
\end{equation}
\end{prop}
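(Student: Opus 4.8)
\textbf{Proof proposal for Proposition \ref{uniformkk}.}

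The plan is to run an $L^2$-based energy estimate on the $\kk$-approximate system \eqref{app1}, organized by the number of time derivatives versus tangential (and eventually full spatial) derivatives, and to close a Gr\"onwall-type inequality of the form $\frac{d}{dt}\EE_\kk(t)\lesssim P(\EE_\kk(t))$ with constants independent of $\kk$. First I would record the basic structural identities: the evolution of $\ak$ and $\Jrk:=\det(\p\ek)$, the fact that $\ak|_{t=0}=I$, and the Piola-type identity $\p_\mu(\Jrk\ak^{\mu\alpha})=0$, which is what makes the divergence-form integrations by parts clean. I would also establish the a priori smallness bookkeeping: on a short time interval $[0,T]$, $\|\ek-\idd\|$ in the relevant norms and $\|\ak-I\|$ are $O(T)\cdot P(\EE_\kk)$, so $\ak$ is invertible and the Rayleigh–Taylor sign $-\p_3 h\geq c_0/2$ persists; these are exactly the ingredients needed so that $\EE_{TB}=|\ak^{3\alpha}\TP^4\lkk\eta_\alpha|_0^2$ is a genuine positive energy. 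Throughout, tangential smoothing is controlled via Lemma \ref{tgsmooth}, elliptic gains via Lemma \ref{GLL} and Lemma \ref{hodge}, and the correction term $\psi$ via Lemma \ref{harmonictrace} applied to \eqref{psi}, using the Littlewood–Paley projection $\mathbb{P}$ to kill the low frequencies as in the remark after \eqref{psi}.

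The core of the argument is the tangential estimate at top order. Applying $\tpl=\TP^2\TL$ to the second and third equations of \eqref{app1} and rewriting $\tpl(\pak h)$ and $\tpl(\diva v)$ through the Alinhac good unknowns $\HH=\tpl h-\tpl\ek\cdot\pak h$ and $\VV=\tpl v-\tpl\ek\cdot\pak v$ (using \eqref{goodhh}--\eqref{goodvv}, i.e. Lemma \ref{lem AGU property}), one gets $\p_t\VV=-\pak\HH+\text{error}$ and $\pak\cdot\VV=\tpl(\diva v)+\text{error}$, with $\HH|_\Gamma=-\tpl\ek_\beta\ak^{3\beta}\p_3h$. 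Testing with $\VV$, integrating over $\Omega$, and integrating $\pak$ by parts produces \eqref{AGU control}: the interior term $\int_\Omega \HH\,\tpl(\diva v)$ yields $-\frac12\frac{d}{dt}\|\tpl h\|_0^2$ plus a commutator term handled after a time integration, while the boundary term combines with the contribution of $\tpl\psi$ (through the first equation $\p_t\eta=v+\psi$) to produce $\frac12\frac{d}{dt}\int_\Gamma\p_3 h\,|\ak^{3\alpha}\tpl\lkk\eta_\alpha|^2\dS$ — the time derivative of $\EE_{TB}$ (up to a controllable term from $\p_t(\p_3 h)$) — together with the higher-order boundary pieces in the second and third lines of \eqref{1.26}, which are precisely cancelled by the definition of $\psi$ in \eqref{psi} up to lower-order terms and a low-frequency remainder estimated by Bernstein \eqref{bern1}. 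The remaining genuinely $\kk$-dependent danger — the $\psi$-term in the third line of \eqref{1.26}, which would cost $\kk^{-1/2}$ via \eqref{non kk uniform} — is absorbed because the structure of $\psi$ supplies a compensating $\sqrt\kk$ through $|\TP\psi|_{L^\infty}\lesssim\sqrt\kk\,P(\EE_\kk)$, obtained from \eqref{lkk3} and the Sobolev chain $W^{1,4}\hookrightarrow L^\infty$, $H^{0.5}\hookrightarrow L^4$. Then $\|\TP^4 v\|_0$ and $\|\TP^4 h\|_0$ are recovered from $\|\VV\|_0$, $\|\HH\|_0$ by adding back $\tpl\ek\cdot\pak(\cdot)$, whose norms are lower order (Lemma \ref{lem AGU controls 4 tangential}).

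After the top-order tangential estimate, I would fill in the remaining components of $\EE_\kk$: (i) the pure time-derivative and mixed space-time norms $\sum_k\|\p_t^{4-k}v\|_k^2+\sum_k\|\p_t^{4-k}h\|_k^2$ with at least one time derivative, estimated by commuting $\p_t^{4-k}\TP^{k}$ (or using the wave equation \eqref{h wave pre}/\eqref{eq h}) and using $\diva v=-e'(h)\p_t h$ to trade a divergence for a time derivative; (ii) the full spatial norms $\|\p_t^{4-k}v\|_k$, $\|h\|_{\hc}$ recovered from tangential plus divergence and curl control via the Hodge decomposition Lemma \ref{hodge} — the curl of $v$ is controlled by propagating the vorticity equation $\p_t(\curl_{\ak}v)=\text{l.o.t.}$ in $H^3$, and the normal derivatives via $\diva v$ and the interior elliptic estimate Lemma \ref{GLL} applied to $h$ (keeping $\p h$ only in $L^\infty$, as forced by gravity and the unbounded domain); (iii) $\|\eta\|_{\hc}^2$ directly from $\p_t\eta=v+\psi$ by time integration. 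Assembling everything gives $\frac{d}{dt}\EE_\kk\lesssim P(\EE_\kk)$ uniformly in $\kk$, and a continuity/bootstrap argument yields a $\kk$-independent $T>0$ with \eqref{Ekk0}. I expect the main obstacle to be the top-order boundary analysis: making the cancellation among the $\tpl\psi$-term in \eqref{AGU bdy}, the higher-order terms in \eqref{1.26}, and the definition of $\psi$ \eqref{psi} completely explicit, while simultaneously keeping every resulting error both low-order \emph{and} uniform in $\kk$ — in particular the delicate $\sqrt\kk$ versus $\kk^{-1/2}$ balancing for the $\psi$ contribution and the low-frequency remainder left by $\mathbb{P}$.
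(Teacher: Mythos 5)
Your plan reproduces the paper's own argument in all essential respects: you use the Alinhac good unknowns $\HH,\VV$ to avoid the loss from $\tpl\ak$, test the good-unknown system, identify the $-\tfrac12\frac{d}{dt}\|\tpl h\|_0^2$ term from $\int_\Omega\HH\,\tpl(\diva v)$, generate the Taylor-sign boundary energy $\tfrac12\frac{d}{dt}\int_\Gamma\p_3h|\ak^{3\alpha}\tpl\lkk\eta_\alpha|^2$, cancel the remaining boundary terms against the $\tpl\psi$ contribution via the structure of \eqref{psi}, handle the low-frequency remainder with Bernstein, absorb the $\kk^{-1/2}$ from \eqref{non kk uniform} by the compensating $\sqrt\kk$ coming from \eqref{lkk3} and the $W^{1,4}\hookrightarrow L^\infty$ chain, and then close via div-curl, the wave equation for $h$, time-derivative commutations, and Gr\"onwall. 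The one small departure from the paper is your appeal to the elliptic estimate Lemma \ref{GLL} for the normal derivatives of $h$: this is fine provided you stop at $r\leq 3$ (which suffices to recover $\|\p^2 h\|_{H^2}$), since for $r\geq 4$ Lemma \ref{GLL} carries a $\|\cp\p\ek\|_{r-1}$ factor that costs $\kk^{-1}$ by tangential smoothing and would wreck uniformity; the paper instead simply inverts the wave operator pointwise, solving for $\p_{33}h$ from $E^{33}\p_{33}h = e'(h)\p_t^2h + \text{l.o.t.}$ and iterating, which avoids the elliptic machinery altogether and makes the $\kk$-uniformity manifest.
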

Proposition \ref{uniformkk} is a direct consequence of the following proposition:
\begin{prop}
Let $\EE_{\kk}$ be defined as above. Then it holds that
\begin{align}
\EE_{\kk}(t) \leq P(\|v_0\|_4, \|h_0\|_{\hc})+\int_0^t P(\EE_\kk(\tau))\,d\tau, \q \forall t\in[0,T]
\label{energy int form}
\end{align}
provided the following a priori assumptions hold
\begin{align}
\label{taylor2} -\p_3 h(t)\geq \frac{c_0}{2}   &~~~\text{on }\Gamma, \\
\label{Jkk1} \|\tilde{J}(t)-1\|_3\leq \epsilon &~~~\text{in }\Omega, \\
\label{akk1} \|\text{Id}-\ak(t)\|_{3}\leq\epsilon &~~~\text{in }\Omega,
\end{align} 
where $\tilde{J}:=\det(\p\ek)$ and we use $\epsilon>0$ to denote the sufficiently small number which appears here and the $\epsilon$-Young inequality.
\end{prop}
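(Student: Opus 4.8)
The plan is to run a high-order energy estimate on the approximate system \eqref{app1}, treating separately the interior tangential estimate, the curl estimate, the time-derivative estimate, and the div-curl elliptic recovery of full spatial regularity, then collect everything into the integral inequality. All constants are allowed to depend on $\EE_\kk$ through a generic polynomial $P$, and the a priori assumptions \eqref{taylor2}--\eqref{akk1} ensure that $\ak$, $\tilde J$, $e'(h)$, and $-\p_3 h$ stay in the range where Lemma \ref{GLL}, Lemma \ref{hodge}, and the positivity of the boundary energy are usable. First I would record the standard consequences: $\|\eta(t)\|_{\hc}^2 \leq P_0 + \int_0^t P(\EE_\kk)$ by integrating $\p_t\eta = v+\psi$ and using $|\psi|\lesssim \sqrt\kappa P(\EE_\kk)$ (Lemma \ref{tgsmooth}, \eqref{lkk3} and the harmonic trace Lemma \ref{harmonictrace}); similarly $\|h(t)\|_{\hc}^2$, $\|\p_t^{4-k} h\|_k$ for $k\leq 3$, and the lower-order $\p_t^j v$ norms follow from the evolution equations and the wave equation \eqref{h wave pre} once the top-order pieces are controlled.

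The core is the top-order tangential estimate. I would apply $\tpl = \TP^2\TL$ to the second and third equations of \eqref{app1}, rewrite $\tpl(\pak h)$ and $\tpl(\diva v)$ via the Alinhac good unknowns $\HH = \tpl h - \tpl\ek\cdot\pak h$ and $\VV = \tpl v - \tpl\ek\cdot\pak v$ (this is exactly \eqref{goodhh}--\eqref{goodvv}, whose commutator remainders $C(h), C(v)$ are bounded by $P(\EE_\kk)$), obtaining $\p_t\VV = -\pak\HH + \text{error}$ and $\pak\cdot\VV = \tpl(\diva v) + \text{error}$ with $\HH = -\tpl\ek_\beta\ak^{3\beta}\p_3 h$ on $\Gamma$. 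Testing with $\VV$, integrating over $\Omega$, and integrating $\pak$ by parts produces \eqref{AGU control}: the interior term $\int_\Omega \HH\,\tpl(\diva v) = -\tfrac12\tfrac{d}{dt}\|\tpl h\|_0^2 + \int_\Omega(\tpl\ek\cdot\nab_{\ak}h)(\tpl\p_t h)$, where the second term is handled after a time integration and an integration by parts in $\TP$ (the $\eqref{K*}$--$\eqref{tgK}$ mechanism alluded to in the overview), and the boundary term is \eqref{AGU bdy}. On that boundary term I would substitute $\p_t\eta = v+\psi$ into $\VV$, move one $\lkk$ across in the symmetric way to extract $\tfrac12\tfrac{d}{dt}\int_\Gamma \p_3 h\,|\ak^{3\alpha}\tpl\lkk\eta_\alpha|^2$ (which, using \eqref{taylor2}, is $\gtrsim \tfrac{d}{dt}\EE_{TB}$ up to a $\int_0^t P(\EE_\kk)$ coming from $\p_t(\p_3 h)$), and show that the two leftover quadratic-in-$\tpl\ek$ terms in \eqref{1.26} are cancelled by the $-\int_\Gamma \p_3 h\,\ak^{3\alpha}\ak^{3\beta}\tpl\ek_\beta\tpl\psi\,dS$ contribution, using the precise definition of $\psi$ in \eqref{psi}, up to the $(\text{Id}-\mathbb{P})$ low-frequency piece controlled by Bernstein \eqref{bern1}, and up to the $\lkk^2\psi$ term that is absorbed via \eqref{non kk uniform} times the extra $\sqrt\kappa$ from $|\TP\psi|_{L^\infty}\lesssim\sqrt\kappa\,P(\EE_\kk)$. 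The outcome is $\tfrac{d}{dt}(\EE_{TI}+\EE_{TB}) \leq P(\EE_\kk)$, hence $\sup_{[0,t]}(\EE_{TI}+\EE_{TB})\leq P_0 + \int_0^t P(\EE_\kk)$, and then $\|\tpl v\|_0^2 = \|\TP^4 v\|_0^2$ is recovered from $\|\VV\|_0^2 + \|\TP^2\TL\ek\cdot\nab_{\ak}v\|_0^2$ as in Lemma \ref{lem AGU controls 4 tangential}.

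Next I would close the remaining directions. The curl estimate: applying $\curl_{\ak}$ to $\p_t v = -\pak h - ge_3$ kills the gradient, so $\curl_{\ak} v$ solves a transport-type equation with right-hand side controlled by $P(\EE_\kk)$, giving $\|\curl v(t)\|_3 \leq P_0 + \int_0^t P(\EE_\kk)$; likewise $\|\diva v\|_3 = \|e'(h)\p_t h\|_3$ is under control from the already-estimated time derivatives of $h$ and \eqref{e(h) cond}. The time-derivative pieces $\|\p_t^{4-k}v\|_k$, $k\leq 3$, and $\|\p_t^{4-k}h\|_k$, $k\leq 3$, come from commuting $\p_t^{j}$ through \eqref{app1} and through the wave equation \eqref{h wave pre} (using \eqref{taylor2} for the boundary terms and the standard energy identity for the $\ak$-wave operator). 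Finally, full spatial regularity of $v$ is assembled by the Hodge decomposition Lemma \ref{hodge}: $\|v\|_4 \lesssim \|v\|_0 + \|\curl v\|_3 + \|\diva v\|_3 + |v\cdot N|_{3.5}$, where the normal trace is tied to $\ak^{3\alpha}\tpl\lkk\eta_\alpha$ on $\Gamma$, i.e.\ to $\EE_{TB}$, modulo mollifier error; and $\|\nab_{\ak} h\|_{\dH^4}$ (needed to finish $\|h\|$-type lower order terms and the wave-energy source) comes from the elliptic estimate Lemma \ref{GLL} with $\|\lap_{\ak}h\|_3 \leq \|\p_t^2 e(h)\|_3 + \|(\nab_\mu u^\nu)(\nab_\nu u^\mu)\|_3 \leq P(\EE_\kk)$. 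Summing the four groups of estimates gives $\EE_\kk(t) \leq P(\|v_0\|_4,\|h_0\|_{\hc}) + \int_0^t P(\EE_\kk(\tau))\,d\tau$.

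I expect the main obstacle to be the boundary analysis of \eqref{AGU bdy}: verifying that the cancellation encoded in the definition of $\psi$ genuinely removes \emph{both} top-order quadratic-in-$\tpl\ek$ terms in \eqref{1.26}, and that every residual term is either a perfect time derivative contributing with the right (positive, by \eqref{taylor2}) sign to $\EE_{TB}$, a low-frequency term absorbable by Bernstein, or a term carrying an explicit $\sqrt\kappa$ that defeats the $\kappa^{-1/2}$ loss in \eqref{non kk uniform}; keeping all of this genuinely uniform in $\kappa$ is the delicate point, since naive estimates of $\|\lkk\eta\|_{\dot H^4(\Gamma)}$ and $\|\lkk^2\eta\|_{\dot H^4(\Gamma)}$ would blow up as $\kappa\to 0$.
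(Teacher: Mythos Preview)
Your overall architecture matches the paper's: Alinhac good unknowns for the full-spatial tangential estimate, the $\psi$-induced cancellation on the boundary (including the $\sqrt{\kk}$ gain on $|\TP\psi|_{L^\infty}$ to kill the $\kk^{-1/2}$ loss), curl via transport, divergence via $\diva v=-e'(h)\p_t h$, and a Hodge decomposition to assemble $\|v\|_4$. You also correctly flag the boundary bookkeeping as the delicate point, and the paper confirms that this is where the work is (Section~\ref{bdrycancel}).

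There are, however, two concrete gaps. First, your handling of the normal trace in the Hodge step is wrong: $|v\cdot N|_{3.5}$ is \emph{not} tied to the boundary energy $|\ak^{3\alpha}\tpl\lkk\eta_\alpha|_0^2$; that term arises only inside the good-unknown boundary integral and involves $\eta$, not $v$. The paper instead uses the normal trace lemma (Lemma~\ref{normaltrace}) to reduce $|\TP^3(v\cdot N)|_{0.5}\lesssim\|\TP^4 v\|_0+\|\TP^3\dive v\|_0$, i.e.\ back to interior tangential and divergence quantities already under control. Second, your route to $\|h\|_{\hc}$ via the elliptic estimate Lemma~\ref{GLL} does not close: $\|\nab_{\ak}h\|_{\dH^4}$ would require $\|\lap_{\ak}h\|_3\sim\|e'(h)\p_t^2 h\|_3$, and $\|\p_t^2 h\|_3$ is one derivative beyond what $\EE_\kk$ carries (only $\|\p_t^2 h\|_2$ is available). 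The paper avoids this by running the wave-equation energy directly: commute $\dd^3$ (with $\dd\in\{\TP,\p_t\}$) through \eqref{waveh}, obtain control of $\|\dd^3\p_t h\|_0$ and $\|\p\dd^3 h\|_0$, and then recover the remaining normal derivatives $\p_{33}h,\p_{333}h,\ldots$ by solving \eqref{waveh} algebraically for $\p_{33}h$ in terms of lower-normal quantities (Section~\ref{lessnormal}). A minor point: the $K^*$ term is handled by integrating $\p_t$ (not $\TP$) by parts in time, and $\|\psi\|_4$ is only $\lesssim P(\EE_\kk)$ without a $\sqrt\kk$ prefactor---the $\sqrt\kk$ appears specifically for $|\TP\psi|_{L^\infty}$ via \eqref{lkk3}.
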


\begin{rmk}
It suffices to show that \eqref{energy int form} holds true when $t=T$. 
Also, \eqref{energy int form} can in fact be reduced to 
\begin{align}
\EE_{\kk}(T) \leq \EE_\kk(0)+\int_0^T P(\EE_\kk(t))\,dt.
\end{align}
In \cite{luo2018ww} we are able to prove that there exists initial data satisfying the compatibility condition \eqref{c'cond} up to order $5$ such that $\EE_\kk(0) \leq P(\|v_0\|_4, \|h_0\|_{\hc})$ holds. For notation simplicity we define $\PP_0:=P(\|v_0\|_4, \|h_0\|_{\hc})$.
\end{rmk}

\subsection{Estimates for the flow map and correction term}

First we bound the flow map and the correction term together with their smoothed version by the quantities in $\EE_{\kk}$. The following estimates will be repeatedly use in this section.

\begin{lem}\label{etapsi}
Let $(v,h,\eta)$ be the solution to \eqref{app1}. Then we have
\begin{align}
\label{etaLinfty}  \|\p \ek\|_{L^\infty}&\lesssim \|\p\eta\|_{L^{\infty}}, \\
\label{eta4} \|\p^2\ek\|_2&\lesssim \|\p^2\eta\|_2, \\
\label{psi4} \|\psi\|_4&\lesssim P(\|\p\eta\|_{L^{\infty}},\|\p^2\eta\|_2,\|v\|_3), \\
\label{psit4}\|\p_t \psi\|_4&\lesssim P(\|\p\eta\|_{L^{\infty}},\|\p^2\eta\|_2,\|v\|_4,\|\p_t v\|_3),\\
\label{psitt3}\|\p_t^2 \psi\|_3&\lesssim P(\|\p\eta\|_{L^{\infty}},\|\p^2\eta\|_2,\|v\|_4, \|\p_tv\|_3, \|\p_t^2 v\|_2), \\
\label{psittt2}\|\p_t^3\psi\|_2&\lesssim P(\|\p\eta\|_{L^{\infty}},\|\p^2\eta\|_2,\|v\|_4, \|\p_tv\|_3, \|\p_t^2 v\|_2, \|\p_t^3v\|_{1}).
\end{align}
and
\begin{align}
\label{etat4} \|\p_t\ek\|_4\lesssim\|\p_t\eta\|_4&\lesssim P(\|\p\eta\|_{L^{\infty}},\|\p^2\eta\|_2,\|v\|_4) , \\
\label{etatt3}\|\p_t^2\ek\|_3\lesssim\|\p_t^2\eta\|_3&\lesssim P(\|\p\eta\|_{L^{\infty}},\|\p^2\eta\|_2,\|v\|_4,\|\p_t v\|_3) , \\
\label{etattt2}\|\p_t^3\ek\|_2\lesssim\|\p_t^3\eta\|_2&\lesssim P(\|\p\eta\|_{L^{\infty}},\|\p^2\eta\|_2,\|v\|_4,\|\p_t v\|_3,\|\p_t^2 v\|_2), \\
\label{etatttt1}\|\p_t^4\ek\|_1\lesssim\|\p_t^4\eta\|_1&\lesssim  P(\|\p\eta\|_{L^{\infty}},\|\p^2\eta\|_2,\|v\|_4, \|\p_tv\|_3, \|\p_t^2 v\|_2, \|\p_t^3v\|_{1}). 
\end{align}
\end{lem}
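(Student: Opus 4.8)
The plan is to establish the ten bounds in three groups, in the order: the spatial bounds \eqref{etaLinfty}--\eqref{eta4} on $\ek$, then the bounds \eqref{psi4}--\eqref{psittt2} on $\psi$ and its time derivatives, and finally the time-derivative bounds \eqref{etat4}--\eqref{etatttt1} on $\eta$ (and hence on $\ek$), which will reduce to the previous group via the first equation of \eqref{app1}. The spatial bounds on $\ek$ are immediate: since $\lkk$ is convolution in $(y_1,y_2)$ it commutes with every $\p$, so $\p\ek=\lkk^2\p\eta$ and $\p^2\ek=\lkk^2\p^2\eta$; then \eqref{etaLinfty} follows from Young's inequality together with $\zeta_\kk\ge0$, $\int\zeta_\kk=1$, and \eqref{eta4} from the interior smoothing estimate \eqref{lkk11} of Lemma \ref{tgsmooth}.

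For $\psi$ itself: since $\psi$ is the harmonic extension of its Dirichlet data, Lemma \ref{harmonictrace} reduces $\|\psi\|_4$ to $|\psi|_{3.5}$. Writing the boundary datum in \eqref{psi} as $\TL^{-1}\mathbb{P}g$ with $g:=\TL\eta_\beta\ak^{i\beta}\TP_i\lkk^2 v-\TL\lkk^2\eta_\beta\ak^{i\beta}\TP_i v$, the Littlewood--Paley projection $\mathbb{P}$ makes the symbol of $\TL^{-1}\mathbb{P}$ smooth and decaying, so by Bernstein's inequality \eqref{bern2} one has $|\TL^{-1}\mathbb{P}g|_{3.5}\lesssim|g|_{\dot H^{1.5}(\Gamma)}$ (plus $|g|_{L^2(\Gamma)}$, which is finite). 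It then remains to estimate the cubic expression $g$ on $\Gamma=\R^2$: the trace theorem $H^2(\Omega)\hookrightarrow H^{1.5}(\Gamma)$ shows that the factor $\TL\eta$ (resp. $\TL\lkk^2\eta$), which costs one trace derivative on top of the two tangential derivatives in $\TL$, requires only $\|\p^2\eta\|_2$, i.e. $\|\eta\|_\hc$; the a priori bound \eqref{akk1} controls $\ak$ (a smooth function of $\p\ek=\lkk^2\p\eta$) in $L^\infty$ and in $H^{2.5}(\Gamma)$; and the factor $\TP_i\lkk^2 v$ is controlled by $\|v\|_3$ via \eqref{lkk1} and trace. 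The Sobolev product estimates \eqref{product} on $\R^2$ then close the product, giving \eqref{psi4}.

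The time-derivative bounds \eqref{psit4}--\eqref{psittt2} are obtained by induction on the number of time derivatives. Each $\p_t$ falling on $g$ either hits $\eta$ (or $\lkk^2\eta$), producing $v+\psi$ by the first equation of \eqref{app1}; or hits $v$, $\p_t^j v$, raising the time order of $v$; or hits $\ak$, producing $\p_t\ak=-\ak(\lkk^2\p(v+\psi))\ak$. The resulting right-hand side thus contains only strictly-lower-time-order copies of $\psi$, which are already bounded by \eqref{psi4}--\eqref{psitt3}, together with the appropriate $\p_t^j v$; tracking which factor absorbs the most time derivatives (hence loses the most spatial regularity, via $\|\TP\p_t^j f\|\lesssim|\p_t^j f|$ on $\Gamma$) yields exactly the stated dependence on $\|v\|_4,\|\p_t v\|_3,\|\p_t^2 v\|_2,\|\p_t^3 v\|_1$. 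For the flow map, $\p_t\eta=v+\psi$ gives $\p_t^k\eta=\p_t^{k-1}v+\p_t^{k-1}\psi$, so \eqref{etat4}--\eqref{etatttt1} follow by combining the energy norms of $\p_t^{k-1}v$ with \eqref{psi4}--\eqref{psittt2}, and $\|\p_t^k\ek\|_s=\|\lkk^2\p_t^k\eta\|_s\lesssim\|\p_t^k\eta\|_s$ by \eqref{lkk11}.

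The main obstacle here is not any single estimate but the bookkeeping forced by the unbounded domain: since $\p\eta\notin L^2(\Omega)$, the flow map may enter only through $\|\p\eta\|_{L^\infty}$ and $\|\p^2\eta\|_2$, and one must verify that the trace-and-product analysis of the cubic boundary datum $g$ and of its time derivatives closes with precisely the claimed orders — in particular that $\mathbb{P}$ genuinely rescues the low-frequency part of $\TL^{-1}$ (as in the remark after \eqref{psi}) and that no hidden extra derivative of $v$ or of the flow map is required. Throughout, the $\ak$-dependence is absorbed into the generic polynomial $P$ using the a priori assumptions \eqref{Jkk1} and \eqref{akk1}.
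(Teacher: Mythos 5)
Your overall strategy coincides with the paper's: establish \eqref{etaLinfty}--\eqref{eta4} from $\lkk$ commuting with $\p$ and \eqref{lkk11}; bound $\psi$ and its time derivatives by solving the half-space harmonic Dirichlet problem \eqref{psi}, reducing $\|\p_t^k\psi\|_{4-k}$ to a boundary Sobolev norm of the datum via Lemma \ref{harmonictrace} and then to an interior norm via trace and the Bernstein inequality \eqref{bern2}; and finally obtain \eqref{etat4}--\eqref{etatttt1} from $\p_t^{k+1}\eta=\p_t^k v+\p_t^k\psi$ and \eqref{lkk11}.

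However, your phrase ``tracking which factor absorbs the most time derivatives (hence loses the most spatial regularity, via $\|\TP\p_t^j f\|\lesssim|\p_t^j f|$ on $\Gamma$) yields exactly the stated dependence'' glosses over the one place where the straightforward trace-and-product route actually \emph{fails}, namely the $k=3$ case \eqref{psittt2}. There the elliptic-plus-Bernstein reduction gives $\|\p_t^3\psi\|_2\lesssim|\p_t^3\psi|_{1.5}\lesssim|\mathbb{P}\p_t^3 g|_{\dot H^{-0.5}(\Gamma)}$, and the top-order terms in $\p_t^3 g$ are of the form $\TL\eta_\beta\,\ak^{i\beta}\,\TP_i\p_t^3\lkk^2 v$ and $(\p_t^3\TL\eta_\beta)\,\ak^{i\beta}\,\TP_i v$. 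In the available energy, $\p_t^3 v$ and $\p_t^3\eta\sim\p_t^2 v+\p_t^2\psi$ lie only in $H^1(\Omega)$; a naive ``Bernstein $+$ trace'' step, $|\cdot|_{-0.5}\le|\cdot|_{0.5}\lesssim\|\cdot\|_1$, would require $\TP\p_t^3 v\in H^1(\Omega)$, i.e.\ $\p_t^3 v\in H^2(\Omega)$, which you do not have. The paper resolves this by exploiting the self-duality $\dot H^{-0.5}(\R^2)=(\dot H^{0.5}(\R^2))^*$: pairing against a test function $\phi\in\dot H^{0.5}$, moving a half tangential derivative across, and estimating $|\TL\eta\,\ak\,\phi|_{\dot H^{0.5}}$ with the Kato--Ponce product inequality \eqref{product} (see \eqref{XY}--\eqref{X3}). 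This duality argument is the nontrivial core of the lemma's proof, and your proposal neither identifies the failure of the naive route nor supplies the replacement argument. Everything else in your sketch is correct and matches the paper.
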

\begin{proof}
First, \eqref{etaLinfty} and \eqref{eta4} follow from \eqref{lkk11}, i.e., $\|\p \ek\|_{L^\infty}=\|\lkk^2\p \eta\|_{L^\infty}\lesssim\|\p \eta\|_{L^\infty}$, $\|\p^2\ek\|_2=\|\lkk^2\p^2\eta\|_2\lesssim\|\p^2\eta\|_2$. To bound $\p_t^k\ek$, it suffices to bound the same norm of $\p_t^k\eta$ and then apply \eqref{lkk11} again. From the first equation of \eqref{app1}, one has $\p_t^{k+1}\eta=\p_t^k v+\p_t^k \psi$, so the estimates \eqref{etat4}-\eqref{etatttt1} automatically holds once we prove \eqref{psi4}-\eqref{psittt2}.

Commuting time derivatives through \eqref{psi}, we get the equations for $\p_t^k\psi~(k=0,1,2,3,4)$:
\begin{equation}\label{psitk}
\begin{cases}
\Delta \p_t^k\psi=0  &~~~\text{in }\Omega, \\
\p_t^k\psi=\TL^{-1}\mathbb{P}\p_t^k\left(\TL\eta_{\beta}\ak^{i\beta}\TP_i\lkk^2 v-\TL\lkk^2\eta_{\beta}\ak^{i\beta}\TP_i v\right) &~~~\text{on }\Gamma.
\end{cases}
\end{equation}
By the standard elliptic estimates, Sobolev trace lemma and Bernstein inequality \eqref{bern2} in Lemma \ref{bernstein}, we can get 
\begin{equation}\label{psi040}
\begin{aligned}
\|\psi\|_{4}&\lesssim\left|\TL^{-1}\mathbb{P}\left(\TL\eta_{\beta}\ak^{i\beta}\TP_i\lkk^2 v-\TL\lkk^2\eta_{\beta}\ak^{i\beta}\TP_i v\right)\right|_{3.5} \\
&\lesssim \left|\TL\eta_{\beta}\ak^{i\beta}\TP_i\lkk^2 v-\TL\lkk^2\eta_{\beta}\ak^{i\beta}\TP_i v\right|_{1.5} \\
&\lesssim \|\TL\eta_{\beta}\ak^{i\beta}\TP_i\lkk^2 v-\TL\lkk^2\eta_{\beta}\ak^{i\beta}\TP_i v\|_{2} \\
&\lesssim \|\p^2\eta\|_2\|\ak\|_{L^{\infty}}\|v\|_3\leq P(\|\p^2\eta\|_2,\|\p\eta\|_{L^{\infty}},\|v\|_3).
\end{aligned}
\end{equation}
Also, when $k=1,2,3$, one has 
\begin{equation}\label{psit40}
\begin{aligned}
\|\p_t\psi\|_{4}&\lesssim\left|\TL^{-1}\p_t\mathbb{P}\left(\TL\eta_{\beta}\ak^{i\beta}\TP_i\lkk^2 v-\TL\lkk^2\eta_{\beta}\ak^{i\beta}\TP_i v\right)\right|_{3.5} \\
&\lesssim \left|\p_t\left(\TL\eta_{\beta}\ak^{i\beta}\TP_i\lkk^2 v-\TL\lkk^2\eta_{\beta}\ak^{i\beta}\TP_i v\right)\right|_{1.5} \\
&\lesssim \|\p_t(\TL\eta_{\beta}\ak^{i\beta}\TP_i\lkk^2 v-\TL\lkk^2\eta_{\beta}\ak^{i\beta}\TP_i v)\|_{2} \\
&\lesssim  P(\|\p^2\eta\|_2,\|\p\eta\|_{L^{\infty}}, \|v\|_4, \|\p_t v\|_3),
\end{aligned}
\end{equation} 

\begin{equation}\label{psitt30}
\begin{aligned}
\|\p_t^2\psi\|_{3}&\lesssim\left|\TL^{-1}\p_t^2\mathbb{P}\left(\TL\eta_{\beta}\ak^{i\beta}\TP_i\lkk^2 v-\TL\lkk^2\eta_{\beta}\ak^{i\beta}\TP_i v\right)\right|_{2.5} \\
&\lesssim \left|\p_t^2\left(\TL\eta_{\beta}\ak^{i\beta}\TP_i\lkk^2 v-\TL\lkk^2\eta_{\beta}\ak^{i\beta}\TP_i v\right)\right|_{0.5} \\
&\lesssim \|\p_t^2(\TL\eta_{\beta}\ak^{i\beta}\TP_i\lkk^2 v-\TL\lkk^2\eta_{\beta}\ak^{i\beta}\TP_i v)\|_{1} \\
&\lesssim P(\|\p^2\eta\|_2,\|v\|_4, \|\p\eta\|_{L^{\infty}},\|\p_t v\|_3,\|\p_t^2v\|_2),
\end{aligned}
\end{equation}
and
\begin{equation}\label{psittt20}
\begin{aligned}
\|\p_t^3\psi\|_{2}&\lesssim\left|\TL^{-1}\p_t^3\mathbb{P}\left(\TL\eta_{\beta}\ak^{i\beta}\TP_i\lkk^2 v-\TL\lkk^2\eta_{\beta}\ak^{i\beta}\TP_i v\right)\right|_{1.5} \\
&\lesssim \left|~\mathbb{P}\p_t^3\left(\TL\eta_{\beta}\ak^{i\beta}\TP_i\lkk^2 v-\TL\lkk^2\eta_{\beta}\ak^{i\beta}\TP_i v\right)\right|_{-0.5}\\
\end{aligned}
\end{equation}
where in the last step we apply the Bernstein's inequality \eqref{bern2}.

Combining with $\p_t^{k+1}\eta=\p_t^k v+\p_t^k \psi$, \eqref{etat4}, \eqref{etatt3} and \eqref{etattt2} directly follows from \eqref{psit40} and \eqref{psitt30}, respectively. When $k=3$, one has to be cautious because the leading order term in \eqref{psittt20} is of the form $(\p_t^3\TL\eta)\ak\TP v$ and $\TL\eta\ak(\p_t^3\TP v)$ which can only be bounded in $L^2(\Omega)$ by the quantites in $\EE_{\kk}$ and thus loses control on the boundary. To control these terms on the boundary, we have to use the fact that $\dot{H}^{0.5}(\R^2)=(\dot{H}^{0.5}(\R^2))^*$.

First we separate them from other lower order terms which has $L^2(\Gamma)$ control.
\begin{equation}\label{XY}
\begin{aligned}
&~~~~\mathbb{P}\p_t^3\left(\TL\eta_{\beta}\ak^{i\beta}\TP_i\lkk^2 v-\TL\lkk^2\eta_{\beta}\ak^{i\beta}\TP_i v\right) \\
&=\mathbb{P}\underbrace{\left(\p_t^3 \TL\eta_{\beta}\ak^{i\beta}\TP_i\lkk^2 v-\p_t^3\TL\lkk^2\eta_{\beta}\ak^{i\beta}\TP_i v+\TL\eta_{\beta}\ak^{i\beta}\p_t^3\TP_i\lkk^2 v-\TL\lkk^2\eta_{\beta}\ak^{i\beta}\p_t^3\TP_i v\right)}_{\text{leading order terms=:X}}+\mathbb{P}Y.
\end{aligned}
\end{equation}
The control of $Y$ is straightforward by using Sobolev trace lemma and \eqref{etat4}, \eqref{etatt3},
\begin{equation}\label{Y}
\begin{aligned}
|\mathbb{P}Y|_{-0.5}&\leq|\mathbb{P}Y|_{0.5}\lesssim\|Y\|_{1} \\
&\lesssim P(\|\p_t^2\eta\|_{2.5}, \|\p_t \eta\|_{3.5}, \|\p_t^2 \ak\|_{1.5},\|\p_t^2 v\|_{1.5}, \|\p_t v\|_{2.5}) \\
&\lesssim P(\|\p^2\eta\|_2, \|v\|_4,\|\p_t v\|_3,\|\p_t^2 v\|_2).
\end{aligned}
\end{equation}
As for the $|\mathbb{P} X|_{-0.5}$ term, we first use the Bernstein inequality \eqref{bern2} to get $|\mathbb{P} X|_{-0.5}\approx |\mathbb{P}X|_{\dot{H}^{-0.5}}\lesssim |X|_{\dot{H}^{-0.5}}$. Then the duality between $\dot{H}^{-0.5}$ and $\dot{H}^{0.5}$ yields that for any test function $\phi\in \dot{H}^{0.5}(\R^2)$ with $|\phi|_{\dot{H}^{0.5}}\leq 1$, one has
\begin{equation}\label{X1}
\begin{aligned}
\langle \TL\eta_{\beta}\ak^{i\beta}\p_t^3\TP_i\lkk^2 v, \phi\rangle &=\langle \p_t^3\TP_i\lkk^2 v, \TL\eta_{\beta}\ak^{i\beta}\phi\rangle \\
&=\langle \TP_i^{0.5}\p_t^3\lkk^2 v,\TP_i^{0.5}(\TL\eta_{\beta}\ak^{i\beta}\phi)\rangle \\
&\lesssim|\p_t^3\lkk^2 v|_{\dot{H}^{0.5}}|\TL\eta\ak\phi|_{\dot{H}^{0.5}} \\
&\lesssim\|\p_t^3 v\|_{1}(|\phi|_{\dot{H}^{0.5}}|\TL\eta\ak|_{L^{\infty}}+|\TL\eta\ak|_{\dot{W}^{0.5,4}}|\phi|_{L^4})\\
&\lesssim\|\p_t^3 v\|_1(\|\p^2\eta\|_2\|a\|_{L^{\infty}})|\phi|_{\dot{H}^{0.5}}.
\end{aligned}
\end{equation} Here we integrate 1/2-order tangential derivative on $\Gamma$ by part in the second step, and then apply trace lemma to control $|\p_t^3 \lkk^2 v|_{\dot{H}^{0.5}}$ and Kato-Ponce product estimate \eqref{product} to bound $|\TL\eta\ak\phi|_{\dot{H}^{0.5}}$. Taking supremum over all $\phi\in H^{0.5}(\R^2)$ with $|\phi|_{\dot{H}^{0.5}}\leq 1$, we have by the definition of $\dot{H}^{0.5}$-norm that 
\begin{equation}\label{X0}
|\TL\eta_{\beta}\ak^{i\beta}\p_t^3\TP_i\lkk^2 v|_{\dot{H}^{-0.5}}\lesssim P(\|\p^2\eta\|_2,\|\p\eta\|_{L^{\infty}},\|\p_t^3 v\|_1).
\end{equation}
Similarly as above, we have 
\begin{align}
\label{X2} |\TL\lkk^2\eta_{\beta}\ak^{i\beta}\p_t^3\TP_i v|_{\dot{H}^{-0.5}}&\lesssim P(\|\p^2\eta\|_2,\|\p_t^3 v\|_1), \\
\label{X3} |\p_t^3 \TL\eta_{\beta}\ak^{i\beta}\TP_i\lkk^2 v-\p_t^3\TL\lkk^2\eta_{\beta}\ak^{i\beta}\TP_i v|_{\dot{H}^{-0.5}}&\lesssim P(\|\p_t^3\eta\|_2, \|\p\eta\|_{L^{\infty}}, \|v\|_3).
\end{align}
Combining \eqref{psittt20}-\eqref{X3} and the bound \eqref{etattt2} for $\p_t^3\eta$, we get 
\[
\|\p_t^3\psi\|_2\lesssim P(\|\p^2\eta\|_2,\|v\|_4, \|\p_tv\|_3, \|\p_t^2 v\|_2, \|\p_t^3v\|_{1}),
\]which is exactly \eqref{psittt2}. Hence, \eqref{etatttt1} directly follows from \eqref{psittt2} and $\p_t^4\eta=\p_t^3(v+\psi)$.

\end{proof}

\subsection{Estimates for the enthalpy $h$}\label{hapriori}

In this section we are going to control the Sobolev norm $\|\p_t^{4-k} h\|_k$ for $k=0,1,2,3$ and $\|h\|_{\hc}$. 


\subsubsection{Estimates for the first order derivatives of $h$} \label{lower order h}

We need to control $\|\p h\|_{L^{\infty}}$ and $\|\p_t h\|_0$. First, we have $$
\frac{\p h}{\p y^{\alpha}}=\delta^{\mu\alpha}\p_\mu h=\ak^{\mu\alpha}\p_{\mu} h+(\delta^{\mu\alpha}-\ak^{\mu\alpha})\p_{\mu}h.
$$
 Invoking the a priori assumption \eqref{akk1} and the second equation in \eqref{app1}, we get
\begin{equation}
\|\p h\|_{L^{\infty}}\leq \|\pak h\|_{L^{\infty}}+\epsilon\|\p h\|_{L^{\infty}}\leq  (g+\|\p_t v\|_{L^{\infty}})+\epsilon\|\p h\|_{L^{\infty}}.
\end{equation} 
Therefore, for sufficiently small $\epsilon$, which can be achieved by choosing $T>0$ smaller if needed, we have
\begin{equation}\label{h infty}
\|\p h\|_{L^{\infty}}\lesssim g+\|\p_t v\|_{L^{\infty}}.
\end{equation}
Second, as for $\|\p_t h\|_0$, we use the third equation of \eqref{app1} and the physical assumption \eqref{e(h) cond} to get
\begin{equation}\label{ht L2}
\|\p_t h\|_{0}\lesssim \|\ak^{\mu\alpha}\p_{\mu}v_{\alpha}\|_0\lesssim\|\ak\|_{L^{\infty}}\|\p v\|_0\lesssim\|\eta\|_{\hc}^2\|\p v\|_{0}.
\end{equation}

\subsubsection{Estimates for the top order derivatives of $h$} \label{tangential h}

We take the Eulerian divergence (i.e., $\diva$) in the second equation of system \eqref{app1} and use the third equation of  \eqref{app1} to get a wave equation of $h$:
\begin{equation}\label{waveh}
\tilde{J}e'(h)\p_t^2 h-\p_{\nu}(E^{\nu\mu}\p_{\mu}h)=\underbrace{-\tilde{J}\p_t\ak^{\nu\alpha}\p_{\nu}v_{\alpha}}_{:=F}-\tilde{J}e''(h)(\p_t h)^2,
\end{equation}where $E^{\nu\mu}=\tilde{J}\ak^{\nu\alpha}\ak^{\mu}_{\alpha}$. Note that the matrix $E$ is symmetric and positive-definite thanks to  \eqref{Jkk1}.

Let $\dd=\p_t$ or $\TP$. Let $\dd^3=\TP^3,\TP^2\p_t,\TP\p_t^2$ or $\p_t^3$, i.e., all the 3rd-order tangential derivatives.  Applying $\dd^3$ to \eqref{waveh}, we get
\begin{equation}\label{waveh3}
\tilde{J} e'(h)\p_t^2\dd^3 h-\p_{\nu}(E^{\nu\mu}\dd^3\p_{\mu}h)=\dd^3 F\underbrace{-[\dd^3,\tilde{J} e'(h)]\p_t^2h+\dd^3(\tilde{J} e''(h)(\p_t h)^2)}_{F_3}+\p_{\nu}([\dd^3,E^{\nu\mu}]\p_{\mu}h).
\end{equation}
Multiplying \eqref{waveh3} by $\p_t\dd^3 h$, then integrating $\p_{\nu}$ by parts, we have
\begin{align}
&\label{h4energy0} ~~~~~~~\frac{1}{2}\frac{d}{dt}\io\tilde{J}  e'(h)|\dd^3\p_t h|^2+ E^{\mu\nu}\p_{\nu}\dd^3 h\p_\mu\dd^3 h\dy \\
&\label{h4e1}  =~~\frac{1}{2}\io \p_t E^{\nu\mu}\dd^3\p_\nu h\dd^3\p_\mu h\dy \\
&\label{h4e2} ~~~~+\io F_3 \p_t\dd^3 h\dy \\
&\label{h4e3} ~~~~+\io \dd^3 F\p_t\dd^3 h\dy \\
&\label{h4e4} ~~~~+\io \p_{\nu}([\dd^3,E^{\nu\mu}]\p_{\mu}h)\p_t\dd^3 h\dy.
\end{align}
\eqref{h4e1} can be directly bounded by the energy:
\begin{equation}\label{h4e11}
\eqref{h4e1}\lesssim\|\p_t E\|_{L^{\infty}}\|\dd^3\p h\|_0^2\lesssim P(\|\p\eta\|_{L^{\infty}},\|\p v\|_2,\|\p\psi\|_2,\|\dd^3\p h\|_0)
\end{equation}
To estimate \eqref{h4e2}, it suffices to bound $\|F_3\|_0$. The precise form of $F_3$ is
\[
F_3=\sum_{m=2}^5 \sum e^{(m)}(h)(\p_t^{i_1}\dd^{j_1} h)\cdots(\p_t^{i_m}\dd^{j_m} h),
\]where the second sum is taken over the set $\{i_1+\cdots+i_m=2,~j_1+\cdots+j_m=3,~1\leq i_m+j_m\leq 4\}$. Invoking the condition imposed on $e(h)$ (i.e., \eqref{e(h) cond}), one has
\begin{equation}\label{h4e21}
\sum_{\dd^3}\|F_3\|_0\lesssim P(\|\p_t^4 h\|_0,\|\p_t^3 h\|_1,\|\p_t^2h\|_2,\|\p_t h\|_3,\|\dd h\|_{L^{\infty}}).
\end{equation}
As for \eqref{h4e3}, one has $\dd^3 F=\dd^3(\tilde{J}\ak^{\nu\alpha}\ak^{\mu}_{\alpha})$. 
\begin{itemize}
\item When $\dd^3=\TP^3$, then $\|\dd^3(\tilde{J}\ak^{\nu\alpha}\ak^{\mu}_{\alpha})\|_0\lesssim P(\|\eta\|_{\hc})$.

\item When $\dd^3$ contains at least one time derivative, then 
\begin{align*}
\|\dd^3 F\|_0&=\|\dd^2\p_t(\tilde{J}\ak^{\nu\alpha}\ak^{\mu}_{\alpha})\|_0\lesssim\|\p_t J\|_2\|a\|_{L^{\infty}}^2+\|J\|_2\|\p_ta\|_2\|a\|_{L^{\infty}} \lesssim P(\|\p v\|_2,\|\eta\|_{\hc}).
\end{align*}
\end{itemize}
Therefore, 
\begin{equation}\label{h4e31}
\eqref{h4e3}\lesssim P(\|\eta\|_{\hc}, \|v\|_3)\|\p_t\dd^3 h\|_0
\end{equation}

Finally, one has to be cautious when controlling \eqref{h4e4}. The leading order term in $\p_{\nu}([\dd^3,E^{\nu\mu}]\p_{\mu}h)$ is $\p\dd^3 E$.  If $\dd^3=\TP^3$, then this term loses control in $L^2$. To avoid this problem, one can integrate $\p_{\nu}$ by parts, and then integrate $\p_t$ by parts in the time integral of \eqref{h4e4} to replace $\p_\nu$ falling on $E$ by $\p_t$. This is because $J$ and $\p_t J$ (also for $a$ and $\p_t a$) have the same spatial regularity. If $\dd^3$ contains at least one time derivative, then the $L^2$-norm of $\p\dd^3 E$ can be controlled directly thanks to the same reason above.

\begin{itemize}
\item $\dd^3$ contains at least one time derivative, i.e., $\dd^3=\dd^2\p_t$. Then
\begin{align*}
&~~~~\io \p_\nu ([\dd^2\p_t, E^{\nu\mu}]\p_\mu h)\p_t\dd^3 h\dy \\
&\lesssim \|[\dd^2\p_t, E^{\nu\mu}]\p_\mu h\|_1\|\p_t\dd^3 h\|_0 \\
&\lesssim (\|\dd^2\p_t E\|_1\|\p h\|_{L^{\infty}}+\|\dd^2 E\|_{L^{\infty}}\|\p_t h\|_2+\|\dd\p_t E\|_{L^{\infty}}\|\dd h\|_2+\|\dd E\|_{L^{\infty}}\|\dd\p_t h\|_1+\|\p_t E\|_{L^{\infty}}\|\dd^2 h\|_1)\|\p_t\dd^3 h\|_0.
\end{align*} So 
\begin{equation}\label{h4e41}
\sum_{\dd^3\backslash\{\TP^3\}}\eqref{h4e4}\lesssim P(\|\p^2\eta\|_2,\|\p\eta\|_{L^{\infty}},\|v\|_4, \|\p_t v\|_3,\|\p_t^2 v\|_2, \|\p h\|_{L^{\infty}},\|\p_t h\|_2,\|\p_t^2 h\|_1)\|\p_t\dd^3 h\|_0.
\end{equation}

\item When $\dd^3=\TP^3$, we consider the time integral of \eqref{h4e4}. We first integrate $\p_\nu$ by parts, then integrate $\p_t$ by parts to get the following equality
\begin{align*}
&~~~~\int_0^T\io \p_\nu ([\TP^3, E^{\nu\mu}]\p_\mu h)\p_t\TP^3 h\dy \dt\\
&=-\int_0^T\io ([\TP^3, E^{\nu\mu}]\p_\mu h)\p_\nu\p_t\TP^3 h\dy \dt+\int_0^T\ig([\TP^3, E^{\nu\mu}]\p_\mu h)N_\nu\underbrace{\p_t\TP^3 h}_{=0}~dS\dt \\
&=\int_0^T\io \p_t([\TP^3, E^{\nu\mu}]\p_\mu h)\p_\nu\TP^3 h\dy \dt-\io([\TP^3, E^{\nu\mu}]\p_\mu h)\p_\nu\TP^3 h\dy\bigg|^{t=T}_{t=0}.
\end{align*}
\end{itemize}
The leading order term in the first integral is $\TP^3\p_t E$ which has $L^2$- control, so one can bound this directly by using H\"older's inequality
\begin{equation}\label{h4e42}
\int_0^T\io \p_t([\TP^3, E^{\nu\mu}]\p_\mu h)\p_\nu\TP^3 h\dy \dt\lesssim \int_0^T P(\|\eta\|_{\hc},\|\p v\|_3,\|\p h\|_{L^{\infty}},\|\p_t h\|_3)\|\TP^3h\|_1\dt.
\end{equation}
As for the second integral, we can use H\"older's inequality first, then use $\epsilon$-Young's inequality and Jensen's inequality
\begin{equation}
\begin{aligned}
&~~~~-\io([\TP^3, E^{\nu\mu}]\p_\mu h)\p_\nu\TP^3 h\dy\bigg|_{t=T} \\
&\lesssim\|\TP^3 E(T)\|_0\|\p h(T)\|_{L^{\infty}}\|\TP^3 h(T)\|_1+ \|\TP^2 E(T)\|_1\|\p\TP h(T)\|_{1}\|\TP^3 h(T)\|_1+\|\TP E(T)\|_{L^{\infty}}\|\p\TP^2 h(T)\|_{0}\|\TP^3 h(T)\|_1  \\
&\lesssim\epsilon\|\TP^3 h(T)\|_1^2+\frac{1}{8\epsilon}(\|\TP E(T)\|_{2}^4+\|\TP E(T)\|_{L^{\infty}}^4+\|\TP^2 h(T)\|_1^4) \\
&\lesssim\epsilon\|\TP^3 h(T)\|_1^2+\frac{1}{8\epsilon}\left(\|\TP E(0)\|_2^4+\|\TP E(0)\|_{L^{\infty}}^4+\|h(0)\|_3^4+\int_0^T\|\p_t\TP E(t)\|_{2}^4+\|\p_t\TP^2h(t)\|_1^4\dt\right)\\
&\lesssim\epsilon\|\TP^3 h(T)\|_1^2+\PP_0+\int_0^T P(\|\eta\|_{\hc},\|v\|_4,\|\p_t h\|_3)\dt.
\end{aligned}
\end{equation}
The above estimates along with $$\io([\TP^3, E^{\nu\mu}]\p_\mu h)\p_\nu\TP^3 h\dy\bigg|_{t=0}\lesssim\PP_0$$ give the bound for the time integral of \eqref{h4e4}:
\begin{equation}\label{h4e43}
\int_0^T\io \p_\nu ([\TP^3, E^{\nu\mu}]\p_\mu h)\p_t\TP^3 h\dy \dt\lesssim\epsilon\|\TP^3 h(T)\|_1^2+\PP_0+\int_0^T P(\|\eta\|_{\hc},\|v\|_4,\|\p_t h\|_3)\dt.
\end{equation}

Now, summing up \eqref{h4e11}-\eqref{h4e42}, \eqref{h4e43} and then plugging it into \eqref{h4energy0}, we get the tangential derivative estimates of $h$:
\begin{equation}\label{h4energy00}
\sum_{\dd^3}\io\tilde{J}e'(h)|\dd^3\p_t h|^2+ |\p\dd^3 h|^2\dy\bigg|^{t=T}_{t=0}\lesssim \epsilon\|\TP^3h(T)\|_1^2+\PP_0+\int_0^T P(\EE_{\kk}(t))\dt.
\end{equation}Note that we have used $E$ is symmetric and positive-definite. Choosing $\epsilon>0$ sufficiently small, the term $\epsilon\|\TP^3h(T)\|_1^2$ can absorbed by the LHS of \eqref{h4energy00}.

\subsubsection{Estimates for the full Sobolev norm}\label{lessnormal}

Up to now, we have controlled all the tangential space-time derivative of $\p h$. Therefore it suffices to control $\geq 2$ normal derivatives of $h$. Actually this follows directly from the wave equation \eqref{waveh}
\[
e'(h)\p_t^2 h-\p_{\nu}(E^{\nu\mu}\p_{\mu}h)=\underbrace{-\tilde{J}\p_t\ak^{\nu\alpha}\p_{\nu}v_{\alpha}}_{:=F_0}-e''(h)(\p_t h)^2
\] that 
\[
\p_{33}h=-\frac{1}{E_{33}}\left(F_0-e''(h)(\p_t h)^2-\sum_{\nu+\mu\leq 5}\p_{\nu}(E^{\nu\mu}\p_{\mu}h)-e'(h)\p_t^2 h\right),
\] because the above identity shows that the second order normal derivative $\p_{33}h$ can be bounded by the terms containing $h$ with the same or lower order derivates and less normal derivatives. Hence, one can apply the same method to inductively control terms containing $h$ with more normal derivatives. For example, $\p_{3333}h$ can be controlled in the same way by taking $\p^2$ in \eqref{waveh} and then express $\p_{3333}h$ in terms of the terms with same or lower order and $\leq 3$ normal derivatives.

Therefore, combining with \eqref{Jkk1}, \eqref{h infty}, \eqref{ht L2} and \eqref{h4energy00}, one has the control for the Sobolev norm of enthalpy $h$ and its time derivatives after taking $\epsilon>0$ in \eqref{h4energy00} sufficiently small to be absorbed by $\|h\|_{\hc}^2$:
\begin{equation}\label{h4energy}
\left(\|h(t)\|_{\hc}^2+\sum_{k=1}^3\|\p_t^k h(t)\|_{4-k}^2+\|\sqrt{e'(h)}\p_t^4 h(t)\|_0^2\right)\bigg|_{t=0}^{t=T}\lesssim \PP_0+\int_0^T P(\EE_{\kk}(t))\dt.
\end{equation}

\subsection{The div-curl estimates for $v$}\label{divcurl}

In this section we are going to do the div-curl estimates for $v$ and its time derivatives in order to reduce the estimates of $\EE_{\kk}$ to the tangential estimates. Recall the Hodge-type decomposition in \eqref{hodge}:
\[
\forall s\geq 1:~~\|X\|_s\lesssim\|X\|_0+\|\curl X\|_{s-1}+\|\dive X\|_{s-1}+|X\cdot N|_{s-0.5}.
\]

Let $X=v,\p_t v, \p_t^2 v, \p_t^3 v$ and $s=4,3,2,1$, respectively. We get
\begin{equation}\label{hodgev}
\begin{aligned}
\|v\|_4&\lesssim\|v\|_0+\|\dive v\|_3+\|\curl v\|_3+|\TP^3 (v\cdot N)|_{0.5} \\
\|\p_t v\|_3&\lesssim\|\p_t v\|_0+\|\dive \p_t v\|_2+\|\curl \p_t v\|_2+|\TP^2 (\p_tv\cdot N)|_{0.5} \\
\|\p_t^2 v\|_2&\lesssim\|\p_t^2 v\|_0+\|\dive \p_t^2 v\|_1+\|\curl\p_t^2 v\|_1+|\TP (\p_t^2v\cdot N)|_{0.5} \\
\|\p_t^3 v\|_1&\lesssim\|\p_t^3 v\|_0+\|\dive \p_t^2 v\|_0+\|\curl\p_t^2 v\|_0+|\p_t^3 v\cdot N|_{0.5}.
\end{aligned}
\end{equation}
First, the $L^2$-norm of $v$ is controlled by:
\begin{equation}\label{v0l2}
\|v(T)\|_0 \leq \|v_0\|_0+\int_0^T \|\p_t v(t)\|_0\,dt,
\end{equation} while for $\|v_t\|_0,\|v_{tt}\|_0$ and $\|v_{ttt}\|_0$, we commute $\p_t$ through $\p_t v=-\pak h+ge_3$ and obtain
\begin{equation}\label{v0norm}
\begin{aligned}
\|\p_t v(T)\|_0&\lesssim\|\p_t v(0)\|_0+\int_0^T \|\p_t^2 v(t)\|_0\dt\lesssim\|\p_t v(0)\|_0+\int_0^T P(\|\eta\|_{\hc}, \|v\|_3,\|\p h\|_{L^{\infty}},\|\p_t h\|_1)\dt\\
\|\p_t^2 v(T)\|_0&\lesssim\|\p_t^2 v(0)\|_0 +\int_0^T P(\|\eta\|_{\hc},\|\p_t v\|_1,\|\p h\|_{L^{\infty}},\|\p_t h\|_1, \|\p_t^2 h\|_1)\dt \\
\|\p_t^3 v(T)\|_0&\lesssim\|\p_t^3 v(0)\|_0 +\int_0^T P(\|\eta\|_{\hc},\|v\|_3,\|\p_t v\|_2,\|\p_t^2 v\|_1, \|h\|_{\hc},\|\p_t h\|_2, \|\p_t^2 h\|_1, \|\p_t^3 h\|_0)\dt.
\end{aligned}
\end{equation}

Now we are going to control the curl term. Recall that $-\pak h$ is the Eulerian gradient of $h$ whose Eulerian curl is 0. This motivates us to take Eulerian curl in the equation $\p_tv=-\pak h+ge_3$ to get
\begin{align}
\p_t(\curla v)_{\lambda}=\epsilon_{\lambda\mu\alpha}\p_t\ak^{\nu\mu}\p_\nu v^{\alpha},
\label{curl}
\end{align} 
where $(\curla X)_{\lambda}:=\epsilon_{\lambda\mu\alpha}\ak^{\nu\mu}\p_\nu X^{\alpha}$ is the Eulerian curl of $X$ and $\epsilon_{\lambda\mu\alpha}$ is the sign of the 3-permutation $(\lambda\mu\alpha)\in S_3.$
Taking $\p^3$ in the last equation and then taking inner product with $\p^3\curla v$, we get
\begin{equation}
\frac{1}{2}\frac{d}{dt}\io|\p^3\curla v|^2=\io(\p^3\curla v^{\lambda})\p^3(\epsilon_{\lambda\mu\alpha}\p_t\ak^{\nu\mu}\p_\nu v^{\alpha})\dy\lesssim P(\|v\|_4,\|\p^2\eta\|_2),
\end{equation} and thus
\begin{equation}\label{curlv31}
\|\curla v(T)\|_3\lesssim P(\|v_0\|_4)+\int_0^T P(\|v\|_4,\|\p^2\eta\|_2)\dt.
\end{equation}
The Lagrangian curl only differs from the Eulerian curl by a sufficiently small term which shall be absorbed in the LHS
\begin{equation}\label{curlv3}
\begin{aligned}
\|\curl v(T)\|_3&=\|\text{curl}_{I-\ak} v(T)\|_3+\|\curla v\|_3 \\
&\lesssim\|\text{Id}-\ak\|_3\|v\|_4+P(\|v_0\|_4)+\int_0^T P(\|v\|_4,\|\eta\|_{\hc})\dt\\
&\lesssim\epsilon\|v\|_4+P(\|v_0\|_4)+\int_0^T P(\|v\|_4,\|\eta\|_{\hc})\dt.
\end{aligned}
\end{equation}
Commuting $\p_t^{k}$ ($k=1,2,3$) though \eqref{curl}, we get the evolution equation for $\curla \p_t^k v$
\[
\p_t(\curla \p_t^{k}v)_{\lambda}=\epsilon_{\lambda\mu\alpha}\p_t^k(\p_t\ak^{\nu\mu}\p_\nu v^{\alpha})-\p_t([\p_t^k,\curla ]v)_{\lambda}.
\]
Commuting $\p^{3-k}$ through the above equation, and then taking $L^2$ inner product with $\p^{3-k}(\curla \p_t^{k}v)$, we get
\begin{equation}\label{curlavtk}
\begin{aligned}
\frac{1}{2}\frac{d}{dt}\io|\p^{3-k}\curla \p_t^k v|^2\dy&=\io \p^{3-k}\left(\epsilon_{\lambda\mu\alpha}\p_t^k(\p_t\ak^{\nu\mu}\p_\nu v^{\alpha})-\p_t([\p_t^k,\curla ]v)_{\lambda}\right)\p^{3-k}(\curla \p_t^{k}v)^{\lambda}\dy \\
&\lesssim \|\curla \p_t^kv\|_0\cdot\underbrace{\left\|\p^{3-k}\left(\epsilon_{\lambda\mu\alpha}\p_t^k(\p_t\ak^{\nu\mu}\p_\nu v^{\alpha})-\p_t([\p_t^k,\curla ]v)_{\lambda}\right)\right\|_0}_{=:D_k}.
\end{aligned}
\end{equation}
One can use 
\begin{equation*}
 \ak = [\p \ek]^{-1} = \tilde{J}^{-1}
 \begin{pmatrix}
 \p_2 \ek\times \p_3 \ek\\
 \p_3\ek\times \p_1\ek\\
 \p_1\ek\times\p_2 \ek
 \end{pmatrix},
\end{equation*} 
$\p_t\eta=v+\psi$, and the estimates for $\psi$ in Lemma \ref{psi} to control $D_k$ directly. Note that the leading order terms in $D_k$ are $\p^{3-k}\p_t^{k+1}\ak$ and $\p^{4-k}\p_t^{k}v$. Therefore, 
\[
\sum_{k=1}^3D_k\lesssim \sum_{k=1}^3 P(\|\p_t^k v\|_{4-k},\|\p_t^k\psi\|_{4-k},\|\eta\|_{\hc})\lesssim P(\EE_{\kk}),
\]
which implies
\begin{equation}\label{curlvtk}
\|\curl\p_t^{k} v\|_{3-k}\lesssim\epsilon\|\p_t^k v\|_{4-k}+\PP_0+\int_0^TP(\EE_\kk(t))\dt.
\end{equation}

For boundary terms in \eqref{hodgev}, we invoke the normal trace lemma (cf. Lemma \ref{normaltrace}) to get
\begin{equation}
\label{vbdry}|\TP^3(v\cdot N)|_{0.5}\lesssim \|\TP^4 v\|_0+\|\TP^3 \dive v\|_0.
\end{equation}
Similarly we have
\begin{align}
\label{vtbdry} |\TP^2 (\p_tv\cdot N)|_{0.5}&\lesssim  \|\TP^3\p_t v\|_0+\|\TP^2 \dive \p_t v\|_0 \\
\label{vttbdry} |\TP (\p_t^2v\cdot N)|_{0.5}&\lesssim  \|\TP^2\p_t^2 v\|_0+\|\TP \dive \p_t^2 v\|_0 \\
\label{vtttbdry} |\p_t^3 v\cdot N|_{0.5}&\lesssim  \|\TP\p_t^3 v\|_0+\|\dive \p_t^3 v\|_0.
\end{align} Therefore the boundary estimates are all reduced to divergence and tangential estimates.

Now we come to estimate the divergence. Recall that the Eulerian divergence $\diva X=\dive X+(\ak^{\mu\alpha}-\delta^{\mu\alpha})\p_\mu X_{\alpha}$, which together with \eqref{Jkk1} implies 
\begin{equation}\label{iasmall}
\begin{aligned}
\forall s>2.5&:~\|\dive X\|_{s-1}\lesssim\|\diva X\|_{s-1}+\|I-\ak\|_{s-1}\|X\|_s\lesssim\|\diva X\|_{s-1}+\epsilon\|X\|_s\\
\forall 1\leq s\leq 2.5&:~\|\dive X\|_{s-1}\lesssim\|\diva X\|_{s-1}+\|I-\ak\|_{L^{\infty}}\|X\|_s\lesssim\|\diva X\|_{s-1}+\epsilon\|X\|_s.
\end{aligned}
\end{equation} The $\epsilon$-terms can be absorbed by $\|X\|_s$ on LHS by choosing $\epsilon>0$ sufficiently small. So it suffices to estimate the Eulerian divergence which satisfies $\diva v=-\p_t e(h)$. Taking time derivatives in this equation, we get
\[
\diva \p_t^k v=-\p_t^{k+1}e(h)-[\p_t^k,\ak^{\mu\alpha}]\p_\mu v_{\alpha},~~k=0,1,2,3.
\]

The leading order terms in $\diva \p_t^k v$ are $e'(h)\p_t^k h\p_t h$, $\p_t^k\ak^{\mu\alpha}\p_\mu v_{\alpha}$ and $\p_t\ak^{\mu\alpha}\p_{\mu}\p_t^{k-1}v_{\alpha}$ when $k\geq 1$. Therefore, we have 
\begin{equation}\label{divav1}
\begin{aligned}
\|\diva v\|_3&\lesssim\|e'(h)\p_t h\|_3\\
\|\diva \p_t v\|_2&\lesssim \|e'(h)\p_t^2 h\|_2\|\p_t h\|_2+\|\p v\|_2^2\lesssim P(\|e'(h)\p_t^2 h\|_2,\|\p_t h\|_2,\|v\|_3) \\
&\lesssim P(\|e'(h)\p_t^2 h\|_2,\|\p_t h\|_2)+P(\|v_0\|_3)+\int_0^TP(\|\p_t v(t)\|_3)\dt
\end{aligned}
\end{equation}
and similarly,
\begin{equation}\label{divav2}
\begin{aligned}
 \|\diva \p_t^2 v\|_1&\lesssim P(\|e'(h)\p_t^3 h\|_1,\|e'(h)\p_t^2 h\|_2,\|\p_th\|_2,\|v\|_3,\|\p_t v\|_2)\\
&\lesssim P(\|e'(h)\p_t^3 h\|_1,\|e'(h)\p_t^2 h\|_2,\|\p_th\|_2)+\PP_0+\int_0^T P(\|\p_t v\|_3,\|\p_t^2 v\|_2)\dt\\
\|\diva \p_t^3 v\|_0&\lesssim P(\|e'(h)\p_t^4 h\|_0,\|e'(h)\p_t^3 h\|_1,\|e'(h)\p_t^2 h\|_2,\|\p_th\|_2,\|v\|_3,\|\p_t v\|_2,\|\p_t^2 v\|_1) \\
&\lesssim P(\|e'(h)\p_t^4 h\|_0,\|e'(h)\p_t^3 h\|_1,\|e'(h)\p_t^2 h\|_2,\|\p_th\|_2)+\PP_0+\int_0^T P(\|\p_t v\|_3,\|\p_t^2 v\|_2, \|\p_t^3 v\|_1)\dt.
\end{aligned}
\end{equation}

Combining \eqref{iasmall} and \eqref{divav1}-\eqref{divav2}, we know the divergence estimates are all be reduced to the estimates of $h$ which has been done in Section \ref{hapriori}. By choosing $\epsilon>0$ in \eqref{iasmall} to be sufficiently small, and using the estimates of $h$ in \eqref{h4energy}, we finally finish the divergence estimates

\begin{equation}\label{divvtk}
\sum_{k=0}^3\|\dive \p_t^k v\|_{3-k}\lesssim \PP_0+\int_0^T P(\EE_{\kk}(t))\dt.
\end{equation}

\subsection{Estimates for time derivatives of $v$}\label{tgtime}

As a result of div-curl estimates, it suffices to estimate the $L^2$-norms of $\TP^4v,\TP^3\p_t v,\cdots, \p_t^4 v$. In this part we are going to do the tangential estimates for the time derivatives of $v$, in order to finish the control $\|\p_t^{k}v\|_{4-k}$ with $k\geq 1$. The fact that $\p^2\eta$ and $\p_t\p^2\eta$ are of the same spatial regularity in Sobolev norms is essential for us to close the estimates. 

Let $\dd^4=\p_t^4,\p_t^3\TP,\p_t^2\TP^2,\p_t\TP^3$. First we compute
\begin{equation}\label{tgt0}
\begin{aligned}
\frac{d}{dt}\frac{1}{2}\io |\dd^4 v|^2\dy&=\io\dd^4 v_{\alpha} \dd^4\p_tv^{\alpha} \dy=-\io\dd^4 v_{\alpha}\dd^4(\ak^{\mu\alpha}\p_{\mu} h)\dy\\
&=-\io(\dd^4 v_{\alpha})\ak^{\mu\alpha}(\p_{\mu}\dd^4 h)\dy -\underbrace{\io\dd^4 v_{\alpha} ([\dd^4,\ak^{\mu\alpha}]\p_\mu h)\dy}_{L_1}.
\end{aligned}
\end{equation}
In the first integral above, we integrate $\p_{\mu}$ by parts and invoking the equation $\diva v=-e'(h)\p_t h$ to obtain:
\begin{equation}\label{tgt1}
\begin{aligned}
&~~~~-\io(\dd^4 v_{\alpha})\ak^{\mu\alpha}(\p_{\mu}\dd^4 h)\dy \\
&=-\ig \dd^4 v_{\alpha} \ak^{\mu\alpha}N_{\mu}\underbrace{\dd^4 h}_{=0}dS\underbrace{-\io ([\dd^4,\ak^{\mu\alpha}]\p_\mu v_{\alpha})\dd^4 h\dy}_{L_2}+\underbrace{\io \dd^4 v_{\alpha}\p_\mu \ak^{\mu\alpha}\dd^4 h\dy}_{L_3}+\io \dd^4 \diva v \dd^4 h\dy \\
&=-\io \dd^4(e'(h)\p_t h) \dd^4 h \dy+L_2+L_3\\
&=-\frac{d}{dt}\frac{1}{2}\io e'(h)|\dd^4 h|^2\dy+\underbrace{\io e''(h)\p_t h|\dd^4 h|^2-[\dd^4,e'(h)]\p_t h\dd^4 h\dy}_{L_4}+L_2+L_3.
\end{aligned}
\end{equation}
It is not difficult to see $L_3$ and $L_4$ can be controlled directly:
\begin{align}
\label{tgl3} L_3&\lesssim\|\dd^4 v\|_0\|\p a\|_2\|\dd^4 h\|_{0}\lesssim P(\EE_{\kk}(t)), \\
\label{tgl4} \sum_{\dd^4}L_4&\lesssim\|\sqrt{e'(h)}\dd^4 h\|_0 P\left(\sum_{k\geq 1}\|\sqrt{e'(h)}
\p_t^{k}\p^{4-k} h\|_0\right)\lesssim P(\EE_{\kk}(t)).
\end{align}
To estimate $L_1$ and $L_2$, it suffices to control the commutator $[\dd^4,\ak]f$ in $L^2$-norm.
\begin{align*}
\|[\dd^4,\ak]f\|_0&=\|(\dd^4 a)f+4(\dd^3 a)(\dd f)+6(\dd^2 a)(\dd^2 f)+4(\dd a)(\dd^3 f)\|_{0} \\
&\lesssim \|\dd^4 a\|_0\|f\|_{L^{\infty}}+\|\dd^3 a\|_1\|\dd f\|_1+\|\dd^2 a\|_1\|\dd^2 f\|_1+\|\dd a\|_2\|\dd^3 f\|_0.
\end{align*}
Let $f=\p v$ and $\p h$ respectively (corresponding to $L_1$ and $L_2$), and recall $a=[\p\eta]^{-1}$. By Lemma \ref{etapsi}, we have
\begin{equation}\label{tgl1l2}
\begin{aligned}
L_1&\lesssim \|\dd^4 v\|_0(\|\dd^4 a\|_0\|\p v\|_2+\|\dd^3 a\|_1\|\dd \p v\|_1+\|\dd^2 a\|_1\|\dd \p v\|_1+\|\dd a\|_2\|\dd^3 \p v\|_0) \\
&\lesssim P(\sum_{k=1}^4 \|\p_t^k v\|_{4-k},\|\p^2\eta\|_2, \|v\|_4)\lesssim P(\EE_{\kk}(t));\\
L_2&\lesssim\|\dd^4 h\|_0(\|\dd^4 a\|_0\|\p h\|_{L^{\infty}}+\|\dd^3 a\|_1\|\dd \p h\|_1+\|\dd^2 a\|_1\|\dd \p h\|_1+\|\dd a\|_2\|\dd^3 \p h\|_0) \\
&\lesssim P(\sum_{k=0}^3 \|\p_t^k v\|_{4-k},\|\p h\|_{L^\infty},\|\p_t h\|_1,\|\p_t^2 h\|_1, \|\p_t^3 h\|_0, \|\p^2\eta\|_2)\lesssim P(\EE_{\kk}(t)).
\end{aligned}
\end{equation}
Summing up \eqref{tgt0}-\eqref{tgl1l2}, we are able to get the energy bound
\begin{equation}\label{tgt}
\frac{d}{dt}\frac{1}{2}\left(\sum_{k=1}^4\|\p_t^k\TP^{4-k} v\|_0^2+\|\sqrt{e'(h)}\p_t^k \TP^{4-k} h\|_0^2\right)\lesssim P(\EE_{\kk}(t)).
\end{equation}

\subsection{Estimates for spatial derivatives of $v$: Alinhac's good unknown method}\label{tgspace}

Now it remains to control $\|\TP^4 v\|_0^2$ to close the a priori estimates of the approximation system \eqref{app1}. It should be emphasized here that our method in Section \ref{tgtime} cannot be used in the full spatial derivatives, because the $L^2$-norm of the commutator $[\TP^4,\ak](\p v)$ and  $[\TP^4,\ak](\p h)$ cannot be controlled due to the lack of time derivatives. To overcome such difficulty, we introduce Alinhac's good unknowns for both $v$ and $h$, which actually uncover that the essential leading order terms in $\TP^4\pak v$ and $\TP^4\pak h$ is exactly the covariant derivative $\pak$ of their Alinhac's good unknowns. As a result, one can commute $\TP^4$ and $\pak$ in the energy estimate without producing any higher order commutator apart from $\TP^4(\diva v)\TP^4 h$. However, the third equation of \eqref{app1} yields $\TP^4(\diva v)\TP^4 h=-\TP^4(e'(h)\p_t h)\TP^4 h$, which gives the energy term $-\frac{1}{2}\frac{d}{dt}\io\|\TP^4 h\|_0^2$ and thus no extra higher order term appears. This being said, the use of Alinhac's good unknowns avoids the control of the 5-th order wave equation of $h$ together with delicate elliptic estimates, e.g., Lindblad-Luo \cite{lindblad2018priori}, Luo \cite{luo2018ww}, Ginsberg-Lindblad-Luo \cite{GLL2019LWP}.

 The Alinhac's good unknown was first introduced by Alinhac \cite{alinhacgood89}, and has been frequently used in the study of free-boundary problems of incompressible fluids because the incompressibility condition (Eulerian divergence-free) eliminates the only extra term $\TP^r(\diva v)=0$, e.g., Masmoudi-Rousset \cite{MRgood2017}, Gu-Wang \cite{gu2016construction}, Wang-Xin \cite{wang2015good}, etc. 
On the other hand, in free-boundary problems of compressible fluids, the Alinhac's good unknowns were crucial in \cite{trakhiningas2009} together with the Nash-Moser iteration.  Moreover, there are several studies for free-boundary problems in ideal compressible MHD equations in which the passage to the Alinhac's good unknowns is used to study the linearized problem in the framework of the Eulerian approach. For example, in this connection, we refer to \cite{Chen-Wang08, trakhinin2005, trakhinin2009} for compressible current-vortex sheets, and \cite{ST2014, TW2021, TW2021A} for the plasma-vacuum interface problem in compressible MHD.

\subsubsection{Introducing Alinhac's good unknowns}

For simplicity we replace $\TP^4$ by $\tpl$ which is more convenient for us to deal with the correction term $\psi$ on the boundary. For a smooth function $g=g(t,x)$, we define its ``Alinhac's good unknown" (for the 4-th order derivative) to be 
\begin{align}\label{alinhac}
\GG:=\tpl g-\tpl\ek\cdot\pak g=\tpl g-\tpl \ek_{\beta}\ak^{\mu\beta}\p_\mu g,
\end{align}
which enjoys the following important properties.
\begin{lem} \label{lem AGU property}
We have
\begin{equation}\label{alinhaca}
\tpl(\pak^{\alpha}g)=\pak^{\alpha}\GG+C^{\alpha}(g)
\end{equation}
with 
\begin{equation}\label{alinhacc}
\|C^{\alpha}(g)\|_0\lesssim P(\|\eta\|_{\hc})\|g\|_{\hc}.
\end{equation}
\end{lem}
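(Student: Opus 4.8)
The plan is to expand $\tpl(\pak^{\alpha}g)$ by the Leibniz rule, recognize that the only two genuinely top-order pieces recombine precisely into $\pak^{\alpha}\GG$, and estimate everything left over in $L^{2}(\Omega)$ by $P(\|\eta\|_{\hc})\|g\|_{\hc}$. Since $\tpl=\TP^{2}\TL$ is a constant-coefficient tangential operator it commutes with each $\p_{\mu}$, so applying it to $\pak^{\alpha}g=\ak^{\mu\alpha}\p_{\mu}g$ gives
\[
\tpl(\pak^{\alpha}g)=\ak^{\mu\alpha}\,\tpl\p_{\mu}g+(\tpl\ak^{\mu\alpha})\,\p_{\mu}g+\sum_{k=1}^{3}c_{k}\,(\tpl^{(k)}\ak^{\mu\alpha})(\tpl^{(4-k)}\p_{\mu}g),
\]
where $\tpl^{(k)}$ denotes schematically a product of $k$ of the four tangential derivatives comprising $\tpl$ and the $c_{k}$ are fixed combinatorial constants. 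The extreme terms $k=0$ and $k=4$ are the dangerous ones: $\ak^{\mu\alpha}\tpl\p_{\mu}g$ carries five derivatives of $g$, and $(\tpl\ak^{\mu\alpha})\p_{\mu}g$ carries, through $\ak=(\p\ek)^{-1}$, five derivatives of $\ek$, so neither is $L^{2}$-bounded by $\|g\|_{\hc}$ resp.\ $\|\eta\|_{\hc}$ alone.

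Next I would use the differentiation identity $\p_{\lambda}\ak^{\nu\alpha}=-\ak^{\nu\beta}\ak^{\mu\alpha}\p_{\lambda}\p_{\mu}\ek_{\beta}$, obtained from $\ak^{\mu\alpha}\p_{\mu}\ek^{\beta}=\delta^{\alpha\beta}$, in its higher-order form: applying $\tpl$ to the same identity and contracting with $\ak^{\nu\beta}$ produces
\[
\tpl\ak^{\mu\alpha}=-\ak^{\mu\beta}\ak^{\nu\alpha}\,\tpl\p_{\nu}\ek_{\beta}+R^{\mu\alpha},
\]
where $R^{\mu\alpha}$ collects all terms in which at most three of the four tangential derivatives land on a single $\ak$-factor; hence $R^{\mu\alpha}$ is a polynomial in $\ak$ times products of derivatives of $\ek$ of order $\le 4$, and by Lemma \ref{etapsi}, the embedding $H^{2}(\Omega)\hookrightarrow L^{\infty}(\Omega)$, and H\"older's inequality one has $\|R^{\mu\alpha}\|_{0}\lesssim P(\|\eta\|_{\hc})$. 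On the other hand, writing out $\pak^{\alpha}\GG=\ak^{\nu\alpha}\p_{\nu}\bigl(\tpl g-\tpl\ek_{\beta}\,\ak^{\mu\beta}\p_{\mu}g\bigr)$ gives
\[
\pak^{\alpha}\GG=\ak^{\nu\alpha}\,\tpl\p_{\nu}g-\ak^{\nu\alpha}(\tpl\p_{\nu}\ek_{\beta})\,\ak^{\mu\beta}\p_{\mu}g-\ak^{\nu\alpha}\,\tpl\ek_{\beta}\,\p_{\nu}(\ak^{\mu\beta}\p_{\mu}g).
\]
The first term here equals $\ak^{\mu\alpha}\tpl\p_{\mu}g$, and by the displayed formula for $\tpl\ak^{\mu\alpha}$ the second term equals $(\tpl\ak^{\mu\alpha})\p_{\mu}g$ up to $-R^{\mu\alpha}\p_{\mu}g$; thus the cancellation of the two top-order contributions is automatic and
\[
C^{\alpha}(g)=R^{\mu\alpha}\p_{\mu}g+\sum_{k=1}^{3}c_{k}\,(\tpl^{(k)}\ak^{\mu\alpha})(\tpl^{(4-k)}\p_{\mu}g)+\ak^{\nu\alpha}\,\tpl\ek_{\beta}\,\p_{\nu}(\ak^{\mu\beta}\p_{\mu}g).
\]

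It then remains to bound each of these three groups in $L^{2}(\Omega)$. The first is $\le\|R^{\mu\alpha}\|_{0}\|\p g\|_{L^{\infty}}\lesssim P(\|\eta\|_{\hc})\|g\|_{\hc}$. In the last group $\tpl\ek_{\beta}\sim\p^{4}\ek\in L^{2}$, while $\p_{\nu}(\ak^{\mu\beta}\p_{\mu}g)$ is a product of $\ak$, $\p^{2}\ek$, $\p g$, $\p^{2}g$ that lies in $L^{\infty}$ by $H^{2}\hookrightarrow L^{\infty}$, so this group is $\lesssim P(\|\eta\|_{\hc})\|g\|_{\hc}$. For the cross terms: when $k=1$, $\tpl^{(1)}\ak\sim\p^{2}\ek\in L^{\infty}$ and $\tpl^{(3)}\p g\sim\p^{4}g\in L^{2}$; when $k=3$, $\tpl^{(3)}\ak\sim\p^{4}\ek\in L^{2}$ and $\tpl^{(1)}\p g\sim\p^{2}g\in L^{\infty}$; for the balanced $k=2$ term $\tpl^{(2)}\ak\sim\p^{3}\ek$ and $\tpl^{(2)}\p g\sim\p^{3}g$, and since on the unbounded domain one cannot pair two $L^{2}$ factors I would use H\"older with $\p^{3}\ek\in L^{3}$ (via $H^{1}\hookrightarrow L^{3}$, controlled by $\|\p^{2}\ek\|_{2}$) and $\p^{3}g\in L^{6}$ (via $H^{1}\hookrightarrow L^{6}$, controlled by $\|\p^{2}g\|_{2}$); the lower-order pieces hidden in $\tpl^{(k)}\ak$ (products of $\p^{2}\ek$ and $\p^{3}\ek$) are handled the same way, and $\|\ek\|$-quantities reduce to $\|\eta\|_{\hc}$ via $\|\lkk^{2}f\|_{s}\lesssim\|f\|_{s}$ and $\|\lkk^{2}f\|_{L^{\infty}}\lesssim\|f\|_{L^{\infty}}$. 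Collecting all of this yields \eqref{alinhacc}. The one thing that actually requires care is this last bookkeeping — spending the available $H^{2}$-regularity so that in each product on the unbounded $\Omega$ one factor lands in $L^{3}$ or $L^{\infty}$ and the other in $L^{6}$ or $L^{2}$; by contrast the recombination of the two top-order terms into $\pak^{\alpha}\GG$ is purely algebraic and poses no obstacle.
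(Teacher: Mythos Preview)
Your proof is correct and follows essentially the same route as the paper: Leibniz-expand $\tpl(\ak^{\mu\alpha}\p_{\mu}g)$, use the differentiation identity for $\ak$ to split off the top-order piece of $\tpl\ak^{\mu\alpha}$, and observe that the two five-derivative terms recombine exactly into $\pak^{\alpha}\GG$. The paper packages your $R^{\mu\alpha}$ and cross terms as the commutators $[\TP\TL,\ak^{\mu\gamma}\ak^{\beta\alpha}]\TP\p_{\beta}\ek_{\gamma}$ and $[\tpl,\ak^{\mu\alpha},\p_{\mu}g]$ respectively, and states the $L^2$ bounds without spelling out the $L^{3}$--$L^{6}$ pairing for the balanced term, but the content and the three-piece error decomposition are the same.
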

\begin{proof}
Invoking the identity 
\begin{align}\label{derivative a}
\p \ak^{\mu\alpha} = -\ak^{\mu\gamma}\p\p_\beta \ek_\gamma \ak^{\beta\alpha}, 
\end{align}
which is obtained from differentiating $\ak^{\mu\alpha} \p_\mu \eta_\beta =\delta^{\alpha}_\beta$, then
\begin{align*}
\tpl(\pak^{\alpha}g)&=\pak^{\alpha}(\tpl g)+(\tpl\ak^{\mu\alpha})\p_\mu g+[\tpl,\ak^{\mu\alpha},\p_{\mu} g] \\
&=\pak^{\alpha}(\tpl g)-\TP\TL(\ak^{\mu\gamma}\TP\p_{\beta}\ek_{\gamma}\ak^{\beta\alpha})\p_\mu g+[\tpl,\ak^{\mu\alpha},\p_{\mu} g] \\
&=\pak^{\alpha}(\tpl g)-\ak^{\beta\alpha}\p_{\beta}\tpl\ek_{\gamma}\ak^{\mu\gamma}\p_\mu g-([\TP\TL,\ak^{\mu\gamma}\ak^{\beta\alpha}]\TP\p_{\beta}\ek_{\gamma})\p_\mu g+[\tpl,\ak^{\mu\alpha},\p_{\mu} g] \\
&=\underbrace{\pak^{\alpha}(\tpl g-\tpl \eta_{\gamma}\ak^{\mu\gamma}\p_\mu g)}_{=\pak^{\alpha}\GG}+\underbrace{\tpl \eta_{\gamma}\pak^{\alpha}(\pak^{\gamma} g)-([\TP\TL,\ak^{\mu\gamma}\ak^{\beta\alpha}]\TP\p_{\beta}\ek_{\gamma})\p_\mu g+[\tpl,\ak^{\mu\alpha},\p_{\mu} g] }_{=:C^{\alpha}(g)},
\end{align*} where $[\tpl,f,g]:=\tpl(fg)-\tpl(f)g-f\tpl(g)$.
A direct computation yields that
\begin{align*}
\|\tpl \eta_{\gamma}\pak^{\alpha}(\pak^{\gamma} g)\|_0&\lesssim P(\|\p^2\eta\|_2)(\|\p g\|_{L^{\infty}}+\|\p^2 g\|_2);\\
\|([\TP\TL,\ak^{\mu\gamma}\ak^{\beta\alpha}]\TP\p_{\beta}\ek_{\gamma})\p_\mu g\|_0&\lesssim\|[\TP\TL,\ak^{\mu\gamma}\ak^{\beta\alpha}]\TP\p_{\beta}\ek_{\gamma}\|_0\|g\|_{W^{1,\infty}}\lesssim P(\|\eta\|_{\hc})(\|\p g\|_{L^{\infty}}+\|\p^2 g\|_1) \\
\|[\tpl,\ak^{\mu\alpha},\p_{\mu} g]\|_0&\lesssim P(\|\eta\|_{\hc})(\|\p g\|_{L^{\infty}}+\|\p^2 g\|_2).
\end{align*}
\end{proof}
Moreover, 
$\|\GG\|_0^2$ controls $\|\TP^4 g\|_0^2$ modulo a controllable error term. Specifically, 
\begin{lem}\label{lem AGU controls 4 tangential}
We have
\begin{align}\label{alinhac4}
\|\TP^4 g(T)\|_0^2 \lesssim\|\GG(T)\|_0^2+\int_0^T P(\|\eta\|_{\hc}, \|v\|_4, \|\p g\|_{L^\infty}, \|\p_t g\|_{L^\infty}).
\end{align}
\end{lem}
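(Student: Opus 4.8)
The plan is to start from the definition $\GG=\tpl g-\tpl\ek_\beta\ak^{\mu\beta}\p_\mu g$ and rearrange it to read $\tpl g=\GG+\tpl\ek_\beta\ak^{\mu\beta}\p_\mu g$, so that
\[
\|\tpl g\|_0\leq\|\GG\|_0+\|\tpl\ek_\beta\ak^{\mu\beta}\p_\mu g\|_0.
\]
Since $\tpl=\TP^2\TL$ has order $4$ and agrees with $\TP^4$ up to permutations of the tangential derivatives, controlling $\|\tpl g\|_0$ is equivalent to controlling $\|\TP^4 g\|_0$ (the discrepancy being lower-order tangential derivatives which are bounded by interpolation, or one simply notes $\|\TP^4 g\|_0\lesssim\|\tpl g\|_0+\|g\|_{\hc}$ type inequalities on the half-space via Fourier multipliers). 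So the whole task reduces to estimating the error term $\|\tpl\ek_\beta\ak^{\mu\beta}\p_\mu g\|_0$.

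For that error term, the factor $\ak^{\mu\beta}\p_\mu g=\pak^\beta g$ has all four tangential derivatives stripped off, so it is at worst $\|\p g\|_{L^\infty}$ (times $\|\ak\|_{L^\infty}\lesssim P(\|\eta\|_{\hc})$ by \eqref{etaLinfty} and the a priori assumption \eqref{akk1}). The dangerous factor is $\tpl\ek=\TP^2\TL\ek$, which is $\TP^4$ of the smoothed flow map: by \eqref{eta4} this is $\lesssim\|\tpl\eta\|_0\lesssim\|\p^2\eta\|_2\lesssim\|\eta\|_{\hc}$ — but crucially $\p^2\eta$ (hence $\tpl\ek$) is NOT in $\EE_\kk$ except through $\|\eta\|_{\hc}$, and more to the point $\|\eta\|_{\hc}$ at time $T$ is itself part of what we are trying to bound. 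So the second key step is to write $\eta(T)=\mathrm{Id}+\int_0^T\p_t\eta\,dt=\mathrm{Id}+\int_0^T(v+\psi)\,dt$, take $\p^2$ and the $H^2(\Omega)$ norm, use Minkowski's integral inequality, and invoke \eqref{psi4} to get
\[
\|\p^2\eta(T)\|_2\lesssim\int_0^T\left(\|v\|_4+\|\psi\|_4\right)dt\lesssim\int_0^T P(\|\p\eta\|_{L^\infty},\|\p^2\eta\|_2,\|v\|_4)\,dt,
\]
and similarly handle $\|\p\eta\|_{L^\infty}$ with an $L^\infty$ transport estimate. Squaring and absorbing, this feeds $\|\eta(T)\|_{\hc}^2\lesssim\int_0^T P(\EE_\kk(t))\,dt$ plus the trivial initial contribution (which is $0$ since $\eta_0=\mathrm{Id}$). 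Combining: $\|\tpl\ek_\beta\ak^{\mu\beta}\p_\mu g\|_0^2\lesssim P(\|\eta\|_{\hc})\|\p g\|_{L^\infty}^2\cdot\|\p^2\eta\|_2^2$, and integrating in time after using the transport bound for $\|\p^2\eta\|_2$ yields a right-hand side of the stated form $\int_0^T P(\|\eta\|_{\hc},\|v\|_4,\|\p g\|_{L^\infty},\|\p_t g\|_{L^\infty})$.

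The role of $\|\p_t g\|_{L^\infty}$ on the right side of \eqref{alinhac4} is to allow trading $\|\p g\|_{L^\infty}$ at time $T$ for its initial value plus a time integral when $g=h$ (where $\p h$ is only controlled in $L^\infty$, not $L^2$), i.e. via $\|\p g(T)\|_{L^\infty}\leq\|\p g(0)\|_{L^\infty}+\int_0^T\|\p_t\p g\|_{L^\infty}\,dt$ combined with commuting $\p$ through the evolution — but since the statement already allows $\|\p g\|_{L^\infty}$ and $\|\p_t g\|_{L^\infty}$ inside the polynomial, one can simply keep $\|\p g\|_{L^\infty}$ as is if $g=v$, or absorb it into the polynomial directly. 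I expect the main obstacle to be purely bookkeeping: making sure that when $\GG$ is later differentiated in time (as needed in the energy estimate \eqref{AGU control}), the bound \eqref{alinhac4} is compatible — but for \emph{this} lemma the only genuine subtlety is that $\|\eta\|_{\hc}$ appears on both sides, which is resolved cleanly by the transport/Minkowski argument above since $\eta(0)=\mathrm{Id}$ contributes nothing and the growth is controlled by $\int_0^T P(\EE_\kk)$. No circularity arises because the bound \eqref{alinhac4} is an \emph{a priori} estimate, consistent with the bootstrap structure of Proposition 3.6.
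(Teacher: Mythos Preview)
Your overall strategy is on the right track, but it diverges from the paper's proof in a way that leaves a gap in reaching the \emph{stated} inequality. The paper applies the fundamental theorem of calculus directly to the entire error term:
\[
\left\|\tpl\ek_\beta\ak^{\mu\beta}\p_\mu g\big|_{t=T}\right\|_0\leq \left\|\tpl\ek_\beta\ak^{\mu\beta}\p_\mu g\big|_{t=0}\right\|_0+\int_0^T\|\p_t(\tpl\ek_\beta\ak^{\mu\beta}\p_\mu g)\|_0\,dt,
\]
notes that the initial term vanishes because $\tpl\eta_0=0$, and then expands $\p_t$ of the product by Leibniz. This immediately produces a single time integral on the right, matching the form of \eqref{alinhac4}. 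The appearance of $\|\p_t g\|_{L^\infty}$ in the polynomial comes from the term where $\p_t$ falls on $\p_\mu g$ in this Leibniz expansion --- not, as you suggest, from trading $\|\p g(T)\|_{L^\infty}$ for its time integral.

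By contrast, you bound the error pointwise at time $T$ as $\|\text{error}(T)\|_0^2\lesssim P(\|\eta(T)\|_{\hc})\|\p g(T)\|_{L^\infty}^2\|\p^2\eta(T)\|_2^2$ and then apply FTC only to the factor $\|\p^2\eta(T)\|_2$. This leaves $P(\|\eta(T)\|_{\hc})\|\p g(T)\|_{L^\infty}^2$ sitting \emph{outside} the time integral, so what you obtain is a product of time-$T$ quantities with an integral, not the pure $\int_0^T P(\cdots)\,dt$ form the lemma asserts. Your proposed fix --- converting $\|\p g(T)\|_{L^\infty}$ via $\|\p g(0)\|_{L^\infty}+\int_0^T\|\p_t\p g\|_{L^\infty}\,dt$ --- requires $\|\p\p_t g\|_{L^\infty}$, not $\|\p_t g\|_{L^\infty}$, and still leaves you with a product of integrals rather than a single one. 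The remedy is simple: apply FTC to the whole product $\tpl\ek\cdot\ak\cdot\p g$ from the start, exploiting that it vanishes at $t=0$; this is both cleaner and what the paper does.
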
 
\begin{proof}
The definition of $\GG$ implies
\begin{equation*}
\|\TP^4 g(T)\|_0\approx\|\tpl g(T)\|_0\lesssim\|\GG(T)\|_0+\left\|\Big(\tpl\ek_\beta\ak^{\mu\beta}\p_\mu g\Big)\big|_{t=T}\right\|_0, 
\end{equation*}
where
$$
\left\|\tpl\ek_\beta\ak^{\mu\beta}\p_\mu g\big|_{t=T}\right\|_0\leq \left\| \tpl\ek_\beta\ak^{\mu\beta}\p_\mu g\big|_{t=0}\right\|_0+\int_0^T \|\p_t (\tpl\ek_\beta\ak^{\mu\beta}\p_\mu g)\|_0.
$$
Here, $\tpl\ek_\beta\ak^{\mu\beta}\p_\mu g |_{t=0} =0$ because $\tpl \eta_0=0$. For the integrand of the second term, invoking \eqref{derivative a} with $\p=\p_t$, we have
\begin{align*}
\p_t  (\tpl\ek_\beta\ak^{\mu\beta}\p_\mu g) = \tpl\tilde{v}_\beta\ak^{\mu\beta}\p_\mu g+\tpl\ek_\beta\ak^{\mu\beta}\p_\mu\p_tg - \tpl\ek_\beta \ak^{\mu\gamma} \p_\tau \tilde{v}_\gamma \ak^{\tau\beta}\p_\mu g
\end{align*}
whose $L^2$-norm can be controlled by $P(\|\eta\|_{\hc}, \|v\|_4, \|\p g\|_{L^\infty}, \|\p_t g\|_{L^\infty}).$
\end{proof}
\begin{rmk}
For general initial data $\eta_0$, the term $P(\|\eta_0\|_{\hc})\|\p g(0)\|_{L^{\infty}}$ should also appear on the RHS of \eqref{alinhac4}. Specifically, 
\[
\left\|\tpl\ek_\beta\ak^{\mu\beta}\p_\mu g |_{t=0}\right\|_0\lesssim \|\tpl\eta_0\|_0\|\p\eta_0\|_{L^{\infty}}^2\|\p g(0)\|_{L^{\infty}} \leq P(\|\eta_0\|_{\hc})\|\p g(0)\|_{L^{\infty}}.
\]
\end{rmk}

\subsubsection{Tangential estimates of $v$: Interior part}

Now we introduce the Alinhac's good unknowns for $v$ and $h$ 
\begin{align}
\label{vgood} \VV&:=\tpl v-\tpl\ek\cdot\pak v\\
\label{hgood} \HH&:=\tpl h-\tpl\ek\cdot\pak h.
\end{align}
Applying $\tpl$ to the second equation in system \eqref{app1} and then using \eqref{vgood}, \eqref{hgood} to get
\begin{equation}\label{goodapp1}
\p_t\VV=-\pak \HH+\underbrace{\p_t(\tpl\ek\cdot\pak v)-C(h)}_{=:\FF},
\end{equation} subject to the boundary condition
\begin{equation}\label{bdrygood}
\HH=-\tpl\ek_{\beta}\ak^{3\beta}\p_3 h~~~\text{on }\Gamma,
\end{equation}
with the continuity equation
\begin{equation}\label{divgood}
\pak\cdot\VV=\tpl (\diva v)-C^{\alpha}(v_{\alpha})~~~\text{in }\Omega.
\end{equation}

Thanks to Lemma \ref{lem AGU controls 4 tangential}, it suffices to bound $\|\VV\|_0^2+\|\HH\|_0^2$ to close the estimates for $\|\TP^4 v\|_0^2+\|\TP^4 h\|_0^2$. Taking $L^2$ inner product between \eqref{goodapp1} and $\VV$, one gets

\begin{equation}\label{tgs1}
\frac{1}{2}\frac{d}{dt}\io|\VV|^2\dy=-\io\pak\HH\cdot\VV\dy+\io\FF\cdot\VV\dy.
\end{equation}
The second term on the RHS of \eqref{tgs1} can be directly controlled
\begin{equation}\label{tgs2}
\begin{aligned}
\io\FF\cdot\VV\dy&\leq (\|\p_t(\tpl\ek\cdot\pak v)\|_0+\|C(h)\|_0)\|\VV\|_0 \\
&\lesssim (P(\|\tpl\eta\|_0,\|\tpl v\|_0,\|\p\eta\|_{L^{\infty}},\|v\|_3,\|\p_t v\|_3)+P(\|\eta\|_{\hc})\| h\|_{\hc})\|\VV\|_0 \\
&\lesssim P(\|\eta\|_{\hc},\|v\|_4,\|\p_t v\|_3, \|h\|_{\hc}).
\end{aligned}
\end{equation}

For the first term in RHS of \eqref{tgs1}, we integrate by part and use \eqref{bdrygood}, \eqref{divgood} to get
\begin{equation}\label{IKL0}
\begin{aligned}
-\io \pak \HH\cdot \VV&=-\io \ak^{\mu\alpha}\p_\mu \HH\cdot \VV_{\alpha}\dy \\
&=-\ig \HH(\ak^{\mu\alpha}N_{\mu}\VV_{\alpha})dS+\io \HH(\pak\cdot\VV)\dy+\io(\p_\mu \ak^{\mu\alpha})\HH\VV_{\alpha}\dy\\
&=\ig\p_3h\tpl\ek_{\beta}\ak^{3\beta}\ak^{3\alpha}\VV_{\alpha} dS+\io \HH\tpl (\diva v)\dy \underbrace{-\io \HH C^{\alpha}(v_{\alpha})+\io(\p_\mu \ak^{\mu\alpha})\HH\VV_{\alpha}\dy}_{L_0}\\
&=:I+K+L_0.
\end{aligned}
\end{equation}
First, $L_0$ can be directly controlled by $P(\EE_{\kk})$ by using \eqref{alinhacc}
\begin{equation}\label{L0}
L_0\lesssim\|\HH\|_0P(\|\eta\|_{\hc})\|v\|_4+\|\p\ak\|_2\|\HH\|_0\|\VV\|_0\lesssim P(\|\eta\|_{\hc},\|v\|_4,\|h\|_{\hc}).
\end{equation}
Then we use $\diva v=-e'(h)\p_t h$ to bound $K$
\begin{equation}\label{K}
\begin{aligned}
K&:=\io \HH\tpl (\diva v)\dy=-\io (\tpl h-\tpl\ek\cdot\pak h)\tpl(e'(h)\p_t h)\dy\\
&=-\frac{d}{dt}\frac{1}{2}\io e'(h)|\tpl h|^2\dy+\frac{1}{2}\io e''(h)\p_t h|\tpl h|^2\dy+\io \HH ([\tpl,e'(h)]\p_t h)\dy\\
&~~~~\underbrace{+\io e'(h)(\tpl\p_t h)\tpl\ek\cdot\pak h\dy}_{:=K^*}\\
&\lesssim-\frac{d}{dt}\frac{1}{2}\io e'(h)|\tpl h|^2\dy+K^*+P(\|\p^2\eta\|_2,\|v\|_4,\|h\|_{\hc},\|\p_t h\|_3).
\end{aligned}
\end{equation}
The term $K^*$ cannot be bounded directly because it contains a higher order term $\tpl\p_t h$, but we can consider its time integral and integrate $\p_t$ by parts, then using $\epsilon$-Young inequality to absorb the $\epsilon$-term.

\begin{equation}\label{K*}
\int_0^T K^*(t)\dt=-\int_0^T\io e'(h)(\tpl\p_t h)(\tpl\ek\cdot\pak h)\dy\dt=-\int_0^T\io e'(h)(\p_t\tpl h)(\tpl\ek\cdot\pak h)\dy\dt
\end{equation}
Integrating $\p_t$ by parts, we get
\begin{align*}
-\int_0^T\io e'(h)(\p_t\tpl h)(\tpl\ek\cdot\pak h)\dy\dt= -\io e'(h)(\tpl h)(\tpl\ek\cdot\pak h)\dy\bigg|^{t=T}_{t=0}\\
+\int_0^T\io (e'(h)\tpl h)\p_t(\tpl\ek\cdot\pak h)\dy.
\end{align*}
The second term on the RHS is controlled directly by $\int_0^T P(\|h\|_{\hc},\|\p^2\eta\|_2,\|v\|_4)\dt$. For the first term on the RHS, we have
\begin{align*}
\left|-\io e'(h)(\tpl h)(\tpl\ek\cdot\pak h)\dy\bigg|^{t=T}_{t=0}\right|
\lesssim  P(\|h_0\|_{\hc},\|v_0\|_4)+ \|e'(h)\tpl h(T)\|_0\|\p^2\eta(T)\|_2\|\pak h(T)\|_{L^{\infty}}.
\end{align*}

Using $\epsilon$-Young's inequality, we have
\begin{align*}
&~~~~\|e'(h)\tpl h(T)\|_0\|\p^2\eta(T)\|_2\|\pak h(T)\|_{L^{\infty}}\\
&\leq \epsilon \|e'(h)\tpl h(T)\|_0^2+\frac{1}{8\epsilon}(\|\p^2\eta(T)\|_2^4+\|\pak h(T)\|_{L^{\infty}}^4) \\
&\lesssim\epsilon \|e'(h)\tpl h(T)\|_0^2+\left(\PP_0+\int_0^TP(\|v\|_4,\|\p\eta\|_{L^{\infty}},\|h\|_{\hc},\|\p_t h\|_3)\dt\right).
\end{align*}
Therefore, 
\begin{equation}\label{tgK}
\begin{aligned}
\int_0^T K^{*}(t)\dt\lesssim \epsilon \|e'(h)\tpl h(T)\|_0^2 +\PP_0+\int_0^TP(\|v\|_4,\|\eta\|_{\hc},\|h\|_{\hc},\|\p_t h\|_3)\dt.
\end{aligned}
\end{equation}
Here, $\epsilon \|e'(h)\tpl h(T)\|_0^2$ can be moved to the LHS when $\epsilon$ is sufficiently small. This concludes the control of $K$. 
\subsubsection{Tangential estimates of $v$: Boundary part}\label{bdrycancel}

Now it remains to control the boundary term $I$, where the Taylor sign boundary term in $\EE_{\kk}$ is produced and the correction term $\psi$ exactly eliminates the extra out-of-control terms produced by the tangential smoothing (these terms are 0 if $\kk=0$).

\begin{equation}\label{I0}
\begin{aligned}
I&=\ig \p_3 h\ak^{3\alpha}\ak^{3\beta}\tpl \ek_{\beta}\VV_{\alpha}~dS\\
&=\ig \p_3 h\ak^{3\alpha}\ak^{3\beta}\tpl \ek_{\beta}(\tpl v_{\alpha}-\tpl\ek\cdot\pak v_{\alpha})~dS\\
&=\ig \p_3 h\ak^{3\alpha}\ak^{3\beta}\tpl \ek_{\beta}(\tpl\p_t\eta_{\alpha}-\tpl\psi-\tpl\ek\cdot\pak v_{\alpha})~dS.
\end{aligned}
\end{equation}

We construct the Taylor-sign term in the energy functional $\EE_{\kk}$ from the first term.

\begin{equation}\label{I1}
\begin{aligned}
&~~~~\ig \p_3 h\ak^{3\alpha}\ak^{3\beta}\tpl \ek_{\beta}\tpl\p_t\eta_{\alpha}~dS \\
&=\ig\p_3 h\ak^{3\alpha}\ak^{3\beta}\tpl \lkk\eta_{\beta}\tpl\p_t\lkk\eta_{\alpha}\dS \\
&~~~~+\ig (\tpl\lkk\eta_{\beta})([\lkk,\p_3 h\ak^{3\alpha}\ak^{3\beta}]\tpl\p_t\eta_{\alpha})\dS \\
&=\frac{d}{dt}\frac{1}{2}\ig \p_3 h |\ak^{3\alpha}\tpl\lkk\eta_{\alpha}|^2\dS-\frac{1}{2}\ig\p_t\p_3 h |\ak^{3\alpha}\tpl\lkk\eta_{\alpha}|^2\dS \\
&~~~~\underbrace{-\ig \p_3 h \ak^{3\beta}\tpl\lkk\eta_{\beta}\p_t\ak^{3\alpha}\tpl\lkk\eta_{\alpha}\dS}_{B_1}+\underbrace{\ig (\tpl\lkk\eta_{\beta})([\lkk,\p_3 h\ak^{3\alpha}\ak^{3\beta}]\tpl\p_t\eta_{\alpha})\dS}_{LB_1}.
\end{aligned}
\end{equation}
In $LB_1$, we integrate $\TP^{0.5}$ by parts (by interpreting it in the Fourier sense) and then use Sobolev trace lemma, \eqref{lkk6} and Lemma \ref{etapsi} to get
\begin{equation}\label{LB1}
\begin{aligned}
LB_1&=\ig (\TP^{1.5}\TL\lkk\eta_{\beta})\TP^{0.5}([\lkk,\p_3 h\ak^{3\alpha}\ak^{3\beta}]\TP(\TP\TL\p_t\eta_{\alpha}))\dS \\
&\lesssim\|\p^2\eta\|_2|\p_3 h\ak^{3\alpha}\ak^{3\beta}|_{W^{1,\infty}}|\TP\TL\p_t\eta_{\alpha}|_{0.5} \\
&\lesssim\|\p^2\eta\|_2\|\p^2 h\|_2\|\p\ak\|_{L^{\infty}} \|v+\psi\|_4\lesssim P(\|\eta\|_{\hc},\|v\|_3,\|\p^2 h\|_2).
\end{aligned}
\end{equation}

Next, we plug $\p_t \ak^{3\alpha}=-\ak^{3\gamma}\p_\mu\p_t\ek_{\gamma}\ak^{\mu\alpha}$ into $B_1$ and then separate the normal derivative of $\ek_{\gamma}$ from tangential derivatives.
\begin{equation}\label{B1}
\begin{aligned}
B_1&=\underbrace{\ig \p_3 h \ak^{3\beta}\tpl\lkk\eta_{\beta}\ak^{3\gamma}\p_3\p_t\ek_{\gamma}\ak^{3\alpha}\tpl\lkk\eta_{\alpha}\dS}_{LB_2}+\ig \p_3 h \ak^{3\beta}\tpl\lkk\eta_{\beta}\ak^{3\gamma}\TP_i\p_t\ek_{\gamma}\ak^{i\alpha}\tpl\lkk\eta_{\alpha}\dS \\
&=LB_2+\underbrace{\ig \p_3 h \ak^{3\beta}\tpl\lkk\eta_{\beta}\ak^{3\gamma}\TP_i\lkk^2\psi_{\gamma}\ak^{i\alpha}\tpl\lkk\eta_{\alpha}\dS}_{LB_3} \\
&~~~~~~~~~~~~~+\underbrace{\ig \p_3 h \ak^{3\beta}\tpl\lkk\eta_{\beta}\ak^{3\gamma}\TP_i\lkk^2v_{\gamma}\ak^{i\alpha}\tpl\lkk\eta_{\alpha}\dS}_{B_1^*}.
\end{aligned}
\end{equation}
$LB_2$ can be directly bounded
\begin{equation}\label{LB2}
\begin{aligned}
LB_2&\lesssim |\ak^{3\beta}\tpl\lkk\eta_{\beta}|_0^2|\p_3h\ak^{3\gamma}\p_3\p_t\ek_{\gamma}|_{L^{\infty}}\\
&\lesssim |\ak^{3\beta}\tpl\lkk\eta_{\beta}|_0^2 P(\|h\|_{\hc},\|v\|_3,\|\p\eta\|_{L^{\infty}})\lesssim P(\EE_{\kk}).
\end{aligned}
\end{equation}
In $LB_3$, the term $\tpl\lkk\eta_{\alpha}$ cannot be directly bounded, but we can use \eqref{lkk2} in Lemma \ref{tgsmooth} to control this term by $(1/\sqrt{\kk})|\eta|_{3.5}$.

\begin{align}
LB_3&\lesssim|\p_3 h\ak^{3\gamma}\ak^{i\alpha}|_{L^{\infty}}|\ak^{3\beta}\tpl\lkk\eta_{\beta}|_0|\TP\lkk^2\psi|_{L^{\infty}}\frac{1}{\sqrt{\kk}}|\TP\TL\eta|_{0.5} \nonumber\\
&\lesssim\frac{1}{\sqrt{\kk}} P(\|\p^2\eta\|_2, \|v\|_3,\|h\|_{\hc})|\ak^{3\beta}\tpl\lkk\eta_{\beta}|_0|\TP\psi|_{L^{\infty}}. 
\label{LB3 start}
\end{align}
The factor $1/\sqrt{\kk}$ can be eliminated by plugging the expression of $\psi$ in \eqref{psi}. We apply Sobolev embedding $W^{1,4}(\R^2)\hookrightarrow L^{\infty}(\R^2)$ first, and note that $\TP\psi=P_{\geq 1}(\TP\TL^{-1}(\cdots))$ does not contain the low-frequency part, which (actually follows from the Littlewood-Paley characterization of $W^{1,4}$ and $\dot{W}^{1,4}$) implies $|\TP\psi|_{W^{1,4}}\approx|\TP\psi|_{\dot{W}^{1,4}}\approx|\TL\psi|_{L^4}$. Hence, we have
\[
|\TP\psi|_{L^{\infty}}\lesssim |\TL\psi|_{L^4}= \left|\mathbb{P}\underbrace{\left(\TL\eta_{\beta}\ak^{i\beta}\TP_i\lkk^2 v-\TL\lkk^2\eta_{\beta}\ak^{i\beta}\TP_i v\right)}_{f}\right|_{L^4}. 
\]
According to the Littlewood-Paley characterization of $L^4(\R^2)$ and the almost orthogonality property, we know
\begin{align*}
|\mathbb{P}f|_{L^4}\approx\left|\left(\sum_{N\in\Z}|\tilde{P}_NP_{\geq 1}f |^2\right)^{1/2}\right|_{L^4}\approx\left|\left(\sum_{N\geq 0}|\tilde{P}_N f |^2\right)^{1/2}\right|_{L^4}\lesssim\left|\left(\sum_{N\in\Z}|\tilde{P}_N f |^2\right)^{1/2}\right|_{L^4}\approx|f|_{L^4},
\end{align*}where $\tilde{P}$ is the Littlewood-Paley projection with respect to $\tilde{\chi}(\cdot):=\chi(2\cdot)$. 

\begin{rmk}
For more details of Littlewood-Paley characterization of Sobolev spaces, we refer readers to Chapter 1.3 in Grafakos \cite{GTM250} or Appendix A in Tao \cite{tao2006nonlinear}.
\end{rmk}

Hence, we have
\begin{align*}
|\TP\psi|_{L^{\infty}}&\lesssim\left|\TL\eta_{\beta}\ak^{i\beta}\TP_i\lkk^2 v-\TL\lkk^2\eta_{\beta}\ak^{i\beta}\TP_i v\right|_{L^4}\\
&\lesssim \left|\TL(\eta_{\beta}-\lkk^2\eta_{\beta})\ak^{i\beta}\TP_i\lkk^2 v-\TL\lkk^2\eta_{\beta}\ak^{i\beta}\TP_i(v-\lkk^2 v)\right|_{L^4}\\
&\lesssim |\TL(\eta_{\beta}-\lkk^2\eta_{\beta})|_{L^{\infty}}|\ak^{i\beta}|_{L^{\infty}}|\TP_i\lkk^2 v|_{0.5}+|\TL\ek_{\beta}|_{0.5}|\ak^{i\beta}|_{L^{\infty}}|\TP(v-\lkk v)|_{L^{\infty}} ,
\end{align*} where in the last step we use $H^{0.5}\hookrightarrow L^4$ in $\R^2.$ Now, recall \eqref{lkk3} in Lemma \ref{tgsmooth} that we are able to control $|\TL\eta_{\beta}-\lkk^2\TL\eta|_{L^{\infty}}$ by $\sqrt{\kk}|\TL\eta|_{1.5}\leq\sqrt{\kk}\|\p^2\eta\|_2.$ Similarly, $|\TP(v-\lkk v)|_{L^{\infty}}\lesssim\sqrt{\kk}|\TP v|_{1.5}\lesssim \sqrt{\kk}\|v\|_3$. Therefore, one has 
\[
|\TP\psi|_{L^{\infty}}\lesssim \sqrt{\kk} P(\|\eta\|_{\hc},\|v\|_3),
\]and 

\begin{equation}\label{LB3}
LB_3\lesssim P(\|\eta\|_{\hc},\|v\|_3,\|h\|_{\hc})|\ak^{3\beta}\tpl\lkk\eta_{\beta}|_0\lesssim P(\EE_{\kk}).
\end{equation}

As for $B_1^*$, it cannot be directly bounded, but together with another  term they will be exactly eliminated by the correction term in \eqref{I0}.

Now we start to control the third term in \eqref{I0}. Again we separate the normal derivative of $v$ from tangential derivatives
\begin{equation}\label{I3}
\begin{aligned}
&~~~~-\ig \p_3 h\ak^{3\alpha}\ak^{3\beta}\tpl \ek_{\beta}\tpl\ek\cdot\pak v_{\alpha}~dS \\
&=-\ig \p_3 h\ak^{3\alpha}\ak^{3\beta}\tpl \ek_{\beta}\tpl\ek_{\gamma}\ak^{3\gamma} \p_3v_{\alpha}~dS \underbrace{-\ig \p_3 h\ak^{3\alpha}\ak^{3\beta}\tpl \ek_{\beta}\tpl\ek_{\gamma}\ak^{i\gamma} \TP_iv_{\alpha}~dS}_{B_2^*} \\
&=\ig(-\p_3 h\ak^{3\alpha}\p_3v_{\alpha})(\ak^{3\beta}\tpl \ek_{\beta})(\ak^{3\gamma}\tpl\ek_{\gamma})\dS+B_2^* \\
&\lesssim|\ak^{3\beta}\tpl \ek_{\beta}|_0^2 P(\|v\|_3,\|\p\eta\|_{L^{\infty}},\|h\|_{\hc})+B_2^*\lesssim P(\EE_{\kk})+B_2^*,
\end{aligned}
\end{equation}where in the last step we control $|\ak^{3\beta}\tpl \ek_{\beta}|_0$ as follows
\begin{equation}\label{I31}
\begin{aligned}
|\ak^{3\beta}\tpl \ek_{\beta}|_0&\leq|\lkk(\ak^{3\beta}\tpl\lkk \eta_{\beta})|_0+|[\lkk,\ak^{3\beta}]\TP(\TP\TL\lkk\eta_{\beta})|_0 \\
&\lesssim|\lkk(\ak^{3\beta}\tpl\lkk \eta_{\beta})|_0+|\ak|_{W^{1,\infty}}|\TP^3\eta|_0\lesssim P(\|\eta\|_{\hc})\lesssim P(\EE_\kk).
\end{aligned}
\end{equation}

So far, what remains to be bounded is the second term in RHS of \eqref{I0}
\begin{equation}\label{I2}
I_2:=-\ig \p_3 h\ak^{3\alpha}\ak^{3\beta}\tpl \ek_{\beta}\tpl \psi\,dS,
\end{equation} and
\begin{equation}\label{B1s}
B_1^*=\ig \p_3 h \ak^{3\beta}\tpl\lkk\eta_{\beta}\ak^{3\gamma}\TP_i\lkk^2v_{\gamma}\ak^{i\alpha}\tpl\lkk\eta_{\alpha}\dS
\end{equation} and
\begin{equation}\label{B2s}
B_2^*=-\ig \p_3 h\ak^{3\alpha}\ak^{3\beta}\tpl \ek_{\beta}\tpl\ek_{\gamma}\ak^{i\gamma} \TP_iv_{\alpha}~dS
\end{equation}

Plugging the expression of $\psi$ in \eqref{psi} into \eqref{I2}, one has 
\begin{align}
\label{I210} I_2&=-\ig \p_3 h\ak^{3\alpha}\ak^{3\beta}\tpl \ek_{\beta}\TP^2(\TL\eta_{\gamma}\ak^{ir}\TP_i\lkk^2 v_{\alpha})\dS\\
\label{I220} &~~~~+\ig \p_3 h\ak^{3\alpha}\ak^{3\beta}\tpl \ek_{\beta}\TP^2\ek_{\gamma}\ak^{i\gamma} \TP_iv_{\alpha}\dS\\
\label{I230} &~~~~+\ig \p_3 h\ak^{3\alpha}\ak^{3\beta}\tpl \ek_{\beta}([\TP^2,\ak^{i\gamma}\TP_i v_{\alpha}]\TL\ek_{\gamma})\dS\\
\label{I240} &~~~~+\ig \p_3 h\ak^{3\alpha}\ak^{3\beta}\tpl \ek_{\beta}\TP^2P_{<1}\left(\TL\eta_{\beta}\ak^{i\beta}\TP_i\lkk^2 v-\TL\lkk^2\eta_{\beta}\ak^{i\beta}\TP_i v\right)\,dS.
\end{align} 
It is clear that \eqref{I220} exactly cancels with $B_2^*$ in \eqref{B2s}, and \eqref{I230} can be directly bounded
\begin{equation}\label{I23}
\begin{aligned}
\eqref{I230}&\lesssim |\p_3 h\ak^{3\alpha}|_{L^{\infty}}|\ak^{3\beta}\tpl \lkk\eta_{\beta}|_0 |[\TP^2,\ak^{i\gamma}\TP_i v_{\alpha}]\TL\ek_{\gamma}|_0\\
&\lesssim P(\|h\|_{\hc},\|\eta\|_{\hc},\|v\|_4,|\ak^{3\beta}\tpl\lkk \eta_{\beta}|_0)\lesssim P(\EE_{\kk}).
\end{aligned}
\end{equation}
For \eqref{I240}, one can apply Bernstein's inequality \eqref{bern1} in Lemma \ref{bernstein} and \eqref{I31} to get
\begin{equation}\label{I23'}
\begin{aligned}
\eqref{I240}&\lesssim |\p_3 h\ak^{3\alpha}|_{L^{\infty}}|\ak^{3\beta}\tpl \lkk\eta_{\beta}|_0 \left|P_{<1}\left(\TL\eta_{\beta}\ak^{i\beta}\TP_i\lkk^2 v-\TL\lkk^2\eta_{\beta}\ak^{i\beta}\TP_i v\right)\right|_{\dot{H}^2} \\
&\lesssim|\p_3 h\ak^{3\alpha}|_{L^{\infty}}|\ak^{3\beta}\tpl \lkk\eta_{\beta}|_0\cdot\left|\TL\eta_{\beta}\ak^{i\beta}\TP_i\lkk^2 v-\TL\lkk^2\eta_{\beta}\ak^{i\beta}\TP_i v\right|_0\\
&\lesssim P(\EE_{\kk}).
\end{aligned}
\end{equation}
For \eqref{I210}, we try to move one $\lkk$ on $\eta_{\beta}$ to $\eta_{\alpha}$ to produce the cancellation with $B_1^*$ in \eqref{B1s}:
\begin{align}
\label{I211} \eqref{I210}&=-\ig \p_3 h \ak^{3\beta}\tpl\lkk\eta_{\beta}(\ak^{3\alpha}\TP_i\lkk^2v_{\alpha})(\ak^{i\gamma}\tpl\lkk\eta_{\gamma})\dS \\
\label{I212} &~~~~-\ig \p_3 h \ak^{3\beta}\tpl\lkk\eta_{\beta}([\lkk,\ak^{3\alpha}\ak^{3\beta}\ak^{ir}\TP_i\lkk^2 v_{\alpha}]\tpl\eta_{\gamma})\dS \\
\label{I213} &~~~~-\ig \p_3 h\ak^{3\alpha}\ak^{3\beta}\tpl \ek_{\beta} ([\TP^2,\ak^{i\gamma}\TP_i\lkk^2v_{\alpha}]\TL\eta_{\gamma})\dS
\end{align}
Now we see that \eqref{I211} exactly cancels with $B_1^*$ in \eqref{B1s}. The terms in \eqref{I212} can be controlled by using the mollifier property \eqref{lkk6} after integrating $\TP^{0.5}$ by part (similar to the estimates of $LB_1$), and \eqref{I213} can be directly controlled by using Sobolev trace lemma. We omit the detailed computation here.
\begin{equation}\label{I2123}
\eqref{I212}+\eqref{I213}\lesssim P(\EE_{\kk}).
\end{equation}

Finally, summing up \eqref{I1}-\eqref{B2s}, \eqref{I2123} and plugging it into \eqref{I0}, we get the estimate for the boundary term $I$ after using the Taylor sign condition $\p_3 h\leq -c_0/2<0$:
\begin{equation}\label{tgbdry}
\int_0^T I(t)\dt\lesssim\frac{1}{2}\ig\p_3 h|\ak^{3\alpha}\tpl\lkk\eta_{\alpha}|^2 \dS +\int_0^T P(\EE_{\kk}(t))\dt \lesssim-\frac{c_0}{4}|\ak^{3\alpha}\tpl\lkk\eta_{\alpha}|_0^2+\int_0^T P(\EE_{\kk}(t))\dt
\end{equation}

Now, summing up \eqref{tgs1}, \eqref{tgs2}, \eqref{IKL0}, \eqref{L0}, \eqref{tgK} and \eqref{tgbdry}, we get the estimates for the Alinhac's good unknowns
\begin{equation}\label{tgsgood}
\|\VV(T)\|_0^2+\|e'(h)\tpl h(T)\|_0^2+|\ak^{3\alpha}\tpl\lkk\eta_{\alpha}|_0^2\lesssim\PP_0+\int_0^T P(\EE_{\kk}(t))\dt.
\end{equation} Finally, from the property of Alinhac's good unknowns \eqref{alinhac4}, we can get the estimates of $\TP^4v$ that
\begin{equation}\label{tgs}
\|\TP^4 v(T)\|_0^2+\|e'(h)\TP^4 h(T)\|_0^2+|\ak^{3\alpha}\tpl\lkk\eta_{\alpha}|_0^2\lesssim\PP_0+\int_0^T P(\EE_{\kk}(t))\dt.
\end{equation}

\subsection{Closing the $\kk$-independent a priori estimates}

We conclude this section by deriving the uniform-in-$\kk$ a priori bound for the energy functional $\EE_\kk$ of approximation system \eqref{app1}. Let $\mathcal{T}(t):= -\frac{1}{\p_3 h(t)}$. Then 
\begin{align}
\frac{d}{dt}\|\mathcal{T}(t)\|_{L^\infty}=\|\mathcal{T}(t)\|_{L^\infty}^2 \|\p_3 \p_t h(t)\|_{L^\infty} \leq \|\mathcal{T}(t)\|_{L^\infty}^2 \EE_\kk. 
\end{align}
This implies that the physical sign condition can be propagated if $\EE_\kk$ remains finite.  Next, by plugging \eqref{hodgev}, \eqref{v0l2}, \eqref{v0norm}, \eqref{curlv3}, \eqref{curlvtk}, \eqref{vbdry}-\eqref{vtttbdry}, \eqref{divvtk}, \eqref{tgt} and \eqref{tgs} into \eqref{Ekk}, with $\epsilon>0$ chosen sufficiently small, together with the estimates for $\|\p\eta\|_{L^\infty}$ and $\|\p^2 \eta\|_{2}$, i.e., 
\begin{align}
\|\p \eta\|_{L^\infty} &\leq \|\p \eta_0\|_{L^\infty}+\int_0^T \|v(t)+\psi(t)\|_{L^\infty}\,dt\lesssim \|\p \eta_0\|_{L^\infty}+\int_0^T \|v(t)\|_{2}+\|\psi(t)\|_2\,dt,\\
\|\p^2 \eta\|_{2}&\leq \|\p^2\eta_0\|_{2}+\int_0^T \|v(t)\|_2+\| \psi(t)\|_2\,dt,
\end{align}
we get
\begin{equation}\label{Ekk1}
\EE_\kk(T)\lesssim \PP_0+\int_0^T P(\EE_\kk (t))\dt.
\end{equation} 
Now, \eqref{Ekk0} follows from \eqref{Ekk1} and the Gronwall-type inequality in Tao \cite{tao2006nonlinear}, which finishes the proof of Proposition \ref{uniformkk}.

\section{Construction of the solution to the approximation system}\label{kkexist}

The goal of this section is to construct the solution to the $\kk$-approximation (nonlinear) system \eqref{app1} by an iteration of the approximate solutions $\{(v^{(n)},h^{(n)},\eta^{(n)})\}_{n=0}^{\infty}$. We start with $(v^{(0)},h^{(0)},\eta^{(0)})=(v^{(1)},h^{(1)},\eta^{(1)})=(0,0,\text{Id})$. Inductively, given $(v^{(n)},h^{(n)},\eta^{(n)})$ for some $n\geq 1$, we construct the $(n+1)$-th approximate solutions $(v^{(n+1)},h^{(n+1)},\eta^{(n+1)})$ from the linearization of \eqref{app1} near $a^{(n)}:=[\p\eta^{(n)}]^{-1}$:
\begin{equation}\label{linearn}
\begin{cases}
\p_t\eta^{(n+1)}=v^{(n+1)}+\psi^{(n)}~~~&\text{ in }\Omega, \\
\p_t v^{(n+1)}=-\nabla_{\ak^{(n)}}h^{(n+1)}-ge_3~~~&\text{ in }\Omega, \\
\text{div}_{\ak^{(n)}}v^{(n+1)}=-e'(h^{(n)})\p_t h^{(n+1)}~~~&\text{ in }\Omega, \\
h^{(n+1)}=0~~~&\text{ on }\Gamma, \\
(\eta^{(n+1)},v^{(n+1)}, h^{(n+1)})|_{t=0}=(\text{Id},v_0, h_0).
\end{cases}
\end{equation} Here $\ak^{(n)}:=[\p\ek^{(n)}]^{-1}$ and the correction term $\psi^{(n)}$ is determined by \eqref{psi} with $\eta=\eta^{(n)}, v=v^{(n)}, \ak=\ak^{(n)}$ in that equation. Specifically, we need following facts for the linearized approximation system \eqref{linearn} to construct a solution to the $\kk$-approximation (nonlinear) system \eqref{app1}:
\begin{itemize}
\item System \eqref{linearn} has a (unique) solution (in a suitable function space).

\item The solution of \eqref{linearn} constructed in the last step has an energy estimate uniformly in $n$.

\item The approximate solutions $\{(v^{(n)},h^{(n)},\eta^{(n)})\}_{n=0}^{\infty}$ converge strongly (in some Sobolev spaces).
\end{itemize}

\subsection{A priori estimates for the linearized approximation system}

Before we construct the solution of \eqref{linearn}, we would like to derive the uniform-in-$n$ a priori estimates for this system. Define the energy functional for \eqref{linearn} to be
\begin{equation}\label{En}
\EE^{(n+1)}(t):=\|\eta^{(n+1)}(t)\|_{\hc}^2+\sum_{k=0}^4\|\p_t^{4-k}v^{(n+1)}(t)\|_k^2+\left(\|h^{(n+1)}(t)\|_{\hc}^2+\sum_{k=0}^3 \|\p_t^{4-k}h^{(n+1)}(t)\|_k^2\right)+W^{(n+1)}(t),
\end{equation}where $W^{(n+1)}$ is the energy functional for the 5-th order wave equation of $h^{(n+1)}$
\begin{equation}\label{Wn}
W^{(n+1)}(t):=\sum_{k=0}^4\|\p_t^{5-k}h^{(n+1)}(t)\|_k^2+\sum_{k=0}^3\|\p_t^{4-k}\nabla_{\ak^{(n)}}h^{(n+1)}(t)\|_k^2+\|\nabla_{\ak^{(n)}}h^{(n+1)}(t)\|_{L^\infty}^2+\|\p \nabla_{\ak^{(n)}}h^{(n+1)}(t)\|_3^2.
\end{equation}
\begin{rmk}
The last two terms in \eqref{Wn} can be simplified to $\|\nabla_{\ak^{(n)}}h^{(n+1)}(t)\|_4^2$ if $\Omega$ is bounded. In this case, the wave energy becomes $W^{(n+1)} = \sum_{k=0}^4(\|\p_t^{5-k}h^{(n+1)}\|_k^2+\|\p_t^{4-k}\nabla_{\ak^{(n)}}h^{(n+1)}\|_k^2)$.
\end{rmk}

Our conclusion is 
\begin{prop}\label{Enenergy}
For the solution $(v^{(n+1)},h^{(n+1)},\eta^{(n+1)})$ of \eqref{linearn}, there exists $T_{\kk}>0$ sufficiently small, depending only on $\kk>0$ such that 
\begin{equation}\label{Enuniform}
\sup_{0\leq t\leq T_\kk} \EE^{(n+1)}(t)\lesssim \PP_0.
\end{equation}
\end{prop}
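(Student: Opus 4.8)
\textbf{Proof proposal for Proposition \ref{Enenergy}.} The plan is to prove a uniform-in-$n$ energy estimate of the form $\EE^{(n+1)}(t)\le \PP_0 + \int_0^t P(\EE^{(n)}(\tau),\EE^{(n+1)}(\tau))\,d\tau + C(\kk)\int_0^t\bigl(1+\EE^{(n)}(\tau)\bigr)^{?}\,d\tau$, and then run the usual bootstrap: assume inductively that $\sup_{[0,T_\kk]}\EE^{(n)}\le 2\PP_0$ (say), and close the argument by picking $T_\kk$ small enough, depending on $\kk$. Because $n$ is finite and the smoothed flow map $\ek^{(n)}=\lkk^2\eta^{(n)}$ has \emph{extra} tangential regularity ($\|\ek^{(n)}\|_{H^{4.5}}\lesssim \kk^{-1/2}\|\eta^{(n)}\|_{H^4}$, etc.), we are allowed to lose powers of $\kk$ here — this is why the linearized problem is much easier than the nonlinear uniform estimate of Section \ref{kkapriori}, and why the extra wave energy $W^{(n+1)}$ can be carried along. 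First I would record, via Lemma \ref{etapsi} applied to $\eta^{(n)}$ (and its proof, which only uses the structure of \eqref{psi}), the bounds on $\psi^{(n)}$ and $\p_t^k\psi^{(n)}$ in terms of $\EE^{(n)}$, so that $\eta^{(n+1)}$ obeys $\|\p\eta^{(n+1)}\|_{L^\infty}$, $\|\p^2\eta^{(n+1)}\|_2$, $\|\p_t^{4-k}\eta^{(n+1)}\|_k$ $\lesssim \PP_0 + \int_0^t P(\EE^{(n)},\EE^{(n+1)})$ just by integrating the first equation of \eqref{linearn} in time.

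The core of the argument splits exactly as in Section \ref{kkapriori} but one step simpler, since the coefficients $\ak^{(n)}$ are now \emph{given} (not the unknown), so no commutator of the type $[\TP^4,\ak]\p(\cdot)$ needs the Alinhac good-unknown trick and everything can be done by brute force at the cost of $\kk$-dependent constants. Concretely: (i) \emph{Wave energy $W^{(n+1)}$.} Commute $\DD^3\p_t$ with $\DD\in\{\cp,\p_t\}$ through the wave equation \eqref{eq h} and do the standard energy estimate; the only dangerous source term $\p_t\ak^{(n)}\,\cp^4\p_\nu v^{(n+1)}$ is avoided because we commute one $\p_t$ in first, and $\cp^3\p_t\p v^{(n+1)}$ is controlled from \eqref{eq v} together with $\|\p^4\nab_{\ak^{(n)}}h^{(n+1)}\|_0$, which in turn is handled by the interior elliptic estimate Lemma \ref{GLL} applied with $\ek=\ek^{(n)}$, writing $\|\lap_{\ak^{(n)}}h^{(n+1)}\|_3\le \|\p_t^2 h^{(n+1)}\|_3 + \|\p_t\ak^{(n)}\p v^{(n+1)}\|_3$ and feeding back the $2$-time-derivative wave energy. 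The term $\|\cp\p\ek^{(n)}\|_3$ appearing in Lemma \ref{GLL} costs a factor $\kk^{-1/2}$, which is harmless here. (ii) \emph{Div-curl for $v^{(n+1)}$.} As in Section \ref{divcurl}: the curl solves a transport-type equation obtained from $\curl$-ing \eqref{eq v}, the divergence equals $-e'(h^{(n)})\p_t h^{(n+1)}$ up to commutators and so is controlled by the $h$-energies just bounded, and the normal trace / Hodge decomposition (Lemmas \ref{normaltrace}, \ref{hodge}) reduce everything to tangential plus div plus curl. (iii) \emph{Tangential estimates of $v^{(n+1)}$.} For the time-derivative-heavy ones $\p_t^{4-k}\cp^k v^{(n+1)}$ with $k\le 3$, imitate Section \ref{tgtime}: pair $\dd^4 v^{(n+1)}$ with $\p_t\dd^4 v^{(n+1)}$, integrate $\p_\mu$ by parts using $h^{(n+1)}|_\Gamma=0$, and bound the commutators $[\dd^4,\ak^{(n)}]\p h^{(n+1)}$ and $[\dd^4,\ak^{(n)}]\p v^{(n+1)}$ in $L^2$ using the $\eta^{(n)}$-bounds of step (i); this produces $-\frac{d}{dt}\frac12\int e'(h^{(n)})|\dd^4 h^{(n+1)}|^2$ plus $P$-terms. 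For the pure-spatial one $\|\TP^4 v^{(n+1)}\|_0$, here — unlike the nonlinear estimate — I can afford the commutator $[\TP^4,\ak^{(n)}]\p v^{(n+1)}$ in $L^2$ by putting $\TP^4\ak^{(n)}=\TP^4\lkk^2\p\eta^{(n)}$ in $L^2$ at a cost of $\kk^{-1}\|\p\eta^{(n)}\|_{H^3}$, so no good unknowns are needed; the boundary term is handled exactly by the cancellation scheme of Section \ref{bdrycancel} with the correction term $\psi^{(n)}$, producing the Rayleigh–Taylor boundary energy and using $-\p_3 h^{(n+1)}\ge c_0/2$ (which is propagated for short time from $-\p_3 h_0\ge c_0$ since $\|\p_3\p_t h^{(n+1)}\|_{L^\infty}$ is now controlled).

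Assembling (i)–(iii) and the $\eta^{(n+1)}$-bounds gives
\begin{equation}\label{En recursion}
\EE^{(n+1)}(t)\le \PP_0 + \int_0^t P\bigl(\EE^{(n)}(\tau)\bigr)\,d\tau + C(\kk)\int_0^t P\bigl(\EE^{(n)}(\tau)\bigr)\EE^{(n+1)}(\tau)\,d\tau,\qquad t\in[0,T_\kk],
\end{equation}
where the $C(\kk)$-terms collect all the $\kk^{-1/2}$ and $\kk^{-1}$ losses. Under the induction hypothesis $\sup_{[0,T_\kk]}\EE^{(n)}\le 2\PP_0$, the middle term is $\le t\,P(2\PP_0)$ and Grönwall in \eqref{En recursion} (with the $\kk$-dependent linear-in-$\EE^{(n+1)}$ factor) gives $\sup_{[0,T_\kk]}\EE^{(n+1)}\le \bigl(\PP_0 + T_\kk P(2\PP_0)\bigr)e^{C(\kk)T_\kk P(2\PP_0)}$, which is $\le 2\PP_0$ once $T_\kk=T_\kk(\kk,\PP_0)$ is chosen small; this closes the induction and yields \eqref{Enuniform}. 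The main obstacle — and the one place real care is needed — is step (i): one must commute time derivatives into the wave equation \emph{in the right order} so that the top-order source term never forces four tangential spatial derivatives onto $v^{(n+1)}$, and then one must verify that the interior elliptic estimate of Lemma \ref{GLL} genuinely closes $\|\nab_{\ak^{(n)}}h^{(n+1)}\|_{\dot H^4}$ against the $2$-time-derivative wave energy rather than against itself, since $\p h^{(n+1)}$ is only in $L^\infty(\Omega)$ (not $L^2$) in the unbounded setting. Everything else is a routine but lengthy transcription of Sections \ref{kkapriori}.\ref{hapriori}–\ref{kkapriori}.\ref{tgspace} with $\kk$-dependent constants permitted.
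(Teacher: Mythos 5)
Your overall strategy is sound and matches the paper's in spirit: proceed by induction on $n$, allow $\kk$-dependent constants everywhere, record the $\psi^{(n)}$ bounds via Lemma \ref{etapsi}, control the fifth-order wave energy $W^{(n+1)}$ by commuting a time derivative in first (so the source never sees $\cp^4\p v^{(n+1)}$), close $\|\nab_{\ak^{(n)}}h^{(n+1)}\|_{\dot H^4}$ via Lemma \ref{GLL} against the two-time-derivative wave energy, and reduce the remaining $v$-norms to tangential estimates by div-curl. The paper then closes with a nonlinear Gr\"onwall (\eqref{Enfinal}), essentially as you describe.

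The flaw is in your step (iii), and it is an internal inconsistency rather than a small slip. You propose to \emph{both} skip the Alinhac good unknowns (absorbing the commutator $[\TP^4,\ak^{(n)}]\p v^{(n+1)}$ at cost $\kk^{-1}$, which is fine) \emph{and} "handle the boundary term exactly by the cancellation scheme of Section \ref{bdrycancel}, producing the Rayleigh--Taylor boundary energy." These two halves do not coexist. If you do not pass to good unknowns, the boundary integral you encounter after integrating $\p_\mu$ by parts is $\int_\Gamma \TP^4 h^{(n+1)}\,\ak^{(n)3\alpha}\TP^4 v^{(n+1)}_\alpha\,dS$, and it vanishes identically because $h^{(n+1)}|_\Gamma=0$: there is nothing to cancel, and no boundary energy is (or should be) produced --- indeed the linearized energy $\EE^{(n+1)}$ in \eqref{En} deliberately omits the term $|\ak^{3\alpha}\TP^4\lkk\eta_\alpha|_0^2$. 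Conversely, if you do pass to good unknowns (as the paper does, defining $\VVr,\HHr$ in \eqref{vrgood}--\eqref{hrgood}), the cancellation scheme of Section \ref{bdrycancel} does not survive the linearization: the correction $\psi^{(n)}$ is assembled from the \emph{given} $n$-th iterate $(\eta^{(n)},v^{(n)},\ak^{(n)})$, whereas the terms to be cancelled now carry $\eta^{(n+1)}$ and $v^{(n+1)}$, so the symmetry that the cancellation relies on is broken; this is precisely the obstruction the paper points out in the discussion around \eqref{wwlll}. The paper's actual route is to \emph{keep} the good unknowns for structural consistency and then \emph{abandon} the cancellation, bounding the boundary term brutally with a factor $\kk^{-1}$ in \eqref{Ir} (fractional integration by parts, Kato--Ponce, $H^{0.5}\hookrightarrow L^4$). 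Either keep good unknowns and estimate the boundary term as in \eqref{Ir}, or drop them and observe the boundary term vanishes --- but you cannot invoke the $\psi^{(n)}$-cancellation in the linearized problem.

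Two smaller remarks. First, your recursion \eqref{En recursion} should really have $P(\EE^{(n+1)})$ rather than a linear factor $\EE^{(n+1)}$ on the right, so the closure is a nonlinear (Tao-type) Gr\"onwall argument, as in \eqref{Enfinal}, rather than the exponential bound you wrote; this is a cosmetic fix. Second, the cost of $\|\cp\p\ek^{(n)}\|_3$ in Lemma \ref{GLL} is $\kk^{-1}\|\p^2\eta^{(n)}\|_2$ (one derivative traded for one factor of $\kk^{-1}$ by \eqref{lkk2} with $s=1$), not $\kk^{-1/2}$ --- again harmless here, but worth getting right.
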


\begin{rmk}
As we will see in the following computation, the control of 4-th order derivatives of $v$ and $h$ does not need the energy of 5-th order wave equation of $h$; the only important difference from the a priori estimates for \eqref{app1} is the boundary term \eqref{Ir} for which we apply the property of tangential smoothing to give a direct control with an extra factor $1/\kk$, instead of producing subtle cancellation as in Section \ref{tgspace}. \textit{However, we included $W^{(n+1)}$ in $\EE^{(n+1)}$ since we need this constraint when constructing the function space when proving the existence of the solution to the linearized system.}
\end{rmk}

We prove Proposition \ref{Enenergy} by induction on $n$. First, when $n=-1,0$, then the conclusion automatically holds because of $(v^{(0)},h^{(0)},\eta^{(0)})=(v^{(1)},h^{(1)},\eta^{(1)})=(0,0,\text{Id})$. Suppose uniform bound holds for all positive integers$\leq n-1$. Then from the induction hypothesis, one has 
\begin{equation}\label{Eninduction}
\forall k\leq n,~~\sup_{0\leq t\leq T_\kk} \EE^{(k)}(t)\lesssim \PP_0.
\end{equation}

We would like to first simplify our notation before we derive the energy estimate for $(v^{(n+1)},h^{(n+1)},\eta^{(n+1)})$. We denote $(v^{(n)},h^{(n)},\eta^{(n)})$ by $(\vr,\hr,\er)$ and $\ar:=[\p\er]^{-1}$, $\Jr:=\det[\p\er]$; and $(v^{(n+1)},h^{(n+1)},\eta^{(n+1)})$ by $(v,h,\eta)$. The smoothed version of $\ar,\er,\Jr$ are denoted by $\ark,\erk,\Jrk$ respectively. Besides, we define $\sigma:=e'(\hr)$. Now, the linearized system \eqref{linearn} becomes
\begin{equation}\label{linearr}
\begin{cases}
\p_t\eta=v+\psir~~~&\text{ in }\Omega, \\
\p_t v=-\park h-ge_3~~~&\text{ in }\Omega, \\
\divr v=-\sigma \p_t h~~~&\text{ in }\Omega, \\
h=0~~~&\text{ on }\Gamma, \\
(\eta,v, h)|_{t=0}=(\text{Id},v_0, h_0).
\end{cases}
\end{equation}We note that the initial data of \eqref{linearr} is the same as the original system \eqref{wwl} because $\ar|_{t=0}=a|_{t=0}=I$.

\subsubsection{Uniform-in-$n$ bounds for the coefficients}\label{linearapriori}

The energy functional for $\vr,\hr,\er$ reads 
\begin{equation}\label{Er}
\mathring{\EE}:=\|\er\|_{\hc}^2+\sum_{k=0}^4\|\p_t^{4-k}\vr\|_k^2+\left(\|\hr\|_{\hc}^2+\sum_{k=0}^3\|\p_t^{4-k}\hr\|_k^2\right)+\mathring{W},
\end{equation}
where $\mathring{W}$ is the energy functional for the 5-th order wave equation of $\hr$, i.e., 
\begin{equation}\label{Wr}
\mathring{W}:=\sum_{k=0}^4\|\p_t^{5-k}\hr\|_k^2+\sum_{k=0}^3\|\p_t^{4-k}\nabla_{\ark^{(n-1)}}\hr\|_k^2+\|\nabla_{\ark^{(n-1)}}\hr\|_{L^\infty}^2+\|\p\nabla_{\ark^{(n-1)}}\hr\|_{3}^2.
\end{equation}
We have
\begin{align} \label{induction'}
\sup_{0\leq t\leq T_\kk}\mathring{\EE}(t) \lesssim \PP_0
\end{align}
in light of the induction hypothesis \eqref{Eninduction}. \\In addition, we have the following bounds for $\ar,\er,\Jr$ provided they hold for $(v^{(k)},h^{(k)},\eta^{(k)})$ for $k\leq n-1$. The control of these quantities are important when we do the uniform-in-$n$ a priori estimates and construct the solution for system \eqref{linearr}.
\begin{lem}\label{apriorir}
Let $T\in(0,T_\kk)$. There exists some $0<\epsilon<<1$ and $N>0$ such that
\begin{align}
\label{psir} \psir&\in L_t^{\infty}([0,T];H^4(\Omega)),~~\p_t^l\psir\in  L_t^{\infty}([0,T];H^{5-l}(\Omega)),~~\forall 1\leq l\leq 4; \\
\label{Ida} \|\Jr-1\|_3&+\|\Jrk-1\|_3+\|\text{Id}-\ark\|_3+\|\text{Id}-\ar\|_3\leq\epsilon ;\\
\label{ehc} \p \er&\in L^{\infty}([0,T]; L^\infty(\Omega)),\quad \p^2\er \in L^{\infty}([0,T];H^2(\Omega));\\
\label{etart} \p_t\er&\in L^{\infty}([0,T];H^4(\Omega));\\
\label{etartt} \p_t^{l+1}\er&\in L^{\infty}([0,T];H^{5-l}(\Omega)),~~\forall 1\leq l\leq 4; \\
\label{Jr} \Jr&\in L^{\infty}([0,T];L^{\infty}(\Omega)),~~\p\Jr\in L^{\infty}([0,T];H^2(\Omega);\\
\label{Jtr} \p_t\Jr&\in L^{\infty}([0,T];H^3(\Omega)),~~\p_t^{l+1}\Jr\in L^{\infty}([0,T];H^{4-l}(\Omega)),~~\forall 1\leq l\leq 4; \\
\label{weightr} 1/N\leq\sigma\leq N,&~ \p_t^{l}\sigma\in L^{\infty}([0,T];H^{5-l}(\Omega)),~~\forall 1\leq l\leq 5.
\end{align}
Particularly, we have
\begin{align} \label{circile vari estimate}
\sup_{t\in [0,T]}\Bigg(\|\er\|_{\hc}&+\|\p_t \er(t)\|_{4}+\sum_{1\leq l\leq 4}\|\p_t^{l+1}\er(t)\|_{5-l}+ \|\Jr(t)\|_{L^\infty}\nonumber\\
&+\|\p \Jr(t)\|_{2} + \|\p_t \Jr(t)\|_{3}+\sum_{1\leq l\leq 4}\|\p_t^{l+1}\Jr\|_{4-l}+\sum_{1\leq l\leq 5} \|\p_t^l\sigma(t)\|_{5-l} \Bigg)\leq \PP_0.
\end{align} 
\end{lem}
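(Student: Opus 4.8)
The plan is to derive all estimates \eqref{psir}--\eqref{weightr} and the summary bound \eqref{circile vari estimate} from the induction hypothesis \eqref{induction'}, which controls $\mathring{\EE}$ (hence $\mathring\EE$ at every level $\leq n$), together with the defining relations in \eqref{linearr} and the smoothing properties in Lemma \ref{tgsmooth}. The whole argument is bookkeeping: every object listed is a product, quotient, or time/space derivative of $\er$, $\vr$, $\hr$, $\psir$, and each of these is already bounded by $\PP_0$ (modulo a short time $T_\kk$) thanks to \eqref{induction'}. First I would record the consequences of \eqref{induction'} for $\er$: since $\p_t\er = \lkk^2(\vr+\psir)$ and $\p_t^{l+1}\er=\lkk^2 \p_t^l(\vr+\psir)$, the bounds \eqref{ehc}--\eqref{etartt} follow from \eqref{lkk11}, \eqref{lkk1}, the estimates on $\p_t^l\psir$ (see below), and $\|\er\|_\hc\lesssim\|\er\|_\hc$ via \eqref{etaLinfty}--\eqref{eta4} with $\eta$ replaced by $\er$. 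For \eqref{etaLinfty}-type bounds at the level of $\er$ one integrates $\p_t\p\er$ in time starting from $\p\er|_{t=0}=I$, which is where the smallness of $T_\kk$ enters.

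Next I would handle $\psir$: it solves the half-space Laplace problem \eqref{psi} with data built from $\er,\vr,\ark$, so by the harmonic-function trace lemma (Lemma \ref{harmonictrace}), the Bernstein inequality \eqref{bern2}, and the Kato--Ponce product estimate \eqref{product}, exactly the computations in Lemma \ref{etapsi} (equations \eqref{psi040}--\eqref{psittt20}) give $\|\p_t^l\psir\|_{5-l}\lesssim P(\|\p\er\|_{L^\infty},\|\p^2\er\|_2,\|\vr\|_4,\ldots)\lesssim\PP_0$, which is \eqref{psir}. (The $\dot H^{-0.5}$ duality trick in \eqref{X1}--\eqref{X3} is needed only for the top time-derivative count, just as before.) The bounds for $\Jr=\det(\p\er)$ and $\Jrk=\det(\p\erk)$ in \eqref{Jr}--\eqref{Jtr} follow from the cofactor formula and \eqref{eq of J} (i.e.\ $\p_t\Jr=\Jr\ar^{\mu\alpha}\p_\mu(\vr+\psir)_\alpha$) together with the already-established control of $\p_t^l\er$; the quotient bounds $\|\mathrm{Id}-\ar\|_3$, $\|\mathrm{Id}-\ark\|_3$, $\|\Jr-1\|_3$, $\|\Jrk-1\|_3\leq\epsilon$ in \eqref{Ida} come from writing $\ar=[\p\er]^{-1}$, $\ar-I = \ar(I-\p\er)$, Moser-type product estimates, and then integrating $\p_t(\p\er-I)$ in time from $0$ and choosing $T_\kk$ small (and invoking \eqref{lkk11} for the smoothed versions). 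Finally \eqref{weightr} for $\sigma=e'(\hr)$ is immediate from the structural hypothesis \eqref{e(h) cond} on $e$ (which gives $1/N\le\sigma\le N$ and controls all derivatives $e^{(k)}$) composed with the $H^{5-l}$ control of $\p_t^l\hr$ from \eqref{induction'}, using the chain rule and product estimates.

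The summary estimate \eqref{circile vari estimate} is then just the assembly of the individual bounds already obtained, taking a supremum over $t\in[0,T]$ and absorbing all the generic polynomials $P(\cdots)$ evaluated at quantities $\lesssim\PP_0$ into a single constant depending on $\PP_0$. I expect the only genuinely delicate point to be the top-order time-derivative estimate for $\psir$, namely $\|\p_t^3\psir\|_2$ (equivalently the $l=3$ case toward \eqref{psir}): there the leading term in $\p_t^3$ of the boundary data is of the form $(\p_t^3\TL\er)\,\ark\,\TP\vr$ or $\TL\er\,\ark\,(\p_t^3\TP\vr)$, which is only in $L^2(\Omega)$ (not on $\Gamma$), so one must run the $\dot H^{-0.5}$--$\dot H^{0.5}$ duality argument of \eqref{X1}--\eqref{X3} verbatim, integrating a half tangential derivative by parts on $\Gamma$ and using $|\cdot|_{\dot H^{0.5}}$ bounds via the trace lemma plus \eqref{product}. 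Everything else is routine application of Lemma \ref{tgsmooth}, Moser estimates, and time-integration from the identity initial data on a short interval $[0,T_\kk]$.
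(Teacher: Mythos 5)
Your overall plan (pure bookkeeping from the induction hypothesis \eqref{induction'}, the transport identities, and the mollifier properties) matches the structure of the paper's argument, and the parts about $\Jr$, $\ark$, $\ar$, and $\sigma=e'(\hr)$ are essentially right. However, you have mis-located where the real difficulty lies, and as a result you miss the single step that actually requires the wave energy $\mathring{W}$, which is the reason this lemma is stated the way it is.

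First, a notational slip worth fixing: $\p_t\er=\vr+\psi^{(n-1)}$ (no $\lkk^2$, and the correction term is $\psi^{(n-1)}$, not $\psir=\psi^{(n)}$); $\lkk^2$ only enters for $\erk$. More substantively, you identify $\|\p_t^3\psir\|_2$ ($l=3$) as the delicate top-order case, but \eqref{psi4}--\eqref{psittt2} in Lemma~\ref{etapsi} already cover $l=0,1,2,3$, and those quantities are controlled by $\|\p_t^3\vr\|_1$ which sits in the \emph{main} energy. The genuinely new case is $l=4$, i.e.\ $\|\p_t^4\psir\|_1$, whose duality argument reduces to $|\p_t^4\vr|_{\dot H^{0.5}(\Gamma)}\lesssim\|\p_t^4\vr\|_1$. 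But $\|\p_t^4\vr\|_1$ is one derivative \emph{more} than the main energy provides ($\|\p_t^4 v\|_0$ only). The missing ingredient is the second equation of the linearized system, $\p_t\vr=-\nabla_{\ak^{(n-1)}}\hr-ge_3$, which gives $\p_t^4\vr=-\p_t^3\nabla_{\ak^{(n-1)}}\hr$, and the quantity $\|\p_t^3\nabla_{\ak^{(n-1)}}\hr\|_1$ lives in the 5-th order wave energy $\mathring{W}$. Exactly the same mechanism is needed to upgrade $\p_t^{l+1}\er$ from $H^{4-l}$ (what Lemma~\ref{etapsi} gives verbatim) to the claimed $H^{5-l}$: e.g.\ $\p_t^2\er\in H^4$ requires $\|\p_t\vr\|_4$, which uses $\|\p\nabla_{\ark^{(n-1)}}\hr\|_3$ from $\mathring{W}$. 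Your citations \eqref{etaLinfty}--\eqref{eta4} and \eqref{psi040}--\eqref{psittt20} only reproduce the unimproved bounds, so as written your proof would give one derivative less than the lemma asserts. This gap is not cosmetic: the whole point of including $W^{(n+1)}$ in the iteration energy $\EE^{(n+1)}$ is to close precisely this loop.
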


\begin{proof}
First, the bound for $\psir$ and $\p_t^{l}\psir $ for $1\leq l\leq 3$ directly follows from \eqref{psi4}-\eqref{psittt2} in Lemma \ref{etapsi}. Then the identity $$\text{Id}-\ar=-\int_0^t \p_t \ar=\int_0^t \ar:(\p \p_t\er):\ar=\int_0^t \ar:(\p (\vr+\psi^{(n-1)})):\ar$$ yields \eqref{Ida} by choosing $\epsilon$ suitably small (depending on $T_\kk$). Similar results hold for $\Jr$.

As for $\er$, $\p_t\er=\vr+\psi^{(n-1)}$ gives \eqref{ehc} and \eqref{etart}. Taking $\p_t^l$ in this equation and combining the induction hypothesis on $\mathring{E}$ and $\psi^{(n-1)}$ we can get the bound for $\p_t^{l+1}\er$ in \eqref{etartt}. For $\Jr$, recall $\Jr:=\det[\p\er]$ which equals a multi-linear funtion of its elements $\p\er$. So the bound for $\er$ and $\p_t^l \er$ yields the bounds for $\p_t^l \Jr$.

To conclude the proof, it suffices to control $\|\p_t^4 \psir\|_1$. From \eqref{psitk}, we know 
\begin{equation}\label{psir4}
\begin{cases}
\Delta \p_t^4\psir=0  &~~~\text{in }\Omega, \\
\p_t^4\psir=\TL^{-1}\mathbb{P}\p_t^4\left(\TL\er_{\beta}\ark^{i\beta}\TP_i\lkk^2 \vr-\TL\lkk^2\er_{\beta}\ark^{i\beta}\TP_i \vr\right) &~~~\text{on }\Gamma.
\end{cases}
\end{equation}
Using Lemma \ref{harmonictrace} for harmonic functions, we know
\begin{align*}
\|\p_t^4\psir\|_1&\lesssim|\p_t^4\psir|_{0.5}=\left|\TL^{-1}\mathbb{P}\p_t^4\left(\TL\er_{\beta}\ark^{i\beta}\TP_i\lkk^2 \vr-\TL\lkk^2\er_{\beta}\ark^{i\beta}\TP_i \vr\right)\right|_{0.5} \\
&\lesssim|\mathbb{P}\p_t^4\left(\TL\er_{\beta}\ark^{i\beta}\TP_i\lkk^2 \vr-\TL\lkk^2\er_{\beta}\ark^{i\beta}\TP_i \vr\right)|_{\dot{H}^{-1.5}}\\
&\lesssim|\mathbb{P}\p_t^4\left(\TL\er_{\beta}\ark^{i\beta}\TP_i\lkk^2 \vr-\TL\lkk^2\er_{\beta}\ark^{i\beta}\TP_i \vr\right)|_{\dot{H}^{-0.5}},
\end{align*}where we used the Bernstein inequality \eqref{bern2} and the definition of $\mathbb{P}$ (restrict $|\xi|\gtrsim 1$ to get the last inequality).

The most difficult terms appear when $\p_t^4$ falls on $\TL\er$ or $\TP \vr$. Here we only show how to control $\TL\lkk^2\er_{\beta}\ark^{i\beta}\TP_i\p_t^4 \vr$ and the rest highest order terms can be controlled in the same way. For any test function $\phi\in \dot{H}^{0.5}(\R^2)$ with $|\phi|_{\dot{H}^{0.5}}\leq 1$, we consider
\begin{align*} 
|\langle \TL\lkk^2\er_{\beta}\ark^{i\beta}\TP_i\p_t^4 \vr, \phi\rangle| &=|\langle \TP_i\p_t^4 \vr,\TL\lkk^2\er_{\beta}\ark^{i\beta}\phi\rangle|\\
&=|\langle \TP^{0.5}\p_t^4 \vr, \TP^{0.5}(\TL\lkk^2\er_{\beta}\ark^{i\beta}\phi)\rangle| \\
&\lesssim|\p_t^4 v|_{\dot{H}^{0.5}} |\TL\lkk^2\er_{\beta}\ark^{i\beta}\phi|_{\dot{H}^{0.5}} \\
&\lesssim \|\p_t^4 v\|_1(|\phi|_{\dot{H}^{0.5}}|\TL\lkk^2\er\ark|_{L^{\infty}}+|\phi|_{L^4}|\TL\er|_{\dot{W}^{0.5,4}}|\ar|_{L^{\infty}})\\
&\lesssim  (\|\p_t^4 v\|_1\|\p^2\er\|_2\|\p\er\|_{L^{\infty}})|\phi|_{\dot{H}^{0.5}},
\end{align*} where we used $\dot{H}^{-0.5}$-$\dot{H}^{0.5}$ duality and Kato-Ponce inequality \eqref{product}. Taking supremum over all  $\phi\in \dot{H}^{0.5}(\R^2)$ with $|\phi|_{\dot{H}^{0.5}}\leq 1$, we obtain 
\[
\|\TL\lkk^2\er_{\beta}\ark^{i\beta}\TP_i\p_t^4 \vr\|_{\dot{H}^{-0.5}}\lesssim  \|\p_t^4 \vr\|_1\|\p^2\er\|_2\|\p\er\|_{L^{\infty}},
\] and thus gives the bound for $\|\p_t^4\psir\|_1$.
From the second equation of \eqref{linearn} we know that $\p_t^4 \vr=-\p_t^3\nabla_{\ak^{(n-1)}} \hr$, of which the $H^1$-norm of the RHS is exactly in the energy functional $\EE^{(n-1)}$ as in \eqref{En}-\eqref{Wn}. So $\|\p_t^4\psir\|_{1}$ is bounded by the induction hypothesis.
 
It remains to control $\|\p_t^5\er\|_1$ which also gives the bounds for $\p_t^5\Jr$. Taking $\p_t^4$ in the first equation of \eqref{linearr} we have $\p_t^5\er=\p_t^4 \vr+\p_t^4 \psi^{(n-1)}$ and $\p_t^4 \psi^{(n-1)}$ can be bounded in $H^1$ in the same way as above. Lastly, \eqref{circile vari estimate} follows from the estimates above and \eqref{induction'}. 
\end{proof}

\subsubsection{Uniform-in-$n$ a priori estimates for the linearized system}\label{linear4}

With the inductive hypothesis \eqref{Eninduction} and Lemma \ref{apriorir}, we are now able to control the energy functional for $(v,h,\eta)$ which solves the system \eqref{linearr}. Let
\begin{equation}\label{El}
\EE^{(n+1)}:=\|\eta\|_{\hc}^2+\sum_{k=0}^4\|\p_t^{4-k}v\|_k^2+\left(\|h\|_{\hc}^2+\sum_{k=0}^3\|\p_t^{4-k}h\|_k^2\right)+W^{(n+1)},
\end{equation}
where $W^{(n+1)}$ is the energy functional for the 5-th order wave equation of $h$
\begin{equation}\label{Wl}
W^{(n+1)}:=\sum_{k=0}^4\|\p_t^{5-k}h\|_k^2+\sum_{k=0}^3\|\p_t^{4-k}\park h\|_k^2+\|\park h\|_{L^\infty}^2+\|\p\park h\|_{3}^2.
\end{equation}

The estimate for $\EE^{(n+1)}-W^{(n+1)}$ is quite similar (actually a bit easier) to what we have done in Sect. \ref{kkapriori}, so we will not go over all the details, but still point out the different steps, especially the boundary term control, because we no longer need $\kk$-independent estimates.

\paragraph*{Step 1: Estimates for $h$}

The lower order term $\|\p h\|_{L^\infty}$ can be treated identically as what appears in Section \ref{lower order h}.  The arguments in Sections \ref{tangential h}-\ref{lessnormal} suggest that the key to control the top order Sobolev norm of $h$ is to study the wave equation 
\begin{equation}\label{hrwave0}
\Jrk\sigma \p_t^2 h-\p_{\mu}(\mathring{E}^{\nu\mu}\p_{\mu}h)=-\Jrk\p_t\ark^{\nu\alpha}\p_\nu v_{\alpha}-\Jrk\p_t\sigma\p_t h, \quad \text{where}\,\, \mathring{E}^{\nu\mu}:=\Jrk\ark^{\nu\alpha}\ark^{\mu}_{\alpha},
\end{equation}
which is obtained by taking $\Jrk\divr$ in the second equation of \eqref{linearr}.  Let $\dd=\TP$ or $\p_t$. We take $\dd^3$ in \eqref{hrwave0} to get
\begin{equation}\label{hrwave3}
\Jrk\sigma \p_t^2\dd^3 h-\p_\nu(\mathring{E}^{\nu\mu}\dd^3\p_{\mu}h)=-\dd^3(\Jrk\p_t\ark^{\nu\alpha}\p_\nu v_{\alpha})-[\dd^3,\Jrk\sigma]\p_t^2 h+\dd^3(\Jrk \p_t\sigma\p_t h)+\p_{\nu}([\dd^3,\mathring{E}^{\nu\mu}]\p_{\mu}h).
\end{equation} Compared with \eqref{waveh3}, we only replace $e'(h)$, $\ak$ and $\tilde{J}$ by $\sigma$, $\ark$ and $\Jrk$, respectively. Using the same method and the a priori bound in Lemma \ref{apriorir}, one can get
\begin{equation}
\begin{aligned}
&~~~~\sum_{\dd^3}\io\tilde{J}e'(h)|\dd^3\p_t h|^2+ |\p\dd^3 h|^2\dy\bigg|_{t=T} \\
&\lesssim\PP_0+ \epsilon\|\TP^3h(T)\|_1^2+\sum_{\dd^3}\int_0^T P(\|\p^2\er\|_2,\|\p\er\|_{L^{\infty}},\|v\|_4, \|\p_t v\|_3,\|\p_t^2 v\|_3,\|\p h\|_{L^{\infty}},\|\p_t h\|_3,\|\p_t^2 h\|_1)\|\dd^3 \p h\|_0\dt,
\end{aligned}
\end{equation}where $\epsilon>0$ can be chosen sufficiently small such that $\epsilon\|\TP^3h(T)\|_1^2$ can be absorbed by LHS. 
Finally, the full Sobolev norm $\|\p_t^{4-k} h\|_k,~k=0,1,2,3$ and $\|h\|_{\hc}$ can be bounded by adapting the arguments in Section \ref{lessnormal}.

\bigskip

\paragraph*{Step 2: The div-curl estimates for $v$}

From \eqref{Ida}, \eqref{etart} and \eqref{etartt} in Lemma \ref{apriorir}, we know all the steps can be copied as in Section \ref{divcurl} after replace $\ak$ by $\ark$, $\eta$ by $\er$ and $e'(h)$ by $\sigma$. We omit the detailed computations and only list the results here. 

\begin{itemize}

\item $L^2$-estimates:
\begin{equation}\label{vtrl2}
\begin{aligned}
\|v(T)\|_0 &\leq \|v_0\|_0+\int_0^T \|\p_t v(t)\|_0\,dt,\\ 
\|\p_t v(T)\|_0&\lesssim\|\p_t v(0)\|_0+\int_0^T \|\p_t^2 v(t)\|_0\dt\lesssim\|\p_t v(0)\|_0+\int_0^T P(\|\er\|_{\hc}, \|v\|_3,\|\p h\|_{L^\infty},\|\p_t h\|_1)\dt,\\
\|\p_t^2 v(T)\|_0&\lesssim\|\p_t^2 v(0)\|_0 +\int_0^T P(\|\er\|_{\hc},\|\p_t v\|_1,\|\p h\|_{L^\infty},\|\p_t h\|_1, \|\p_t^2 h\|_1)\dt, \\
\|\p_t^3 v(T)\|_0&\lesssim\|\p_t^3 v(0)\|_0 +\int_0^T P(\|\er\|_{\hc},\|v\|_3,\|\p_t v\|_2,\|\p_t^2 v\|_1, \|h\|_{\hc},\|\p_t h\|_2, \|\p_t^2 h\|_1, \|\p_t^3 h\|_0)\dt.
\end{aligned}
\end{equation}

\item Boundary estimates:
\begin{align}
\label{vrtbdry} |\TP^2 (\p_tv\cdot N)|_{0.5}&\lesssim  \|\TP^3\p_t v\|_0+\|\TP^2 \dive \p_t v\|_0 \\
\label{vrttbdry} |\TP (\p_t^2v\cdot N)|_{0.5}&\lesssim  \|\TP^2\p_t^2 v\|_0+\|\TP \dive \p_t^2 v\|_0 \\
\label{vrtttbdry} |\p_t^3 v\cdot N|_{0.5}&\lesssim  \|\TP\p_t^3 v\|_0+\|\dive \p_t^3 v\|_0.
\end{align}

\item The div-curl estimates:
\begin{align}
\label{divr} \sum_{k=0}^3\|\dive \p_t^k v\|_{3-k}&\lesssim \PP_0+\int_0^T P(\EE^{(n+1)}(t))\dt.\\
\label{curlr} \sum_{k=0}^3\|\curl\p_t^{k} v\|_{3-k}&\lesssim\epsilon\sum_{k=0}^3\|\p_t^k v\|_{4-k}+\PP_0+\PP_0\int_0^TP(\EE^{(n+1)}(t)-W^{(n+1)}(t))\dt.
\end{align}
\end{itemize}

Combining with Hodge's decomposition inequality in Lemma \ref{hodge}, to estimate the full Sobolev norm of $\p_t^k v$, it suffices to control $\|\TP^k\p_t^{4-k}\|_0$.

\bigskip

\paragraph*{Step 3: Tangential estimates for time derivatives of $v$}

This part also follows in the same way as Section \ref{tgtime}. Let $\dd^4=\p_t^4,\TP\p_t^3,\TP^2\p_t^2,\TP^3\p_t$. One can directly compute $\frac{d}{dt}\frac{1}{2}\io |\dd^4 h|\dy$ and follow the same method in \eqref{tgt0}-\eqref{tgl1l2} to get the analogous conclusion as \eqref{tgt}:
\begin{equation}\label{tgtr}
\sum_{k=1}^4\frac{d}{dt}\left(\|\p_t^k\TP^{4-k}v\|_0^2+\|\sqrt{\sigma}\p_t^k \TP^{4-k} h\|_0^2\right)\lesssim\PP_0\cdot P(\EE^{(n+1)}(t)-W^{(n+1)}(t)).
\end{equation}

\bigskip

\paragraph*{Step 4: Tangential estimates for $v$}

In this step we still mimic the proof as in Section \ref{tgspace}. 
For a given function $g$, we use $\mathring{\GG}$ to denote its Alinhac's good unknown.  Then there holds 
\begin{equation}\label{alinhacar}
\tpl(\park g)=\park \mathring{\GG}+\mathring{C}(g),
\end{equation} 
where the error term $\mathring{C}(g)$ is defined in the same way as in \eqref{alinhaca} but replacing $\ak$ by $\ark$. In view of \eqref{alinhacc}, we have
\begin{equation}\label{alihanccr}
\|\mathring{C}(g)\|_0\lesssim P(\|\er\|_{\hc})\|g\|_{\hc}.
\end{equation}
Similar to \eqref{alinhac4}, one also has
\begin{equation}\label{alinhac4r}
\|\TP^4 g(T)\|_0\lesssim\|\mathring{\GG}(T)\|_0+\int_0^T P(\|\er\|_{\hc}, \|\mathring{v}\|_{4}, \|\p g\|_{L^\infty}, \|\p_t g\|_{L^\infty}).
\end{equation}

We introduce the Alinhac's good unknown $\VVr$ and $\HHr$ for $v$ and $h$:
\begin{align}
\label{vrgood} \VVr&:=\tpl v-\tpl\erk\cdot\park v, \\
\label{hrgood} \HHr&:=\tpl h-\tpl\erk\cdot\park h.
\end{align}
Applying $\tpl$ to the second equation in the linearization system \eqref{linearr}, one gets
\begin{equation}
\label{goodlinearr} \p_t\VVr=-\park \HHr+\underbrace{\p_t(\tpl \erk\cdot\park v)-\mathring{C}(h)}_{=:\mathring{\FF}},
\end{equation}subject to the boundary condition
\begin{equation}\label{bdryrgood}
\HHr=-\tpl \erk_{\beta}\ark^{3\beta}\p_3 h~~~\text{ on }\Gamma,
\end{equation}and the corresponding compressibility condition
\begin{equation}\label{divrgood}
\park\cdot\VVr=\tpl(\divr v)-\mathring{C}^{\alpha}(v_{\alpha}),~~~\text{in }\Omega.
\end{equation}

Now we take $L^2$ inner product between \eqref{goodlinearr} and $\VVr$ to get analogous result to \eqref{tgs1}.
\begin{equation}\label{tgs1r}
\frac{1}{2}\frac{d}{dt}\io |\VVr|^2\dy=-\io \park \HHr\cdot \VVr\dy+\io\mathring{\FF}\cdot\VVr\dy,
\end{equation}where $\|\mathring{\FF}\|_0$ can be directly controlled as in \eqref{tgs2}. As for the first term, we integrate by parts to get 
\begin{equation}\label{tgs2r}
-\io \park \HHr\cdot \VVr\dy=-\ig \ark^{3\alpha}\VVr_{\alpha} \HHr\dS+\io \HHr(\park\cdot\VVr)\dy+\io\p_\mu\ark^{\mu\alpha}\HHr\VVr_{\alpha}\dy,
\end{equation}where the second and the third term can be controlled in the same way as in \eqref{IKL0}-\eqref{tgK}.

For the boundary term in \eqref{tgs2r}, we no longer need to plug the precise form of $\psir$ into it and find the subtle cancellation as in Section \ref{tgspace} because the energy estimate is not required to be $\kk$-independent. Instead, we integrate $\TP^{0.5}$ by parts, apply Kato-Ponce inequality \eqref{product} and Sobolev embedding ${H}^{0.5}(\R^2)\hookrightarrow L^4(\R^2)$ to get
\begin{equation}\label{Ir}
\begin{aligned}
-\ig \ark^{3\alpha}\VVr_{\alpha} \HHr\dS&=\ig \p_3 h\tpl(\lkk^2\er_{\beta})\ark^{3\beta}\ark^{3\alpha}\VVr_{\alpha}\dS \\
&\lesssim\big(|\p_3 h\ark^{3\beta}\ark^{3\alpha}|_{L^{\infty}}|\tpl(\lkk^2\er_{\beta})|_{0.5}|+|\p_3 h\ark^{3\beta}\ark^{3\alpha}|_{W^{0.5,4}}|\tpl(\lkk^2\er_{\beta})|_{L^4}\big)|\VVr|_{\dot{H}^{-0.5}}\\
&\lesssim\|h\|_{\hc} (|\ark|_{L^{\infty}}+|\TP \ark|_{0.5}^2)\frac{1}{\kk}\|\TP^3 \eta\|_1(|\TP^3 v|_{\dot{H}^{0.5}}+|\TP^3\erk|_{\dot{H}^{0.5}}|\ark\p v|_{L^{\infty}}+|\TP^3\erk|_{L^4}|\ark\p v|_{\dot{W}^{0.5,4}}) \\
&\lesssim \frac{1}{\kk} P(\|\er\|_{\hc}, \|h\|_{\hc},\|v\|_4) .
\end{aligned}
\end{equation}

Combining \eqref{alinhac4r} with the estimates above, we have
\begin{equation}\label{tgsr}
\|\TP^4v(T)\|_0\lesssim_{\kk}\PP_0+\int_0^T P(\EE^{(n+1)}(t)-W^{(n+1)}(t))\dt.
\end{equation}
Summing up the estimates for $h$, div-curl estimates and tangential estimates, we get 
\begin{equation}\label{En4final}
\sum_{k=0}^4 \|\p_t^{4-k}v\|_k^2+\Big(\|h\|_{\hc}^2+\sum_{k=0}^3\|\p_t^{4-k} h\|_0^2\Big)\lesssim_{\kk} \PP_0+\int_0^T P(\EE^{(n+1)}(t)-W^{(n+1)}(t))\dt.
\end{equation}

\subsubsection{Estimates for $W^{(n+1)}$: 5-th order wave equation of $h$}\label{wave5}
We would like to control $$W^{(n+1)}=\sum_{k=0}^4\|\sqrt{\sigma}\p_t^{5-k}h\|_k^2+\sum_{k=0}^3\|\p_t^{4-k}\park h\|_k^2+\|\park h\|_{L^\infty}^2+\|\p \park h\|_{3}^2.$$ 
It suffices to control only the top order terms. We take $\divr$ in the second equation of \eqref{linearr} to get the wave equation for $h$:
\begin{equation}\label{hwave0r}
\sigma\p_t^2 h-\Delta_{\ark} h=-\p_t\ark^{\nu\alpha}\p_{\nu}v_{\alpha}-\p_t\sigma\p_t h.
\end{equation}
Before we derive the higher order wave equation, we would like to reduce the estimates of $W^{(n+1)}$ to that of $\|\sqrt{\sigma}\p_t^5 h\|_0^2+\|\p_t^4\park h\|_0^2$ via \eqref{hwave0r} and the elliptic estimate Lemma \ref{GLL}.

We start with $\|\p^4\park h\|_0$ and $\|\p^4\p_t h\|_0$. By the elliptic estimate Lemma \ref{GLL}, we have
\begin{equation}
\|\p^4\park h\|_0\lesssim C(\|\erk\|_{\hc})(\sum_{r\leq 3}\|\p^r \Delta_{\ark} h\|_0 +\|\TP\p\erk\|_3\|h\|_{\hc}),
\end{equation}
in which the term $\|\TP\p\erk\|_3\lesssim\kk^{-1}\|\p^2\er\|_2$ by the property of tangential smoothing. The term $\p^3 \Delta_{\ark} h$ can be expressed as follows by using \eqref{hwave0r}
\begin{equation}
\p^3 \Delta_{\ark} h=\p^3(\sigma\p_t^2 h)+\p^3(\p_t\ark^{\nu\alpha}\p_{\nu}v_{\alpha}+\p_t\sigma\p_t h),
\end{equation}which produces one more time derivative and thus reduce the control of $\p^4\park h$ to $\p^3\p_t^2 h$:
\begin{equation}
\|\p^3 \Delta_{\ark} h\|_0\leq\|\sigma\p_3\p_t^2 h\|_0+\|[\p^3,\sigma]\p_t^2 h\|_0+\|\p_t\ark^{\nu\alpha}\p_{\nu}v_{\alpha}\|_3+\|\p_t\sigma\p_t h\|_3.
\end{equation}

As for $\p^4\p_t h$, we note that for any $1\leq r\leq 4$, $$(\p^rf)_{\alpha}=\p^{r-1}\p_{\alpha}f=\p^{r-1}(\ark^{\mu\alpha}\p_\mu f)+\p^{r-1}((\delta^{\mu\alpha}-\ark^{\mu\alpha})\p_{\mu}f)$$ together with $\|\ark-1\|\leq\epsilon$ gives
\begin{equation}\label{trick}
\|\p^rf\|_0\lesssim\|\p^{r-1}\park f\|_0+\epsilon\|\p^r f\|_0, 
\end{equation} where the last term can be absorbed by LHS after choosing $\epsilon>0$ sufficently small.

Therefore we have
\begin{equation}
\p^3\p_{\alpha}\p_t h=\p^3(\ark^{\mu\alpha}\p_\mu\p_t h)+\p^3(\underbrace{(\delta^{\mu\alpha}-\ark^{\mu\alpha})}_{\|\cdot\|_3\leq\epsilon}\p_\mu\p_t h),
\end{equation}which gives
\begin{equation}
\|\p^4\p_t h\|_0\lesssim\|\p^3\park \p_t h\|_0+\epsilon\|\p^4\p_t h\|_0,
\end{equation} where the last term can be absorbed by LHS after choosing $\epsilon>0$ sufficiently small. So we are able to reduce the estimates for $\|\p^4\park h\|_0$ and $\|\p^4\p_t h\|_0$ to $\|\p^3\p_t^2 h\|_0$ and $\|\p^3\park \p_t h\|_0$, respectively, plus lower order terms. In other words,  we replace one spatial derivative by one time derivative via the elliptic estimate and wave equation \eqref{hwave0r}. 

Next, since $\p_t h|_{\Gamma}=0$, we apply the elliptic estimate in Lemma \eqref{GLL} to $\park\p_t h$ to get
\begin{equation}
\|\p^3\park\p_th\|_0\lesssim C(\|\erk\|_{\hc})(\sum_{r\leq 2}\|\p^r \Delta_{\ark}\p_t h\|_0 +\|\TP\p\erk\|_3\|\p_th\|_3).
\end{equation} The term $\p^r \Delta_{\ark} \p_t h$ can be re-expressed as follows by commuting $\p_t$ through \eqref{hwave0r}:
\begin{equation}
\p^2 \Delta_{\ark} \p_t h=\sigma\p^2\p_t^3 h+\p^2\p_t(\p_t\ark^{\nu\alpha}\p_{\nu}v_{\alpha}+\p_t\sigma\p_t h)+[\p^2\p_t,\sigma]\p_t^2 h-\p^2([\p_t,\Delta_{\ark}]h),
\end{equation} and thus the control of $\p^2 \Delta_{\ark} \p_t $ is reduced to $\sigma\p^2\p_t^3 h$ plus the other terms on the RHS of the last inequality
\begin{equation}
\|\p^2 \Delta_{\ark} \p_t h \|_0\lesssim\|\sigma\p^2\p_t^3 h\|_0+\|\p_t(\p_t\ark^{\nu\alpha}\p_{\nu}v_{\alpha}+\p_t\sigma\p_t h)\|_2+\|[\p^2\p_t,\sigma]\p_t^2 h\|_0+\|[\p_t,\Delta_{\ark}]h\|_2.
\end{equation} As for $\p^3\p_t^2 h$, we again rewrite one Lagrangian spatial derivative in terms of one Eulerian spatial derivative plus an error term:
\begin{equation}
\p^2\p_{\alpha}\p_t^2 h=\p^2(\ark^{\mu\alpha}\p_{\mu}\p_t^2 h)+\p^2((\delta^{\mu\alpha}-\ark^{\mu\alpha})\p_{\mu}\p_t^2 h),
\end{equation}which gives
\begin{equation}
\|\p^3\p_t^2 h\|_0\lesssim\|\p^2\park \p_t^2 h\|_0+\epsilon\|\p^3\p_t^2 h\|_0,
\end{equation} where the last term can be again absorbed by LHS after choosing $\epsilon>0$ sufficiently small.

The reduction mechanism above can be summarized as the following diagram
\begin{equation}
\begin{aligned}
\p^4\p_t h&\xrightarrow{\eqref{trick}}\p^3\park h\xrightarrow[\eqref{hwave0r}]{\text{Lem }\ref{GLL}}\p^2\p_t^3 h\xrightarrow{\eqref{trick}}\p \park\p_t^3 h\xrightarrow[\eqref{hwave0r}]{\text{Lem }\ref{GLL}}\p_t^5 h;\\
\p^4\park h&\xrightarrow[\eqref{hwave0r}]{\text{Lem }\ref{GLL}}\p^3\p_t^2 h\xrightarrow{\eqref{trick}}\p^2\park\p_t^2 h\xrightarrow[\eqref{hwave0r}]{\text{Lem }\ref{GLL}}\p\p_t^4 h\xrightarrow{\eqref{trick}}\park\p_t^4 h.
\end{aligned}
\end{equation}

As is shown above, we are able to replace one spatial derivative by one time derivative after using the elliptic estimate and wave equation \eqref{hwave0r}. Repeat the steps above, we can reduce the estimates of $W^{(n+1)}$ to $\|\p_t^5 h\|_0$ and $\|\p_t^4\park h\|_0$ which can be controlled via the 5-th order wave equation of $h$ (i.e., taking $\p_t^4$ in \eqref{hwave0r}) plus commutator terms. Specifically, 
\begin{align}
\label{WW} \sum_{k=1}^{4}\|\p_t^{5-k}\p^k h\|_0+\|\p_t^{4-k}\p^k\park h\|_0&\lesssim C(\|\p\er\|_{L^{\infty}},\|\TP^2\er\|_2)(\sigma+\sigma^2)(\|\p_t^5 h\|_0+\|\park\p_t^4 h\|_0)\\
\label{comm1} &+ \frac{1}{\kk}C(\|\p\er\|_{L^{\infty}},\|\p^2\er\|_2)(\sigma+\sigma^2)(\|h\|_0+\|\p_th\|_0+\cdots+\|\p_t^3 h\|_0) \\
\label{comm2} &+ \|[\p^3,\sigma]\p_t^2 h\|_0+\|\p_t\ark^{\nu\alpha}\p_{\nu}v_{\alpha}\|_3+\|\p_t\sigma\p_t h\|_3 \\
\label{comm3} &+ \|\p_t(\p_t\ark^{\nu\alpha}\p_{\nu}v_{\alpha}+\p_t\sigma\p_t h)\|_2+\|[\p^2\p_t,\sigma]\p_t^2 h\|_0+\|[\p_t,\Delta_{\ark}]h\|_2\\
\label{comm4} &+ \|\p_t^2(\p_t\ark^{\nu\alpha}\p_{\nu}v_{\alpha}+\p_t\sigma\p_t h)\|_1+\|[\p\p_t^2,\sigma]\p_t^2 h\|_0+\|[\p_t^2,\Delta_{\ark}]h\|_1 \\
\label{comm5} &+ \|\p_t^3(\p_t\ark^{\nu\alpha}\p_{\nu}v_{\alpha}+\p_t\sigma\p_t h)\|_0+\|[\p_t^3,\sigma]\p_t^2 h\|_0+\|[\p_t^3,\Delta_{\ark}]h\|_0\\
\label{comm6} &+ \sum_{k=1}^4\|\p^k([\p_t^{4-k},\ark^{\mu\alpha}]\p_{\mu}h)\|_0^2
\end{align}
Here,  all the commutator and error terms \eqref{comm1}-\eqref{comm6} consists of $\leq 4$ derivatives of $v,\eta,h$, and $\ark$ which have no problem to bound. Thus,
\begin{equation}\label{commw}
\eqref{comm1}+\cdots+\eqref{comm6}\lesssim \PP_0\left(\EE^{(n+1)}(t)-W^{(n+1)}(t)\right),
\end{equation}
where the RHS is controlled in \eqref{En4final}.

It remains to control $\|\sqrt{\sigma}\p_t^5 h\|_0+\|\p_t^4\park h\|_0$. We apply $\p_t^4$ to \eqref{hwave0r} to get:
\begin{equation}\label{hwave5r}
\sigma \p_t^6 h-\ark^{\nu\alpha}\p_\nu(\ark^{\mu}_{\alpha}\p_\mu\p_t^4 h)=\underbrace{-\p_t^4(\p_t\ark^{\nu\alpha}\p_{\nu}v_{\alpha})-\p_t^4(\p_t\sigma\p_t h)-[\p_t^4,\sigma]\p_t^2 h+[\p_t^4,\Delta_{\ark}] h}_{=:F_5}.
\end{equation}
Multiplying \eqref{hwave5r} by $\p_t^5 h$ and integrate over $\Omega$, we get
\begin{equation}\label{h51}
\io\sigma\p_t^5h\p_t^6h\dy-\io\p_t^5h\ark^{\nu\alpha}\p_\nu(\ark^{\mu}_{\alpha}\p_\mu\p_t^4 h)\dy=\underbrace{\io F_5\p_t^5 h\dy}_{LW_1}.
\end{equation}
The first term in \eqref{h51} is
\begin{equation}\label{h511}
\frac{1}{2}\frac{d}{dt}\io \sigma|\p_t^5h|^2\dy-\frac{1}{2}\io\p_t\sigma|\p_t^5 h|^2\dy.
\end{equation} 
For the second term in \eqref{h51}, we integrate $\p_\nu$ by parts and note that $\p_t^5 h|_{\Gamma}=0$ makes the boundary integral vanish.
\begin{equation}\label{h512}
\begin{aligned}
&~~~~-\io\p_t^5h\ark^{\nu\alpha}\p_\nu(\ark^{\mu}_{\alpha}\p_\mu\p_t^4 h)\dy \\
&=\io(\ark^{\nu\alpha}\p_\nu\p_t^5h)(\ark^{\mu}_{\alpha}\p_\mu\p_t^4 h)\dy+\underbrace{\io\p_t^5h \p_\nu\ark^{\nu\alpha} (\ark^{\mu}_{\alpha}\p_\mu\p_t^4 h)\dy}_{LW_2} \\
&=\frac{1}{2}\frac{d}{dt}\io |\park \p_t^4h|^2\dy+\underbrace{\io([\ark^{\nu\alpha},\p_t]\p_\nu\p_t^4 h)(\ark^{\mu}_{\alpha}\p_\mu\p_t^4 h)\dy}_{LW3}+LW_2.
\end{aligned}
\end{equation}
 Plugging \eqref{h511} and \eqref{h512} into \eqref{h51}, we have
\begin{equation}\label{h52}
\frac{d}{dt}\left(\io\sigma|\p_t^5 h|^2+|\park\p_t^4 h|^2\dy\right)=\frac{1}{2}\io\p_t\sigma|\p_t^5 h|^2\dy+LW_1+LW_2+LW_3.
\end{equation}

Now we come to estimate the RHS of \eqref{h52}: First, invoking \eqref{weightr}, we have

\begin{equation}\label{LW0}
\frac{1}{2}\io\p_t\sigma|\p_t^5 h|^2\dy\lesssim \|\p_t\sigma\|_{L^{\infty}}\|\p_t^5 h\|_0^2 \lesssim \PP_0\|\sqrt{\sigma}\p_t^5 h\|_0^2.
\end{equation}
Second, invoking \eqref{circile vari estimate}, we have
\begin{equation}\label{LW2}
LW_2\lesssim \PP_0\|\p_t^5h\|_0\|\park\p_t^4 h\|_0,
\end{equation}

\begin{equation}\label{LW31}
LW_3=-\io(\p_t\ark^{\nu\alpha})(\p_\nu\p_t^4 h)(\ark^{\mu}_{\alpha}\p_\mu\p_t^4 h)\dy\lesssim \PP_0\|\p\p_t^4 h\|_0\|\park \p_t^4 h\|_0.
\end{equation}
We can write $\p_{\alpha}\p_t^4 h=\ark^{\mu\alpha}\p_{\mu}\p_t^4 h+(\delta^{\mu\alpha}-\ark^{\mu\alpha})\p_{\mu}\p_t^4 h$ and invoke $|\ark-\text{Id}|\leq \epsilon$ to get $\|\p\p_t^4 h\|_0\lesssim \|\park \p_t^4 h\|_0$. In consequence, 
\begin{equation}\label{LW3}
LW_3\lesssim \PP_0\|\park \p_t^4 h\|_0^2.
\end{equation}

It remains to estimate $LW_1$, i.e., $\|F_5\|_0$. 

\begin{itemize}
\item $\|\p_t^4(\p_t\ark^{\nu\alpha}\p_{\nu}v_{\alpha})\|_0$: There are two terms containing 5 derivatives: $\p_t^5\ark^{\nu\alpha}\p_{\nu}v_{\alpha}$ and $\p_t\ark^{\nu\alpha}\p_t^4\p_{\nu}v_{\alpha}$. The rest terms are of $\leq 4$ derivatives and hence controlled. By \eqref{circile vari estimate} we know that $\|\p_t^5\ark\|_0\lesssim \PP_0$, which gives $\|\p_t^5\ark^{\nu\alpha}\p_{\nu}v_{\alpha}\|_0\lesssim\PP_0\|\p v\|_2.$ As for the second term, we invoke the second equation of \eqref{linearr} to get $\p_t^4\p v=-\p_t^3 \p(\park h+ge_3)=-\p_t^3 \p(\park h)$, so we have
\begin{equation}\label{LW11}
\|\p_t^4(\p_t\ark^{\nu\alpha}\p_{\nu}v_{\alpha})\|_0\lesssim \PP_0\left(\|\p v\|_2+\|\p_t^3 \p(\park h)\|_0\right).
\end{equation}

\item $\|\p_t^4(\p_t\sigma\p_t h)\|_0$: Expanding all the terms, and then use the previous estimates for $\leq 4$ derivative and invoking \eqref{circile vari estimate}, we have
\begin{equation}\label{LW12}
\begin{aligned}
&~~~~\|\p_t^4(\p_t\sigma\p_t h)\|_0\\
&\lesssim\|\p_t^5\sigma\|_0\|\p_t h\|_2+\|\p_t^4\sigma\|_0\|\p_t^2 h\|_2+\|\p_t^3\sigma \|_1\|\p_t^3 h\|_1+\|\p_t^2 \sigma\|_{L^{\infty}}\|\p_t^4h\|_0+\|\p_t\sigma\|_{L^{\infty}}\|\p_t^5 h\|_0  \\
&\lesssim \PP_0 (\|\p_t h \|_2+\|\p_t^2 h\|_2+\|\p_t^3 h\|_1 +\|\p_t^4 h\|_0+\|\p_t^5 h\|_0 ).
\end{aligned}
\end{equation}
Also, one can control $[\p_t^4,\sigma]\p_t^2 h$ in exactly the same way, so we omit the details.

\item $[\p_t^4,\Delta_{\ark}] h$: A direct computation gives
\begin{align*}
[\p_t^4,\Delta_{\ark}] h&=\p_t^4 (\ark^{\nu\alpha}\p_\nu(\ark^{\mu}_{\alpha}\p_\mu h))-\ark^{\nu\alpha}\p_\nu(\ark^{\mu}_{\alpha}\p_\mu\p_t^4 h) \\
&=\p_t^4 (\ark^{\nu\alpha}\p_\nu(\ark^{\mu}_{\alpha}\p_\mu h))-\ark^{\nu\alpha}\p_\nu\p_t^4 (\ark^{\mu}_{\alpha}\p_\mu h)+\ark^{\nu\alpha}\p_\nu\p_t^4 (\ark^{\mu}_{\alpha}\p_\mu h)-\ark^{\nu\alpha}\p_\nu(\ark^{\mu}_{\alpha}\p_\mu\p_t^4 h)\\
&=[\p_t^4,\ark^{\nu\alpha}]\p_{\nu}(\ark^{\mu}_{\alpha}\p_\mu h)+\ark^{\nu}_{\alpha}\p_\nu([\p_t^4,\ark^{\mu}_{\alpha}]\p_\mu h)\\
&=\sum_{l=1}^{4}(\p_t^{l}\ark^{\nu\alpha})(\p_t^{4-l}\p_{\nu}(\ark^{\mu}_{\alpha}\p_\mu h)) +\ark^{\nu}_{\alpha}\p_\nu\left((\p_t^{l}\ark^{\mu}_{\alpha})(\p_t^{4-l}\p_\mu h)\right).
\end{align*}
Therefore, 
\begin{equation}\label{LW13}
\begin{aligned}
\|[\p_t^4,\Delta_{\ark}] h\|_0&\lesssim \|\p_t^4 \ar\|_0\|\eta\|_{\hc}\|h\|_{\hc}+\|\p_t^3\ar\|_1\|\p_t\park h\|_2+\|\p_t^2 \ar\|_{2}\|\p_t^2\park h\|_1+\|\p_t \ar\|_1\|\p_t^3\park h\|_1 \\
&+\|\ar\|_{L^{\infty}}(\|\p\p_t^4 \ar\|_0\|\p h\|_{L^{\infty}}+\|\p\p_t^3\ar\|_0\|\p\p_t h\|_2+\|\p\p_t^2\ar\|_1\|\p\p_t^2h\|_1+\|\p\p_t\ar\|_2\|\p\p_t^3 h\|_0)\\
&\lesssim \PP_0 \cdot (P(\EE^{(n+1)}-W^{(n+1)})+\|\p_t^3\park h\|_1).
\end{aligned}
\end{equation}
\end{itemize}
Combining \eqref{LW11}-\eqref{LW13}, one has
\begin{equation}\label{LW1}
LW_1\lesssim \frac{1}{\sqrt{\sigma}}\PP_0\cdot P(\EE^{(n+1)})\|\p_t^5 h\|_0.
\end{equation}
Summing up \eqref{h52}, \eqref{LW0}, \eqref{LW2}, and \eqref{LW3}, we get the estimates for the wave equation \eqref{hwave5r}
\begin{equation}\label{h50}
\frac{d}{dt}\left(\io\sigma|\p_t^5 h|^2+|\park\p_t^4 h|^2\dy\right)\lesssim \PP_0\cdot P(\EE^{(n+1)}(t)).
\end{equation}
Therefore we finish the control of $W^{(n+1)}$ by \eqref{WW},\eqref{commw} and \eqref{h50}
\begin{equation}\label{Wfinal}
\frac{d}{dt}W^{(n+1)}(t)\lesssim \PP_0\cdot P(\EE^{(n+1)}(t)).
\end{equation}

\subsubsection{Uniform-in-$n$ a priori estimates for the linearized approximation system}

From \eqref{El}, \eqref{Wl}, \eqref{En4final} and \eqref{Wfinal}, we get
\begin{equation}\label{Enfinal}
\EE^{(n+1)}(T)\lesssim \EE^{(n+1)}(0)+\PP_0\int_0^T P(\EE^{(n+1)}(t))\dt,
\end{equation} which gives the uniform-in-$n$ a priori estimates 
\[
\sup_{0\leq t\leq T_\kk}\EE^{(n+1)}(t)\lesssim_\kk \PP_0
\]for the linearized approximation system \eqref{linearr} (also for \eqref{linearn}) with the help of Gronwall-type inequality in Tao \cite{tao2006nonlinear}.
\begin{flushright}
$\square$
\end{flushright}

\subsection{Construction of the solutions to the linearized approximation system} \label{fixed-point}

In this subsection we are going to construct the solutions to the linearized approximation system \eqref{linearr}:
\[
\begin{cases}
\p_t\eta=v+\psir~~~&\text{ in }\Omega; \\
\p_t v=-\park h-ge_3~~~&\text{ in }\Omega; \\
\divr v=-\sigma \p_t h~~~&\text{ in }\Omega; \\
h=0~~~&\text{ on }\Gamma; \\
(\eta,v, h)|_{t=0}=(\text{Id},v_0. h_0),
\end{cases}
\]
given that $\er,\ar,\psir,\sigma$ satisfying Lemma \ref{apriorir}. 

\subsubsection{Function space and Solution map}

\paragraph*{Definition} (Norm, Function space and Contraction) 

We define the norm $$\|\cdot\|_{Z^r}:=\sum_{s=0}^r\sum_{k+l=s}\|\p_t^k\p^{l}\cdot\|_0$$ and define the function space
\begin{equation}\label{XX}
\begin{aligned}
\X(M,T):=&
\bigg\{(\xi, w,\pi):(w, \xi)|_{t=0}=(v_0,\text{Id}),\\
\sup_{t\in [0,T]}&\left(\|w(t),\p_t\pi(t)\|_{Z^4}+\|\nab_{\ak^{(n)}} \pi(t)\|_{L^\infty}+\|\p\nab_{\ak^{(n)}} \pi(t), \p_t\nab_{\ak^{(n)}} \pi(t)\|_{Z^3}+\|\p_t\xi(t)\|_{Z^3}+\|\p^2\xi(t)\|_{Z^2}+\|\p\xi(t)\|_{L^{\infty}}\right)\leq M\bigg\}.
\end{aligned}
\end{equation} 
We notice here that for given $M>0,T>0$, $\X(M,T)$ is a Banach space. 
\begin{rmk}
As mentioned in the remark after \eqref{eq h}, the quantity $\|\nab_{\ak^{(n)}} \pi(t)\|_{L^\infty}+\|\p\nab_{\ak^{(n)}} \pi(t), \p_t\nab_{\ak^{(n)}} \pi(t)\|_{Z^3}$ can be replaced by $\|\nab_{\ak^{(n)}} \pi(t)\|_{Z^4}$ if $\Omega$ is bounded. 
\end{rmk}

We then define the solution map $\Xi:\X(M,T)\to\X(M,T)$ by
\begin{equation}
\begin{aligned}
\Xi:\X(M,T)&\to\X(M,T) \\
(w,\pi,\xi)&\mapsto (v,h,\eta).
\end{aligned}
\end{equation} The image $(v,h,\eta)$ is defined as follows:
\begin{enumerate}
\item Define $\eta$ by 
\begin{equation}\label{defeta}
\p_t \eta=w+\psir,~~~\eta(0)=\text{Id}.
\end{equation}

\item Define $v$ by 
\begin{equation}\label{defv}
\p_t v=-\park\pi-ge_3,~~~v(0)=v_0.
\end{equation}

\item Define $h$ by the solution of the following wave equation
\begin{equation}\label{hwavec0}
\begin{cases}
\sigma\p_t^2 h-\Delta_{\ark}h=-\p_t \ark^{\nu\alpha}\p_\nu v_{\alpha}-\p_t\sigma\p_t h~~~&\text{ in }\Omega,\\
h=0~~~&\text{ on }\Gamma,\\
(h,\p_t h)|_{t=0}=(h_0,h_1).
\end{cases}
\end{equation} 
The existence of this linear wave equation can be shown by adapting the method provided in Lax-Phillips \cite{Lax} after turning it into a system of  hyperbolic equations. Also, this solution lies in the space $\X(M,T)$ owing to \eqref{waveZ4} in the upcoming subsection. 
\end{enumerate}
 Here, we have to show that a solution for \eqref{defeta}-\eqref{hwavec0} with $(w,\pi,\xi)=(v,h,\eta)$ implies a solution for \eqref{linearr}. It suffices to show that we can recover the third equation of \eqref{linearr}. First, since $\pi=h$, \eqref{defv} reads
$
\p_t v = -\nab_{\ark}h - ge_3.
$
We take $\di_{\ark}$ on both sides and get 
\begin{align}
\p_t (\di_{\ark} v) + \Delta_{\ark} h = \p_t \ark^{\nu\alpha}\p_\nu v_\alpha. \label{defv div}
\end{align}
Moreover, \eqref{hwavec0} implies $\lap_{\ark} h = \p_t(\sigma \p_th)+\p_t \ark^{\nu\alpha} \p_\nu v_\alpha$, and by plugging this to \eqref{defv div} we get
\begin{align}
\p_t (\divr v+\sigma \p_th)=0
\end{align}
and hence $\divr v+\sigma \p_th = $ constant. This constant must be $0$ since $\divr v+\sigma \p_th|_{t=0}=0$. 
\subsubsection{Construct the solution: Contraction Mapping Theorem}
Now we need to verify
\begin{enumerate}
\item $\Xi$ is a self-mapping of $\X$, 

\item $\Xi$ is a contraction on $\X$.
\end{enumerate} Once these two properties are proved, we can apply the Contraction Mapping Theorem to $\Xi$ to get there exists a unique fixed point $(v,h,\eta)$ of $\Xi$ which solves the linearized system \eqref{linearr}.

\bigskip

First we verify $\Xi$ is a self-mapping of $\X$.

\paragraph*{Estimates for $\eta$:} A direct computation gives
\begin{align}
\label{etaZ1} \|\p\eta\|_{L^{\infty}}&\leq\|\p\eta(0)\|_{L^{\infty}}+\int_0^T\|\p(w+\psir)\|_{L^{\infty}}\dt\leq 1+\int_0^T\|w\|_{Z^4}+\|\p\psir\|_2\dt,\\
\label{etaZ2} \|\p^2\eta\|_{Z^2}&\leq\|\p^2\eta(0)\|_{Z^2}+\int_0^T\|\p^2(w+\psir)\|_{Z^2}\leq\int_0^T \|w\|_{Z^4}+\|\psir\|_{Z^4}\dt,\\
\label{etaZ3} \|\p^{3-k}\p_t^{k}\p_t\eta\|_0&\lesssim\|\p^{3-k}\p_t^{k}\p_t\eta(0)\|_0+\int_0^T\|\p^{3-k}\p_t^{k+1}(w+\psir)\|_0\lesssim\int_0^T\|w\|_{Z^4}+\|\psir\|_{Z^4}\dt.
\end{align}

\paragraph*{Estimates for $v$}: First we have for $l=1,2,3,4$:
\begin{align}
\|v\|_0&\leq\|v_0\|_0+\int_0^T\|\p_t v(0)\|_0+\left(\int_0^t\|\p_t^2 v(\tau)\|_0d\tau\right)\dt\leq\|v_0\|_0+T\|\p_t v(0)\|_0+T\int_0^T\|\p_t\park\pi\|_{0}\dt \\
\|\p_t^l v\|_0&\leq\|\p_t^l v(0)\|_0+\int_0^T\|\p_t^{l}\park\pi\|_0\dt\leq\|\p_t^l v(0)\|_0+\int_0^T\|\p_t\park\pi\|_{Z^3}\dt~~~1\leq l\leq 4,\\
\|\park h\|_{L^{\infty}}&\leq g+\|\p_t v\|_{L^{\infty}}.
\end{align}
For the space-time derivatives, we also have
\begin{align}
\|\p^4 v\|_0&\leq \|v_0\|_4+\int_0^T\|\p_t\p^4 v\|_0\dt\leq\|v_0\|_4+\int_0^T\|\p^4\park\pi\|_0\dt\leq \|v_0\|_4+\int_0^T\|\p\park\pi\|_{Z^3}\dt, \\
\|\p_t^l\p^{4-l}v\|_{0}&\leq\|\p_t^l\p^{4-l}v\|_0+\int_0^T\|\p_t^l\p^{4-l}\park\pi\|_0\dt\leq\|v(0)\|_{Z^4}+\int_0^T\|\p_t\park\pi\|_{Z^3}\dt.
\end{align}
Therefore,
\begin{equation}\label{vZ4}
\|v\|_{Z^4}\leq\|v_0\|_{Z^4}+T\|\p_t v(0)\|_0+\int_0^T\|\park \pi(t)\|_{L^\infty}+\|\p\park \pi(t), \p_t\park \pi(t)\|_{Z^3}\lesssim (1+T)\PP_0+TM.
\end{equation}

\paragraph*{Estimates for $h$:} It suffices to estimate $\|\p_t\park h\|_{Z^3}$ and $\|\p\park h\|_{Z^3}$ via the wave equation of $h$, i.e., \eqref{hwavec0}. Again we can apply the same method as in Section \ref{wave5} to derive
\begin{equation}\label{waveZ4}
\|\p_t\park h\|_{Z^3}+\|\p\park h\|_{Z^3}\lesssim_M \PP_0+\PP_0\int_0^T P(\|\p\eta\|_{L^\infty}, \|\p_t^2 \eta\|_{2}, \|v\|_{Z^4}, \|\park \pi(t)\|_{L^\infty},\|\p\park \pi(t), \p_t\park \pi(t)\|_{Z^3}, \|\p_t h\|_{Z^4})\dt.
\end{equation}
Combining \eqref{etaZ1}-\eqref{etaZ3}, \eqref{vZ4} and \eqref{waveZ4}, we obtain that the solution map $\Xi$ is a self-map of $\X$ after applying the Gronwall's inequality.

Next we prove $\Xi:\X(M,T)\to \X(M,T)$ is a contraction. Given $(w_1,\pi_1,\xi_1),~(w_2,\pi_2,\xi_2)\in \X(M,T)$ and their images under $\Xi$ $(v_1,h_1,\eta_1),~(v_2,h_2,\eta_2)$, we define 
\[
[w]:=w_1-w_2,~[\pi]:=\pi_1-\pi_2,~[\xi]:=\xi_1-\xi_2;~~[v]:=v_1-v_2,~[h]:=h_1-h_2,~[\eta]:=\eta_1-\eta_2.
\]

From \eqref{defeta}, \eqref{defv} and \eqref{hwavec0}, we can derive the equations for $([v],[h],[\eta])$ with initial data $(\mathbf{0},0,\mathbf{0})$:
\begin{align*}
\p_t[\eta]&=[w], \\
\p_t[v]&=\park[\pi], \\
\sigma\p_t^2[h]-\Delta_{\ark}[h]&=-\p_t\ark^{\nu\alpha}\p_\nu[v]_{\alpha}-\p_t\sigma \p_t[h],~~~[h]|_{\Gamma}=0.
\end{align*} Similarly as above we can derive the estimates

\begin{equation}\label{gapZ4}
\begin{aligned}
&~~~~\|\p[\eta]\|_{L^{\infty}}+\|\p^2[\eta]\|_{Z^2}+\|\p_t[\eta]\|_{Z^3}+\|\park [\pi]\|_{L^\infty},\|\p\park [\pi], \p_t\park [\pi]\|_{Z^3}+\|[v]\|_{Z^4} \\
& \lesssim_M \PP_0\int_0^T P(\|\p[\xi]\|_{L^{\infty}},\|\p^2[\xi]\|_{Z^2},\|\p_t[\xi]\|_{Z^3},\|\park [\pi]\|_{L^\infty},\|\p\park [\pi], \p_t\park [\pi]\|_{Z^3},\|[w]\|_{Z^4})\dt.
\end{aligned} 
\end{equation}
Therefore, choosing $T_{\kk}>0$ sufficiently small such that RHS of \eqref{gapZ4} is bounded by $$\frac{1}{2}\left(\|\p[\xi]\|_{L^{\infty}}+\|\p^2[\xi]\|_{Z^2}+\|\p_t[\xi]\|_{Z^3}+\|\park[\pi]\|_{Z^4}+\|\p_t[\pi]\|_{Z^4}+\|[w]\|_{Z^4}\right),$$ we prove that $\Xi:\X(M,T_\kk)\to \X(M,T_{\kk})$ is a contraction self-map. By the Contraction Mapping Theorem, we know $\Xi$ has a unique fixed point $(v,h,\eta)\in \X(M, T_\kk)$ which is the solution to the linearized approximation system \eqref{linearr}.

\subsection{Iteration and convergence of the solutions to the linearized system}

Up to now we have constructed a sequence of solutions $\{(v^{(n)},h^{(n)},\eta^{(n)})\}_{n=1}^{\infty}$ which solves the $n$-th linearized $\kk$-approximation system \eqref{linearn}. The last step in this section is to prove that $\{(v^{(n)},h^{(n)},\eta^{(n)})\}_{n=1}^{\infty}$ converges in some strong Sobolev norm, and thus produce a solution $(v,h,\eta)$ to the nonlinear $\kk$-approximation system \eqref{app1}.

Let $n\geq 3$, and define 
\begin{equation}\label{gapl1}
[v]^{(n)}:=v^{(n+1)}-v^{(n)},~~[h]^{(n)}:=h^{(n+1)}-h^{(n)},~~[\eta]^{(n)}:=\eta^{(n+1)}-\eta^{(n)}, 
\end{equation}and
\begin{equation}\label{gapl2}
[a]^{(n)}:=a^{(n)}-a^{(n-1)},~~[\psi]^{(n)}:=\psi^{(n)}-\psi^{(n-1)}. 
\end{equation} Then these quantities satisfy the following system with \textit{vanishing initial data and no gravity term}:
\begin{equation}\label{linearg}
\begin{cases}
\p_t[\eta]^{(n)}=[v]^{(n)}+[\psi]^{(n)}~~~&\text{ in }\Omega \\
\p_t[v]^{(n)}=-\nabla_{\ak^{(n)}}[h]^{(n)}-\nabla_{[\ak]^{(n)}} h^{(n)}~~~&\text{ in }\Omega \\
\text{div}_{\ak^{(n)}}[v]^{(n)}=-\text{div}_{[\ak]^{(n)}}v^{(n)}-e'(h^{(n)})\p_t [h]^{(n)}-(e'(h^{(n)})-e'(h^{(n-1)}))\p_t h^{(n)}~~~&\text{ in }\Omega \\
[h]^{(n)}=0 ~~~&\text{ on }\Gamma \\
\end{cases}
\end{equation}

We will prove the following energy converges to 0 as $n\to\infty$ for all $t\in[0,T]$
\begin{equation}\label{gapEn}
[\EE]^{(n)}(t):=\sum_{k=0}^3\|\p_t^{3-k}[v]^{(n)}(t)\|_k^2+\|\p_t^{3-k}[h]^{(n)}(t)\|_k^2+\|[\eta]^{(n)}(t)\|_3^2+\|[a]^{(n)}(t)\|_2^2.
\end{equation}
\begin{rmk}
Since the gravity term has been cancelled in \eqref{linearg}, we then could directly include the standard $H^3$ Sobolev norm of $[h]$ in $[\EE]$ instead of $\|\p [h]\|_{L^{\infty}}^2+\|\p^2[h]\|_1^2$.
\end{rmk}

\subsubsection{Estimates of $[a]$, $[\psi]$ and $[\eta]$}

By definition, we have
\begin{align*}
[a]^{(n)\mu\nu}(T)&=\int_0^T\p_t(a^{(n)\mu\nu}-a^{(n-1)\mu\nu})\dt\\
&=-\int_0^T [a]^{(n)\mu\gamma}\p_\beta\p_t\eta^{(n)}_{\gamma}a^{(n)\beta\nu}+a^{(n-1)\mu\gamma}\p_\beta\p_t[\eta]^{(n-1)}_{\gamma}a^{(n)\beta\nu}+a^{(n-1)\mu\gamma}\p_\beta\p_t\eta^{(n-1)}_{\gamma}[a]^{(n)\beta\nu},
\end{align*} which gives
\begin{equation}
\|[a]^{(n)}(T)\|_{2}\lesssim\PP_0\int_0^T\|[a]^{(n)}(t)\|_2^2\|\p_t[\eta]^{(n-1)}\|_3\dt\lesssim\PP_0\int_0^T\|[a]^{(n)}(t)\|_2^2(\|[v]^{(n-1)}\|_3+\|[\psi]^{(n-1)}\|_3))\dt.
\end{equation}
As for $[\psi]^{(n)}$, it satisfies $-\Delta[\psi]^{(n)}=0$ subject to the following boundary condition
\begin{align*}
[\psi]^{(n)}=\TP^{-1}\mathbb{P}\bigg(&\TL[\eta]^{(n-1)}_{\beta}\ak^{(n)i\beta}\TP_i\lkk^2v^{(n)}+\TP\eta^{(n-1)}_{\beta}[\ak]^{(n)i\beta}\TP_i\lkk^2v^{(n)}+\TP\eta^{(n-1)}_{\beta}\ak^{(n-1)i\beta}\TP_i\lkk^2[v]^{(n-1)} \\
&-\TL\lkk^2[\eta]^{(n-1)}_{\beta}\ak^{(n) i\beta}\TP_i v^{(n)}-\TL\lkk^2\eta^{(n-1)}_{\beta}[\ak]^{(n) i\beta}\TP_i v^{(n)}-\TL\lkk^2\eta^{(n-1)}_{\beta}\ak^{(n-1) i\beta}\TP_i [v]^{(n-1)}\bigg).
\end{align*}
By the standard elliptic estimates, we have the control for $[\psi]^{(n)}$
\begin{equation}\label{psig3}
\|[\psi]^{(n)}\|_3^2\lesssim |[\psi]^{(n)}|_{2.5}^2\lesssim \PP_0\left(\|[\eta]^{(n-1)}\|_3^2+\|[v]^{(n-1)}\|_2^2+\|[\ak]^{(n)}\|_1^2\right).
\end{equation}
Therefore, we obtain
\begin{equation}\label{ag2}
\sup_{[0,T]}\|[a]^{(n)}\|_2^2\lesssim \PP_0T^2\left(\|[a]^{(n)},[a]^{(n-1)}\|_{L_t^{\infty}H^2}+\|[v]^{(n-1)},[v]^{(n-2)},[\eta]^{(n-2)}\|_{L_t^{\infty}H^3}^2\right),
\end{equation} and the bound for $[\eta]$ combining with $\p_t [\eta]^{(n)}=[v]^{(n)}+[\psi]^{(n)}$:
\begin{equation}\label{etag3}
\sup_{[0,T]}\|[\eta]^{(n)}\|_3^2\lesssim\PP_0T^2\left(\|[a]^{(n)}\|_{L_t^{\infty}H^2}+\|[v]^{(n)},[v]^{(n-1)},[\eta]^{(n-1)}\|_{L_t^{\infty}H^3}^2\right)
\end{equation}

Similar as in Lemma \ref{etapsi} and Lemma \ref{apriorir}, one can get estimates for the time derivatives of $[\eta]$ and $[\psi]$
\begin{align}
\|[\p_t\psi]^{(n)}\|_3^2&\lesssim\PP_0\left(\|[a]^{(n)}\|_{2}^2+\|[\p_t v]^{(n-1)}\|_{2}^2+\|[v]^{(n-1)},[\eta]^{(n-1)}\|_{3}^2\right)\\
\|[\p_t^2\psi]^{(n)}\|_2^2&\lesssim\PP_0\left(\|[a]^{(n)}\|_{2}^2+\|[\p_t^2 v]^{(n-1)}\|_{1}^2+\|[\p_t v]^{(n-1)}\|_{2}^2+\|[v]^{(n-1)},[\eta]^{(n-1)}\|_{3}^2\right)\\
\|[\p_t\eta]^{(n)}\|_3^2&\lesssim\PP_0T^2\left(\|[a]^{(n)},[\p_tv]^{(n)},[\p_tv]^{(n-1)}\|_{L_t^{\infty}H^2}+\|[v]^{(n)},[v]^{(n-1)},[\eta]^{(n-1)}\|_{L_t^{\infty}H^3}^2\right)\\
\|[\p_t^2\eta]^{(n)}\|_2^2&\lesssim\PP_0T^2\left(\|[\p_t^2 v]^{(n),(n-1)}\|_{L_t^{\infty}H_1}^2+\|[a]^{(n)},[\p_tv]^{(n),(n-1)}\|_{L_t^{\infty}H^2}+\|[v]^{(n),(n-1)},[\eta]^{(n-1)}\|_{L_t^{\infty}H^3}^2\right)\\
\|[\p_t^3\eta]^{(n)}\|_1^2&\lesssim\PP_0\left(\|[\p_t^2 v]^{(n),(n-1)}\|_{L_t^{\infty}H_1}^2+\|[a]^{(n)},[\p_tv]^{(n),(n-1)}\|_{L_t^{\infty}H^2}+\|[v]^{(n),(n-1)},[\eta]^{(n-1)}\|_{L_t^{\infty}H^3}^2\right).
\end{align}

\subsubsection{Estimates of $[h]$}

Taking $\tilde{J}^{(n)}\text{div}_{\ak^{(n)}}$ in the second equation of \eqref{linearg}, we get an analogous wave equation for $[h]$:
\begin{equation}
\begin{aligned}
e'(h^{(n)})\tilde{J}^{(n)}\p_t^2 [h]^{(n)}-\p_{\nu}(\tilde{E}^{\nu\mu}\p_{\mu}[h]^{(n)})&=\tilde{J}^{(n)}\ak^{(n)\nu\alpha}\p_{\nu}({[\ak]^{(n)\mu}}_{\alpha}\p_\mu h^{(n)}) \\
&~~~~-\tilde{J}^{(n)}\p_t\left((e'(h^{(n)})-e'(h^{(n-1)}))\p_t h^{(n)}\right) \\
&~~~~-\tilde{J}^{(n)}(\p_t\ak^{(n)\nu\alpha})\p_\nu [v]^{(n)}_{\alpha},
\end{aligned}
\end{equation}where $\tilde{E}^{\nu\mu}:=\tilde{J}^{(n)}\ak^{(n)\nu\alpha}{\ak^{(n)\mu}}_{\alpha}$.

One can apply the similar method in Section \ref{hapriori} and use the estimates of $[\eta],[\psi]$ to obtain the following energy estimates
\begin{equation}\label{hg3}
\sum_{k=0}^2\|\p_t^k [h]^{(n)}\|_{3-k}^2+\|\sqrt{e'(h^{(n)})}\p_t^3[h]^{(n)}\|_0^2\lesssim\PP_0\int_0^T \left([\EE]^{(n)}(t)+[\EE]^{(n-1)}(t)\right)\dt.
\end{equation}

\subsubsection{The div-curl estimates}

From Hodge's decomposition inequality Lemma \ref{hodge}, we have
\begin{align*}
\|[v]^{(n)}\|_3^2&\lesssim \|[v]^{(n)}\|_0^2+\|\dive[v]^{(n)}\|_2^2+\|\curl[v]^{(n)}\|_2^2+|[v]^{(n)}\cdot N|_{2.5} \\
\|[\p_tv]^{(n)}\|_2^2&\lesssim \|[\p_tv]^{(n)}\|_0^2+\|\dive[\p_tv]^{(n)}\|_1^2+\|\curl[\p_tv]^{(n)}\|_1^2+|[\p_tv]^{(n)}\cdot N|_{1.5} \\
\|[\p_t^2v]^{(n)}\|_1^2&\lesssim \|[\p_t^2v]^{(n)}\|_0^2+\|\dive[\p_t^2v]^{(n)}\|_0^2+\|\curl[\p_t^2v]^{(n)}\|_0^2+|[\p_tv]^{(n)}\cdot N|_{0.5} \\
\end{align*}

The $L^2$-norm can be bounded in the same way as in Section \ref{divcurl} and the boundary term can be reduced to the tangential estimates for $[v]$ and its time derivative. As for the $\curl$part, we apply $\curl_{\ak^{(n)}}$ to the second equation of \eqref{linearg} to get the evolution equation of $\curl_{\ak^{(n)}[v]^{(n)}}$
\begin{align}
\label{curlevog} \p_t(\curl_{\ak^{(n)}[v]^{(n)}})_{\lambda}=\epsilon_{\lambda\mu\alpha}\p_t\ak^{(n)\nu\mu}\p_\nu[v]^{(n)}_{\alpha}-\epsilon_{\lambda\mu\alpha}\p_t[\ak]^{(n)\nu\mu}\p_\nu v^{(n)}_{\alpha}.
\end{align} Applying $D^2=\p^2,\p\p_t$ or $\p_t^2$ to \eqref{curlevog}, and mimicking the proof in Section \ref{divcurl},  one can get
\begin{equation}\label{curlg}
\begin{aligned}
\|\curl [v]^{(n)}\|_2^2&\lesssim\epsilon\|[v]^{(n)}\|_3^2+\PP_0 T^2\left(\|[v]^{(n),(n-1)},[\eta]^{(n-1)}\|_{L_t^{\infty}H^3}+\|[\ak]^{(n),(n-1)}\|_{L_t^{\infty}H^2}\right)\\
&\lesssim\epsilon\|[v]^{(n)}\|_3^2+\PP_0T^2\sup_{[0,T]}[\EE]^{(n),(n-1)}(t) \\
\|\curl [\p_t v]^{(n)}\|_1^2&\lesssim\epsilon\|[\p_tv]^{(n)}\|_2^2+\PP_0T^2\sup_{[0,T]}[\EE]^{(n),(n-1)}(t) \\
\|\curl [\p_t^2 v]^{(n)}\|_0^2&\lesssim\epsilon\|[\p_t^2v]^{(n)}\|_1^2+\PP_0T^2\sup_{[0,T]}[\EE]^{(n),(n-1)}(t) .
\end{aligned}
\end{equation} Similar results hold for $\dive$control by using the same method as in Section \ref{divcurl}, so we only list the result here
\begin{equation}\label{divg}
\begin{aligned}
&~~~~\|\dive[v]^{(n)}\|_2^2+\|\dive[\p_tv]^{(n)}\|_1^2\|+\dive[\p_t^2v]^{(n)}\|_0^2\\
&\lesssim \PP_0T^2\sup_{t\in[0,T]}\left([\EE]^{(n)}(t)+[\EE]^{(n-1)}(t)\right) .
\end{aligned}
\end{equation}

\subsubsection{Tangential estimates of $[\p_t^kv]$ for $k\geq 1$}

Let $\dd^3=\TP^2\p_t,\TP\p_t^2,\p_t^3$. Using the same method as in Section \ref{tgtime} and Section \ref{linear4} (Step 4), we can derive the estimates
\begin{equation}\label{tgtimeg}
\sum_{k=1}^3\|\TP^{3-k}\p_t^k [v]^{(n)}\|_{0}^2+\|\TP^{3-k}\p_t^k [h]^{(n)}\|_0^2\lesssim \PP_0 \int_0^T [\EE]^{(n)}(t)+[\EE]^{(n-1)}(t)+[\EE]^{(n-2)}(t)\dt.
\end{equation}

\subsubsection{Tangential estimates of $[v]$: Alinhac's good unknown}

We adopt the same method as in Section \ref{tgspace}. For each $n$, we define the Alinhac's good unknowns by 
\begin{equation}
\VV^{(n+1)}=\TP^3v^{(n+1)}-\TP^3\ek^{(n)}\cdot\nabla_{\ak^{(n)}}v^{(n+1)},~~\HH^{(n+1)}=\TP^3h^{(n+1)}-\TP^3\ek^{(n)}\cdot\nabla_{\ak^{(n)}}h^{(n+1)}.
\end{equation} Their difference is denoted by 
\begin{align*}
[\VV]^{(n)}:=\VV^{(n+1)}-\VV^{(n)}, [\HH]^{(n)}:=\HH^{(n+1)}-\HH^{(n)}.
\end{align*}

Similarly as in Section \ref{tgspace}, we can derive the analogous version of \eqref{goodapp1} as
\begin{equation}\label{goodconv}
\p_t[\VV]^{(n)}+\nabla_{\ak^{(n)}}[\HH]^{(n)}=-\nabla_{[\ak]^{(n)}}\HH^{(n)}+\FF^{(n)},
\end{equation}subject to the boundary data
\begin{equation}\label{bdryconv}
[\HH]^{(n)}|_{\Gamma}=-\left(\TP^3\ek^{(n)}_{\beta}\ak^{(n)3\beta}+\TP^3[\ek]^{(n-1)}_{\beta}\ak^{(n)3\beta}+\TP^3\ek^{(n-1)}_{\beta}[\ak]^{(n)3\beta}\right),
\end{equation}and the compressibility equation
\begin{equation}\label{divconv}
\nabla_{\ak^{(n)}}\cdot[\VV]^{(n)}=-\nabla_{[\ak]^{(n)}}\cdot\VV^{(n)}+\GG^{(n)},
\end{equation}where
\begin{align*}
\FF^{(n)\alpha}&=\p_t\left(\TP^3[\ek]^{(n-1)}_{\beta}\ak^{(n)\mu\beta}\p_{\mu}v^{(n+1)}_{\alpha}+\TP^3\ek^{(n-1)}_{\beta}[\ak]^{(n)\mu\beta}\p_{\mu}v^{(n+1)}_{\alpha}+\TP^3\ek^{(n-1)}_{\beta}\ak^{(n)\mu\beta}\p_{\mu}[v]^{(n)}_{\alpha}\right)\\
&~~~~+[\ak]^{(n)\mu\beta}\p_{\mu}(\ak^{(n)\gamma\alpha}\p_{\gamma}h^{(n+1)})\TP^3\ek^{(n)}_{\beta}+\ak^{(n-1)\mu\beta}\p_{\mu}([\ak]^{(n)\gamma\alpha}\p_{\gamma}h^{(n+1)})\TP^3\ek^{(n)}_{\beta} \\
&~~~~+\ak^{(n-1)\mu\beta}\p_{\mu}(\ak^{(n-1)\gamma\alpha}\p_{\gamma}[h]^{(n)})\TP^3\ek^{(n)}_{\beta}+\ak^{(n-1)\mu\beta}\p_{\mu}([\ak]^{(n)\gamma\alpha}\p_{\gamma}h^{(n)})\TP^3[\ek]^{(n-1)}_{\beta}\\
&~~~~-\left[\TP^2,[\ak]^{(n)\mu\beta}\ak^{(n)\gamma\alpha}\TP\right]\p_{\gamma}\ek^{(n)}_{\beta}\p_{\mu}h^{(n+1)}-\left[\TP^2,\ak^{(n-1)\mu\beta}[\ak]^{(n)\gamma\alpha}\TP\right]\p_{\gamma}\ek^{(n)}_{\beta}\p_{\mu}h^{(n+1)}\\
&~~~~-\left[\TP^2,\ak^{(n-1)\mu\beta}\ak^{(n-1)\gamma\alpha}\TP\right]\p_{\gamma}[\ek]^{(n-1)}_{\beta}\p_{\mu}h^{(n+1)}-\left[\TP^2,\ak^{(n-1)\mu\beta}\ak^{(n-1)\gamma\alpha}\TP\right]\p_{\gamma}\ek^{(n-1)}_{\beta}\p_{\mu}[h]^{(n)}\\
&~~~~-\left[\TP^3,[\ak]^{(n)\mu\alpha},\p_{\mu}h^{(n+1)}\right]-\left[\TP^3,\ak^{(n-1)\mu\alpha},\p_{\mu}[h]^{(n)}\right],
\end{align*}
and 
\begin{align*}
\GG^{(n)}&=\TP^3(\dive_{\ak^{(n)}}[v]^{(n)}-\dive_{[\ak]^{(n)}}v^{(n)}) \\
&~~~~-\left[\TP^2,[\ak]^{(n)\mu\beta}\ak^{(n)\gamma\alpha}\TP\right]\p_{\gamma}\ek^{(n)}_{\beta}\p_{\mu}v^{(n+1)}_{\alpha}-\left[\TP^2,\ak^{(n-1)\mu\beta}[\ak]^{(n)\gamma\alpha}\TP\right]\p_{\gamma}\ek^{(n)}_{\beta}\p_{\mu}v^{(n+1)}_{\alpha} \\
&~~~~-\left[\TP^2,\ak^{(n-1)\mu\beta}\ak^{(n-1)\gamma\alpha}\TP\right]\p_{\gamma}[\ek]^{(n-1)}_{\beta}\p_{\mu}v^{(n+1)}_{\alpha} -\left[\TP^2,\ak^{(n-1)\mu\beta}\ak^{(n)\gamma\alpha}\TP\right]\p_{\gamma}\ek^{(n-1)}_{\beta}\p_{\mu}[v]^{(n)}_{\alpha} \\
&~~~~-\left[\TP^3,[\ak]^{(n)\mu\alpha},\p_{\mu}v^{(n+1)}_{\alpha}\right]-\left[\TP^3,\ak^{(n-1)\mu\alpha},\p_{\mu}[v]^{(n)}_{\alpha}\right] \\
&~~~~+[\ak]^{(n)\mu\beta}\p_{\mu}(\ak^{(n)\gamma\alpha}\p_{\gamma}v^{(n+1)}_{\alpha})\TP^3\ek^{(n)}_{\beta}+\ak^{(n-1)\mu\beta}\p_{\mu}([\ak]^{(n)\gamma\alpha}\p_{\gamma}v^{(n+1)}_{\alpha})\TP^3\ek^{(n)}_{\beta} \\
&~~~~+\ak^{(n-1)\mu\beta}\p_{\mu}(\ak^{(n-1)\gamma\alpha}\p_{\gamma}[v]^{(n)}_{\alpha})\TP^3\ek^{(n)}_{\beta}+\ak^{(n-1)\mu\beta}\p_{\mu}([\ak]^{(n)\gamma\alpha}\p_{\gamma}v^{(n)}_{\alpha})\TP^3[\ek]^{(n-1)}_{\beta}.
\end{align*}

Multiplying $[\VV]^{(n)}$ in \eqref{goodconv} and integrate by parts in the $[\HH]$ term, we get
\begin{align*}
\frac{1}{2}\frac{d}{dt}\|\VV^{(n)}\|_0^2&=\io [\HH]^{(n)}\left(\nabla_{\ak^{(n)}}\cdot[\VV]^{(n)}-\p_{\mu}\ak^{\mu\alpha}[\VV]^{(n)}_{\alpha}\right)\dy+\io (\FF^{(n)}-\nabla_{[\ak]^{(n)}}\HH^{(n)})\cdot[\VV]^{(n)}\dy\\
&~~~~-\ig [\HH]^{(n)}\ak^{(n)3\alpha}[\VV]^{(n)}_{\alpha}\dS.
\end{align*}
Similarly as in \eqref{tgs2}-\eqref{tgK}, the first two integrals contribute to the energy term
\[
-\frac{1}{2}\frac{d}{dt}\|e'(h^{(n)})\TP^4[h]^{(n)}\|_0^2.
\]
modulo error terms that can be controlled by $\PP_0 ([\EE]^{(n)}+[\EE]^{(n-1)})$.
As for the boundary term, we can mimic the proof in \eqref{Ir}, i.e., integrate $\TP^{0.5}$ by parts, to get
\begin{align*}
&~~~~-\ig [\HH]^{(n)}\ak^{(n)3\alpha}[\VV]^{(n)}_{\alpha}\dS \\
&=\ig \p_3 h\ak^{(n)3\alpha}[\VV]^{(n)}_{\alpha}\left(\TP^3\ek^{(n)}_{\beta}\ak^{(n)3\beta}+\TP^3[\ek]^{(n-1)}_{\beta}\ak^{(n)3\beta}+\TP^3\ek^{(n-1)}_{\beta}[\ak]^{(n)3\beta}\right) \\
&\lesssim  |[\VV]^{(n)}|_{\dot{H}^{-0.5}}\left(\frac{1}{\kk}\PP_0|\TP^2[\eta]^{(n-1)}|_{\dot{H}^{0.5}}+\|[\ak]\|_2\right).
\end{align*}

Summing up all the estimates above and using the analogue of \eqref{alinhac4}, we get
\begin{equation}\label{tgsconv}
\|\TP^3[v]^{(n)}\|_0^2+\|e'(h^{(n)})\TP^3 [h]^{(n)}\|_0^2\lesssim \PP_0T^2 \sup_{t\in[0,T]}\left([\EE]^{(n)}(t)+[\EE]^{(n-1)}(t)+[\EE]^{(n-2)}(t)\right).
\end{equation}

Finally, we combine the estimates for $[\eta],[a],[h],[v]$ and div-curl estimates above and obtain
\[
[\EE]^{(n)}(t)\lesssim \PP_0T^2 \sup_{t\in[0,T]}\left([\EE]^{(n)}(t)+[\EE]^{(n-1)}(t)+[\EE]^{(n-2)}(t)\right).
\] Therefore by choosing $T=T_\kk>0$ sufficiently small (The time $T_\kk$ depends on $\kk>0$ because the uniform-in-$n$ estimates depend on $\kk^{-1}$), we can get
\begin{equation}\label{normconv}
\sup_{[0,T_\kk]}[\EE]^{(n)}(t)\leq\frac{1}{8}\left(\sup_{t\in[0,T]}[\EE]^{(n-1)}(t)+\sup_{t\in[0,T]}[\EE]^{(n-2)}(t)\right),
\end{equation}which implies
\[
\sup_{[0,T_\kk]}[\EE]^{(n)}(t)\leq\frac{1}{2^n}\PP_0\to 0~~~\text{as }n\to\infty.
\]

\subsection{Construction of the solution to the approximation system \eqref{app1}}

\begin{prop}\label{uniformnn}
Suppose the initial data $(v_0,h_0)$ satisfying $\|v_0\|_4 + \|h_0\|_{\hc}\leq M_0$ and the compatibility conditions up to order $4$. Given $\kk>0$, there exists a $T_\kk>0$ such that the nonlinear $\kk$-approximation system \eqref{app1} has a unique solution $(v(\kk),h(\kk),\eta(\kk))$ in $[0,T_\kk]$ satisfying the estimates
\begin{equation}\label{Ell}
\sup_{0\leq t\leq T_\kk}\EE(t)\lesssim C(M_0),
\end{equation}where
\begin{equation}\label{Wll}
\begin{aligned}
\EE(t)&:=\|\p^2\eta(t)\|_2^2+\|\p\eta(t)\|_{L^{\infty}}^2+\sum_{k=0}^4\|\p_t^{4-k}v\|_k^2+\left(\|h\|_{\hc}^2+\sum_{k=0}^3+\|\p_t^{4-k}h\|_k^2\right)+W(t),\\
W(t)&:=\sum_{k=0}^4\|\p_t^{5-k}h(t)\|_k^2+\sum_{k=0}^3\|\p_t^{4-k}\pak h(t)\|_k^2+\|\pak h\|_{L^{\infty}}^2+\|\p\pak h\|_{3}^2.
\end{aligned}
\end{equation}
\end{prop}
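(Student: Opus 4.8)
The plan is to realize the solution of the nonlinear $\kk$-approximate system \eqref{app1} as the strong limit of the iterates $\{(\eta^{(n)},v^{(n)},h^{(n)})\}_{n\geq 1}$ of the linearized systems \eqref{linearn} (equivalently \eqref{linearr}), assembling the three facts that have already been established: (i) for each $n$ the linearized system admits a unique solution in the Banach space $\X(M,T_\kk)$ of \eqref{XX}, obtained in Section~\ref{fixed-point} via the Contraction Mapping Theorem; (ii) the uniform-in-$n$ bound $\sup_{0\leq t\leq T_\kk}\EE^{(n+1)}(t)\lesssim_\kk\PP_0$ of Proposition~\ref{Enenergy}, which controls the full high-order energy of \eqref{En}--\eqref{Wn}; and (iii) the geometric decay $\sup_{[0,T_\kk]}[\EE]^{(n)}(t)\leq 2^{-n}\PP_0$ of \eqref{normconv} for the low-order difference energy \eqref{gapEn}. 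Each of (i)--(iii) requires $T_\kk$ small depending on $\kk$ and $M_0$, so one first fixes $\kk>0$, picks $M$ comparable to $P(M_0)$, and shrinks $T_\kk$ until they hold together; starting from the trivial iterate $(\mathrm{Id},0,0)$, which has the common data $(v_0,h_0)$ satisfying the $4$th-order compatibility conditions and hence finite initial energy $\EE^{(n+1)}(0)\lesssim\PP_0$, this produces the sequence and its bounds.

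By (iii) the sequence is Cauchy in the norm defining $[\EE]^{(n)}$ — which dominates $\|\p_t^{3-k}v^{(n)}\|_k$, $\|\p_t^{3-k}h^{(n)}\|_k$, $\|\eta^{(n)}\|_3$ and $\|a^{(n)}\|_2$ — hence converges strongly in these spaces to a triple $(\eta,v,h)$. Setting $a:=[\p\eta]^{-1}$, $\ek:=\lkk^2\eta$, $\ak:=[\p\ek]^{-1}$ and $\psi=\psi(\eta,v)$ the harmonic function of \eqref{psi}, one obtains $a^{(n)}\to a$, $\ak^{(n)}\to\ak$ and $\psi^{(n)}\to\psi$ from the continuity of $\eta\mapsto[\p\eta]^{-1}$, of the tangential mollifier, and of the map $(\eta,v)\mapsto\psi$ (the last via Lemma~\ref{harmonictrace}). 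I would then pass to the limit $n\to\infty$ in each equation of \eqref{linearn} — the transport equations $\p_t\eta^{(n+1)}=v^{(n+1)}+\psi^{(n)}$ and $\p_t v^{(n+1)}=-\nabla_{\ak^{(n)}}h^{(n+1)}-ge_3$, the compressibility relation $\mathrm{div}_{\ak^{(n)}}v^{(n+1)}=-e'(h^{(n)})\p_t h^{(n+1)}$, and the boundary condition $h^{(n+1)}|_\Gamma=0$ — recovering exactly \eqref{app1} for $(\eta,v,h)$ with the correct initial data.

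To upgrade the regularity of the limit to the full energy \eqref{Wll}, I would invoke (ii): by Banach--Alaoglu a subsequence converges weakly-$*$ in the high-order spaces of \eqref{En}--\eqref{Wn}; the weak-$*$ limit must agree with the strong limit $(\eta,v,h)$, and weak-$*$ lower semicontinuity of the Sobolev norms yields $\sup_{0\leq t\leq T_\kk}\EE(t)\lesssim_\kk\PP_0\lesssim C(M_0)$, the passage from $\nabla_{\ak^{(n)}}$ to $\pak$ in $W(t)$ being harmless since $\ak^{(n)}\to\ak$. Uniqueness follows by the same device as in the convergence step: the difference of two solutions of \eqref{app1} with identical data solves a linear system of the form \eqref{linearg} but with a common coefficient matrix and no forcing from coefficient differences, and a Gronwall estimate on the associated difference energy forces it to vanish on $[0,T_\kk]$. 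Finally, once a solution of \eqref{app1} is in hand, the $\kk$-independent bound $\sup_{0\leq t\leq T}\EE_\kk(t)\lesssim\PP_0$ on a fixed interval $[0,T]$ supplied by Proposition~\ref{uniformkk} also applies, which is the estimate used to let $\kk\to 0$ in the next section.

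The hard part is the gap between the topology in which the iterates converge and the one in which we need the solution: the contraction scheme is one derivative lower, and omits the $\kk$-weighted fifth-order wave energy $W$, so one cannot differentiate the top-order identities directly in the limit. This is bridged precisely by combining the strong low-order convergence of (iii) with the uniform high-order bound (ii) and weak-$*$ lower semicontinuity, together with checking that $\ak^{(n)}\to\ak$ strongly enough that the nonlinear wave equation \eqref{hwavec0} and its elliptic reductions from Section~\ref{wave5} survive the passage to the limit.
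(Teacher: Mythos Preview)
Your proposal is correct and follows essentially the same approach as the paper: construct the iterates via the fixed-point argument of Section~\ref{fixed-point}, use the uniform-in-$n$ bound \eqref{Enfinal} together with the geometric decay \eqref{normconv} to obtain a strong limit that solves \eqref{app1}, and read off the energy bound from the uniform estimate. The paper's proof is much terser and does not spell out the Banach--Alaoglu/weak-$*$ lower semicontinuity step you use to upgrade the regularity of the limit, but that is the standard way to make its phrase ``direct consequence of the uniform-in-$n$ estimate'' rigorous.
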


\begin{proof}
In Section \ref{fixed-point}, we proved that the linearized system \eqref{linearn} admits a solution $(\eta^{(n+1)}, v^{(n+1)},h^{(n+1)})$ assuming that $$(\eta^{(k)}, v^{(k)},h^{(k)}), \,\,\, k\leq n$$ are known and satisfying \eqref{Eninduction}. 
Moreover, in light of \eqref{Enfinal} and \eqref{normconv}, we obtain the strong convergence of the sequence of approximation solutions $\{(\eta^{(n+1)}, v^{(n+1)},h^{(n+1)})\}$ as $n\to +\infty$. The limit $(\eta(\kk), v(\kk),h(\kk))$ solves the nonlinear $\kk$-approximation system \eqref{app1} and the energy estimate \eqref{Ell} is a direct consequence of the uniform-in-$n$ estimate \eqref{Enfinal}. 
\end{proof}

\section{Local well-posedness of the compressible gravity water wave system}\label{lwp}

From Proposition \ref{uniformnn}, given $\kk>0$, we have constructed a solution $(v(\kk),h(\kk),\eta(\kk))$ to the nonlinear $\kk$-approximation system \eqref{app1}. Proposition \ref{uniformkk} gives a $\kk$-independent estimate \eqref{Ekk0} on some time interval $[0,T_0]$, which yields a strong convergence to a limit $(v,h,\eta)$ for every $t\in[0,T_0]$. This limit $(v,h,\eta)$ is a solution to the compressible gravity water wave system \eqref{wwl} with energy estimate \eqref{EEEE} in Theorem \ref{MAIN} if we set $\kk\to 0+$ in \eqref{app1}. Therefore, the existence has been proved.

Let $(v^1,h^1,\eta^1),(v^2,h^2,\eta^2)$ be two solutions to the compressible gravity water wave system \eqref{wwl} with the initial data $(v_0, h_0)$ and $(\hat{v}_0, \hat{h}_0)$, respectively.  Denoting their difference by $([v],[h],[\eta]):=(v^1-v^2,h^1-h^2,\eta^1-\eta^2)$ and $a^i:=(\p\eta^i)^{-1}$ with $[a]:=a^2-a^1$, then $([v],[h],[\eta])$ solves the following system:

\begin{equation}\label{gap}
\begin{cases}
\p_t[\eta]=[v]~~~&\text{ in }\Omega, \\
\p_t[v]=-\nabla_{a^1}[h]+\nabla_{[a]}h^2~~~&\text{ in }\Omega, \\
\dive_{a^1}[v]=\dive_{[a]}v^2-e'(h^2)\p_t[h]-(e'(h^1)-e'(h^2))\p_t h^2~~~&\text{ in }\Omega \\
[h]=0~~~&\text{ on }\Gamma,\\
([\eta], [v], [h])|_{t=0} = (\mathbf{0}, v_0-\hat{v}_0, h_0-\hat{h}_0). 
\end{cases}
\end{equation} 
We define the energy functional of \eqref{gap} by 
\begin{equation}\label{Egap}
[\EE]=\|[\eta]\|_2^2+\sum_{k=0}^2\|\p_t^{2-k}[v]\|_k^2+\|\p_t^{2-k}[h]\|_k^2+|(a^{1})^{3\alpha}\TP^2[\eta]_{\alpha}|_0^2.
\end{equation}
This looks very similar to \eqref{linearg}. The only essential difference is the boundary term $$\ig [\HH](a^{1})^{3\alpha} [\VV]_{\alpha}\dS,$$ where we define the Alinhac's good unknowns 
\[
\VV^i=\TP^2v^i-\TP^2\eta^i\cdot\nabla_{a^i} v^i,~~\HH^i=\TP^2h^i-\TP^2\eta^i\cdot\nabla_{a^i} h^i, 
\]and
\[
[\VV]:=\VV^1-\VV^2,~~[\HH]:=\HH^1-\HH^2.
\]
 The boundary terms then becomes
\begin{align*}
\ig [\HH](a^{1})^{3\alpha} [\VV]_{\alpha}&=-\ig\p_3[h]\TP^2\eta^2_{\beta}(a^2)^{3\beta}(a^2)^{3\alpha}[\VV]_{\alpha}\dS-\ig \p_3 h^1(\TP^2[\eta]_{\beta}(a^1)^{3\beta}+\TP^2\eta^2_{\beta}[a]^{3\beta})(a^1)^{3\alpha}[\VV]_{\alpha}\dS \\
&\lesssim -\frac{1}{2}\frac{d}{dt}\ig \p_3 h^1|(a^{1})^{3\alpha}\TP^2[\eta]_{\alpha}|_0^2\dS\\
&~~~~-\ig\p_3 h^1(a^1)^{3\gamma}\TP^2[\eta]_{\gamma}(\TP^2\eta^2_{\beta}[a]^{\mu\beta}\p_\mu v^1_{\alpha}-\TP^2\eta^2_{\beta}(a^2)^{\mu\beta}\p_\mu[v]_{\alpha})(a^1)^{3\alpha}\dS \\
&~~~~-\ig \p_3 h^1(\TP^2[\eta]_{\beta}(a^1)^{3\beta}+\TP^2\eta^2_{\beta}[a]^{3\beta})(a^1)^{3\alpha}[\VV]_{\alpha}\dS\\
&\lesssim-\frac{c_0}{2}\frac{d}{dt}\ig |(a^{1})^{3\alpha}\TP^2[\eta]_{\alpha}|_0^2\dS+C(M_0)P([\EE](t)),
\end{align*}
where $M_0$ is the constant defined in Theorem \ref{MAIN}.
Here in the second step we use the precise formula of $[\VV]$, and in the third step we apply the physical sign condition for $h^1$. Therefore we have
\[
\sup_{t\in[0,T_0]}[\EE](t)\leq P(\|v_0-\hat{v}_0\|_2, \|h_0-\hat{h}_0\|_2)+\int_0^{T_0}C(M_0)P([\EE](t))\dt,
\] 
which implies \eqref{EEEE2}. Also, when $v_0=\hat{v}_0$ and $h_0=\hat{h}_0$, we know $[\EE](t)=0$ for all $t\in[0,T_0]$ which gives the uniqueness of the solution to the compressible gravity water wave system \eqref{wwl}.

\bigskip

{\small

}
\end{document}